\theoremstyle{plain} 
\newtheorem{thm}{Theorem} 
\newtheorem{lem}[thm]{Lemma} 
\newtheorem{cor}[thm]{Corollary}
\newtheorem*{lemun}{Lemma}
\theoremstyle{definition} 
\newtheorem*{defn}{Definition} 
\newtheorem*{ex}{Example}
\theoremstyle{remark} 
\newtheorem*{rem}{Remark} 
\newtheorem*{que}{Question}
\def\DD{\mathbb D} 
\def\CC{\mathbb C} 
\def\RR{\mathbb R} 
\def\TT{\mathbb T} 
\def\NN{\mathbb N} 
\def\QQ{\mathbb Q}
\def\veps{\varepsilon}
\begin{document} 

\title{Compact composition operators with non-linear symbols on the $H^2$ space of Dirichlet series} 
\date{\today} 

\author{Fr\'{e}d\'{e}ric Bayart} \address{Clermont Universit\'{e}, Université Blaise Pascal,
Laboratoire de Mathématiques,
UMR 6620 - CNRS,
Campus des Cézeaux,
3, place Vasarely,
TSA 60026,
CS 60026,
63178 Aubière cedex} \email{bayart@math.univ-bpclermont.fr}

\author{Ole Fredrik Brevig} \address{Department of Mathematical Sciences, Norwegian University of Science and Technology (NTNU), NO-7491 Trondheim, Norway} \email{ole.brevig@math.ntnu.no}

\thanks{The second author is supported by Grant 227768 of the Research Council of Norway.}

\subjclass[2010]{Primary 47B33. Secondary 30B50, 30H10.}
\begin{abstract}
	We investigate the compactness of composition operators on the Hardy space of Dirichlet series induced by a map $\varphi(s)=c_0s+\varphi_0(s)$, where $\varphi_0$ is a Dirichlet polynomial. Our results depend heavily on the characteristic $c_0$ of $\varphi$ and, when $c_0=0$, on both the degree of $\varphi_0$ and its local behaviour near a boundary point. We also study the approximation numbers for some of these operators. Our methods involve geometric estimates of Carleson measures and tools from differential geometry. 
\end{abstract}

\maketitle

\section{Introduction} A theorem of Gordon and Hedenmalm \cite{GH99} describes the bounded composition operators on the Hilbert space $\mathcal{H}^2$ of Dirichlet series,
\[f(s) = \sum_{n=1}^\infty a_n n^{-s},\]
with square summable coefficients endowed with the norm $\|f\|_{\mathcal H^2}^2:=\sum_{n=1}^\infty|a_n|^2$. We let $\mathbb{C}_\theta$ denote the half-plane of complex numbers $s = \sigma+it$ with $\sigma>\theta$. The Dirichlet series in $\mathcal{H}^2$ represent analytic functions in $\mathbb{C}_{1/2}$ and a mapping $\varphi$ of $\mathbb{C}_{1/2}$ into itself defines a function $\mathcal C_\varphi(f):=f\circ\varphi$ on $\CC_{1/2}$, if $f \in \mathcal{H}^2$. The operator $\mathcal{C}_\varphi\,\colon\,\mathcal{H}^2\to\mathcal{H}^2$ is well-defined and bounded if and only if $\varphi$ is a member of the following class: 
\begin{defn}
	The \emph{Gordon--Hedenmalm class}, denoted $\mathcal{G}$, is the set of functions $\varphi\colon\mathbb{C}_{1/2}\to\mathbb{C}_{1/2}$ of the form 
	\begin{equation}
		\label{eq:compclass} \varphi(s) = c_0 s + \sum_{n=1}^\infty c_n n^{-s} =: c_0s + \varphi_0(s), 
	\end{equation}
	where $c_0$ is a non-negative integer called the \emph{characteristic} of $\varphi$, the Dirichlet series $\varphi_0$ converges uniformly in $\mathbb{C}_\varepsilon$ $(\varepsilon>0)$ and has the following mapping properties:
	
	{\normalfont(a)} If $c_0=0$, then $\varphi_0(\mathbb{C}_0)\subset \mathbb{C}_{1/2}$.
	
	{\normalfont(b)} If $c_0\geq1$, then either $\varphi_0\equiv 0$ or $\varphi_0(\mathbb{C}_0)\subset \mathbb{C}_0$. 
\end{defn}

Since the paper of Gordon and Hedenmalm, several authors have studied the properties of composition operators acting on $\mathcal H^2$ or on similar spaces of Dirichlet series (see for instance \cite{BAYMONAT,Ba03,FQ04,FQV04,QS14}). In the present work, we are interested in the study of the compactness of $\mathcal C_\varphi$ when $\varphi$ is a polynomial symbol, say 
\begin{equation}
	\label{eq:diripoly} \varphi(s) = c_0s + c_1 + \sum_{n=2}^N c_n n^{-s}, 
\end{equation}
and we implicitly assume that $\varphi \in \mathcal{G}$. The symbol $\varphi$ is said to have \emph{unrestricted range} if
\[\inf_{s \in \mathbb{C}_0} \mathrm{Re}\left(\varphi(s)\right) = 
\begin{cases}
	1/2 & \text{ if } c_0 = 0, \\
	0 & \text{ if } c_0 \geq 1. 
\end{cases}
\]
Correspondingly, if $\varphi(\mathbb{C}_0)$ is strictly contained in any smaller half-plane, we say that $\mathcal{C}_\varphi$ has \emph{restricted range}. It is well-known that the composition operator $\mathcal C_{\varphi}$ is compact when $\varphi$ has restricted range \cite[Thm.~21]{BAYMONAT}. In what follows, we will assume that $\varphi$ has unrestricted range.
\begin{defn}
	A set of integers $\Lambda \subseteq \mathbb{N}-\{1\}$ is called $\mathbb{Q}$\emph{-independent} if the set $\left\{\log{n}\,:\, n \in \Lambda\right\}$ is linearly independent over $\mathbb{Q}$. 
\end{defn}

Symbols of the form \eqref{eq:diripoly} have been extensively studied in the \emph{linear case}, 
\begin{equation}
	\label{eq:linearcase} \varphi(s) = c_0s + c_1 + \sum_{j=1}^d c_{q_j} q_j^{-s}, 
\end{equation}
where the set $\{q_j\}$ is $\mathbb{Q}$-independent and $c_{q_j}\neq0$. When $c_0 \geq 1$, it is proven in \cite{Ba03} that the operator $\mathcal C_{\varphi}$ is compact if and only if $\varphi$ has restricted range. Our first result extends this to the case of an arbitrary polynomial: 
\begin{thm}
	\label{thm:c0} Let $\varphi$ be a Dirichlet polynomial of the form \eqref{eq:diripoly} with $c_0 \geq 1$. Then $\mathcal C_{\varphi}$ is compact if and only if $\varphi$ has restricted range. 
\end{thm}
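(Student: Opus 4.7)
The ``if'' direction is cited from \cite[Thm.~21]{BAYMONAT}, so I would only prove the converse: assuming $\varphi \in \mathcal{G}$ with $c_0 \ge 1$ has unrestricted range, I aim to produce a weakly null sequence witnessing non-compactness. First, I would localize the critical boundary behavior. Since $\varphi_0(\mathbb{C}_0) \subset \mathbb{C}_0$, both summands in $\mathrm{Re}\,\varphi(s) = c_0\,\mathrm{Re}\,s + \mathrm{Re}\,\varphi_0(s)$ are non-negative on $\mathbb{C}_0$. The hypothesis $\inf_{\mathbb{C}_0}\mathrm{Re}\,\varphi = 0$ then yields a sequence $s_k = \sigma_k + i\tau_k \in \mathbb{C}_0$ with $\sigma_k \to 0^+$ and $\mathrm{Re}\,\varphi_0(s_k) \to 0$. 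The Dirichlet polynomial $\varphi_0$ has a Bohr lift to a trigonometric polynomial $P$ on some finite-dimensional torus $\mathbb{T}^J$, and by passing to a subsequence one may assume that $(p_j^{-i\tau_k})_j$ converges to a character $\chi_0 \in \mathbb{T}^J$ with $\mathrm{Re}\,P(\chi_0) = 0$. Since vertical translation is unitary on $\mathcal{H}^2$ and conjugates $\mathcal{C}_\varphi$ to the composition operator with a translated symbol, one may further absorb $\chi_0$ into the symbol and assume the critical character is trivial.

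Next, I would construct an explicit test sequence. The key algebraic identity is
\[
n^{-\varphi(s)} = (n^{c_0})^{-s}\cdot n^{-\varphi_0(s)},
\]
which factors the action of $\mathcal{C}_\varphi$ on a Dirichlet monomial as an injective shift $n \mapsto n^{c_0}$ of the Dirichlet index composed with a multiplicative factor $n^{-\varphi_0(s)}$. The shift is isometric into $\mathcal{H}^2$, so if one chooses $f_k = \sum_{n \in A_k} a_n n^{-s}$ with $A_k$ a suitable finite set of large integers whose Bohr images cluster around $\chi_0$, the images $\mathcal{C}_\varphi f_k$ will be supported on pairwise disjoint Dirichlet spectra and the norm can be estimated from below by summing the norms of the individual $n^{-\varphi_0(s)}$. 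A careful choice of $A_k$ (for example a coprime-to-a-distinguished-prime family tending to infinity) and of the coefficients $a_n$ (chosen to mimic a peaking kernel on $\mathbb{T}^J$ near $\chi_0$) is intended to yield $\|f_k\| = 1$, $f_k \rightharpoonup 0$, and $\|\mathcal{C}_\varphi f_k\| \ge \delta > 0$, contradicting compactness.

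The main obstacle is this test-function construction. A purely geometric test via normalized reproducing kernels $K_w/\|K_w\|$ is insufficient: when $c_0 \ge 2$ the image $\varphi(\mathbb{C}_{1/2})$ sits inside $\mathbb{C}_{c_0/2} \subset \mathbb{C}_1$, where $\|K_{\varphi(w)}\|$ stays uniformly bounded, so the kernel test cannot detect non-compactness. Similarly, one computes $\|\mathcal{C}_\varphi n^{-s}\|_{\mathcal{H}^2}^2 \sim n^{2(\mu - \mathrm{Re}\,c_1)}/\sqrt{\log n} \to 0$ under the unrestricted hypothesis, so individual Dirichlet monomials are also useless as witnesses. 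The non-compactness must therefore be extracted from the full arithmetic/Bohr structure of $\mathcal{H}^2$ — specifically the almost-periodic behavior of $\varphi_0$ along vertical lines of $\mathbb{C}_0$, which no test localized to the boundary of $\mathbb{C}_{1/2}$ alone can see, and which explains why the proof relies essentially on $c_0 \ge 1$ (the shift $n \mapsto n^{c_0}$) rather than on the more delicate local geometry needed in the $c_0 = 0$ regime addressed later in the paper.
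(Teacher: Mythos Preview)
Your proposal is not a proof but a plan, and the plan has a genuine gap: the test-function construction is never carried out. You correctly observe that neither normalized reproducing kernels nor single monomials $n^{-s}$ witness non-compactness when $c_0\ge 1$, and you propose to build peaking functions $f_k=\sum_{n\in A_k}a_n n^{-s}$ whose Bohr images cluster near a critical character. But you do not specify $A_k$ or $a_n$, and the heuristic ``the images $\mathcal{C}_\varphi f_k$ will be supported on pairwise disjoint Dirichlet spectra'' is false as stated: for a fixed $k$ the Dirichlet series $n^{-\varphi_0(s)}$ for different $n\in A_k$ overlap heavily (they all involve the same primes appearing in $\varphi_0$), so the norm of $\mathcal{C}_\varphi f_k$ is not simply $\sum_{n\in A_k}|a_n|^2\|n^{-\varphi_0}\|^2$. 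Making this approach work would require a delicate orthogonality or almost-orthogonality argument that you have not supplied.

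The paper sidesteps all of this by using a pointwise compactness criterion rather than a functional one. By \cite[Thm.~3]{Ba03}, compactness of $\mathcal{C}_\varphi$ forces $\mathrm{Re}\,\varphi(s)/\mathrm{Re}\,s\to+\infty$ as $\mathrm{Re}\,s\to 0$. The paper then only needs to produce \emph{points} $s_k'\in\mathbb{C}_0$ with $\mathrm{Re}\,s_k'\to 0$ and $\mathrm{Re}\,\varphi(s_k')=\mathcal{O}(\mathrm{Re}\,s_k')$. Starting from a sequence $s_k=\sigma_k+it_k$ with $\mathrm{Re}\,\varphi(s_k)\to 0$, one extracts a subsequence along which each $\cos(t_k\log n)$ and $\sin(t_k\log n)$ converges, deduces a linear relation among the limits, and then \emph{replaces} $\sigma_k$ by a much slower sequence $\sigma_k'\to 0$ chosen so that the residual oscillatory terms $F_n(t_k)$ are dominated by $\sigma_k'$. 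The freedom to decouple the real part $\sigma_k'$ from the imaginary part $t_k$ is exactly what makes $\mathrm{Re}\,\varphi(s_k')=\mathcal{O}(\sigma_k')$, contradicting the criterion. No test functions, no Bohr-lift peaking, no orthogonality issues.
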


As is to be expected when investigating composition operators on $\mathcal{H}^2$, the symbols with $c_0=0$ are more difficult to handle and require different techniques. In this case, it is proven independently in \cite{Ba03} and \cite{FQV04} that composition operators induced by linear symbols \eqref{eq:linearcase} with $c_0=0$ are compact if and only if $\varphi$ has restricted range or $d\geq2$.

The main effort of this paper is dedicated to extending this result to general polynomials. We rely crucially on a geometric description of such compact composition operators found in \cite{QS14} (see Lemma~\ref{lem:QS} below). Our second result is: 
\begin{thm}
	\label{thm:polynomials} Suppose that $\{q_j\}_{j=1}^d$ are $\mathbb{Q}$-independent and that
	\[\varphi(s) = \sum_{j=1}^d P_j(q_j^{-s})\]
	is in $\mathcal{G}$, and that the polynomials $P_j$ are non-constant. Then $\mathcal C_\varphi$ is compact if and only if $\varphi$ has restricted range or $d\geq2$. 
\end{thm}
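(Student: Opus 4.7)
Since the restricted-range case is classical (\cite[Thm.~21]{BAYMONAT}), we assume $\varphi$ has unrestricted range and must prove (a) non-compactness when $d=1$ and (b) compactness when $d\geq 2$. Both directions are based on Lemma~\ref{lem:QS}, which characterises compactness of $\mathcal{C}_\varphi$ through a vanishing-Carleson condition on the pullback measure $\mu_\varphi$ of the boundary values of $\varphi$. Since $\{q_j\}$ is $\mathbb{Q}$-independent, the orbit $t\mapsto(q_1^{-it},\ldots,q_d^{-it})$ is dense and equidistributed in $\mathbb{T}^d$ by Kronecker--Weyl, so the asymptotic geometry of $\mu_\varphi$ near the critical line $\mathrm{Re}(s)=1/2$ is governed by the polynomial $F(w) := \sum_{j=1}^d P_j(w_j)$ on $\overline{\mathbb{D}^d}$. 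Unrestricted range means $\min_{\overline{\mathbb{D}^d}}\mathrm{Re}\,F = 1/2$, attained at finitely many points $w^\ast=(w_1^\ast,\ldots,w_d^\ast)\in\mathbb{T}^d$ with $\mathrm{Re}\,P_j(w_j^\ast)=m_j$, $\sum_j m_j = 1/2$, and $\mathrm{Re}\,P_j(e^{i\theta}) - m_j \asymp (\theta-\theta_j^\ast)^{2\ell_j}$ for some integer $\ell_j\geq 1$.

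The core estimate concerns the $\mathbb{T}^d$-volume of the Bohr preimage of a Carleson window $S_h\subset\mathbb{C}_{1/2}$ of height $h$ centred at an imaginary value $\tau_0$ that corresponds via the Bohr lift to a minimum $w^\ast$. Because $\mathrm{Re}\,P_j(w_j)\geq m_j$ throughout $\overline{\mathbb{D}}$, the real-part inequality $\mathrm{Re}\,F(w)<1/2+h$ forces $\mathrm{Re}\,P_j(w_j)<m_j+h$ for every $j$, giving containment in a product set whose $j$-th factor has $\mathbb{T}$-measure $O(h^{1/(2\ell_j)})$. The imaginary constraint $|\mathrm{Im}\,F(w)-\tau_0|<h$ then imposes one additional tangential slicing of this product, contributing a factor of approximately $h^{1-\min_j 1/(2\ell_j)}$, since the range of $\mathrm{Im}\,F = \sum_j\mathrm{Im}\,P_j$ across the real sub-level product is dominated by the broadest single-variable excursion. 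The resulting bound is $\mu_\varphi(S_h) = O\bigl(h^{1+\sum_{j}1/(2\ell_j)-\min_{j}1/(2\ell_j)}\bigr)$.

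For (b) with $d\geq 2$, the exponent strictly exceeds $1$ because $\sum_j 1/(2\ell_j) > \min_j 1/(2\ell_j)$ whenever the sum contains at least two positive terms; hence $\mu_\varphi(S_h)=o(h)$ and Lemma~\ref{lem:QS} yields compactness. For (a) with $d=1$ the sum collapses to the minimum and the exponent equals exactly $1$, so $\mu_\varphi(S_h)\asymp h$: this is consistent with $\mathcal{C}_\varphi$ being bounded but precisely violates vanishing Carleson, so $\mathcal{C}_\varphi$ is not compact. The principal technical obstacle is making this pullback estimate rigorous and uniform: quantifying the tangency orders $\ell_j$ together with the speed of $\mathrm{Im}\,F$ at minima $w^\ast$ that may themselves be critical points of some $P_j$ (where the linearisation of the imaginary part degenerates), handling all finitely many $w^\ast$ simultaneously, and converting the polydisc-side product estimate into a bona fide Carleson bound on $\mathbb{C}_{1/2}$ via equidistribution. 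The differential-geometric tools announced in the abstract presumably systematise this tangency analysis at the boundary minima of $\mathrm{Re}\,F$ and are what make the product estimate sharp.
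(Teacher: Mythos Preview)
Your outline follows the same route as the paper: reduce via Lemma~\ref{lem:QS} to a local measure estimate on $\mathbb{T}^d$ at each boundary minimum, use the product structure so that the real-part constraint confines each $\theta_j$ to an interval of length $\asymp h^{1/(2\ell_j)}$, and then use the imaginary part to slice one coordinate down to length $O(h)$. Your final exponent $1+\sum_j 1/(2\ell_j)-\min_j 1/(2\ell_j)$ is exactly the paper's $\kappa_w$.

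The gap you flag yourself is the only real one, and it is decisive: the slicing step presupposes that $\mathrm{Im}\,P_j$ is \emph{never} critical at a boundary minimum $w_j^\ast$, i.e.\ that the linearisation of the imaginary part never degenerates. If it did, the imaginary constraint would only confine the sliced variable to a set of measure $O(h^{1/m})$ for some $m\geq 2$, and your exponent could drop back to~$1$. The paper closes this with a one-variable Julia--Carath\'eodory statement (Lemma~\ref{lem:nonneglin}): if $\Psi:\mathbb{D}\to\overline{\mathbb{C}_0}$ is analytic, $\Psi\not\equiv 0$, and $\Psi(1)=0$, then the coefficient of $(1-z)$ in its expansion at $1$ is strictly positive. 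Applied to $z\mapsto P_j(w_j^\ast z)-P_j(w_j^\ast)$ (which maps $\mathbb{D}$ into $\overline{\mathbb{C}_0}$ because, by the product structure, each $\mathrm{Re}\,P_j$ attains its global minimum $m_j$ at $w_j^\ast$), this gives $a_1^{(j)}>0$ for every~$j$. Hence $\mathrm{Im}\,\phi(\theta)=-\sum_j a_1^{(j)}\theta_j+o(|\theta|)$, and Lemma~\ref{LEMVOLUME1} with $p=1$ makes the slicing rigorous.

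Two minor points. You need not claim finitely many minima: a local estimate at every boundary point plus compactness of $\mathbb{T}^d$ suffices, which is how Lemma~\ref{lem:compactTd} is set up. And the ``equidistribution conversion'' you list as an obstacle is precisely what Lemma~\ref{lem:QS} already packages, so no separate argument is needed there.
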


Theorem~\ref{thm:polynomials} is truly a non-linear extension of the results for linear symbols, however it fails to handle the relatively simple cases 
\begin{equation}
	\label{eq:simplemixedcase} \varphi_1(s) = \frac 92 - 2^{-s} - 3^{-s} - 2\cdot 6^{-s}\qquad\text{ and }\qquad\varphi_2(s)=\frac{13}2-4\cdot 2^{-s}-4\cdot 3^{-s}+2\cdot 6^{-s}, 
\end{equation}
where ``mixed terms'' are present. However, the compactness of the associated operators can be decided by our main result. Before this result can be stated, we need to introduce some additional definitions.
\begin{defn}
	Let $\Lambda \subseteq \mathbb{N}-\{1\}$. We let the \emph{complex dimension} of $\Lambda$, denoted $\mathcal{D}(\Lambda)$, be the infimum of $\mathrm{card}(\Lambda_0)$ where $\Lambda_0\subset\NN-\{1\}$ is $\mathbb{Q}$-independent and multiplicatively generates $\Lambda$. 
\end{defn}

At this point, we should mention that the set $\Lambda_0$ attaining such an infimum is not necessarily unique. This is easily seen by considering $\Lambda = \left\{2^2\cdot3^2,\,2^4\cdot3^2,\,2^2\cdot3^4,\,2^4\cdot3^4\right\}$, where $\Lambda_0$ can be chosen as any of the following sets:
\[\left\{2,\,3\right\}\qquad\left\{2^2,\,3\right\}\qquad\left\{2,\,3^2\right\}\qquad\left\{2^2,\,3^2\right\}\qquad\left\{2^2\cdot3,\,3\right\}\qquad\left\{2,\,2\cdot 3^2\right\}\]
Now, we will rewrite \eqref{eq:diripoly} as
\begin{equation}\label{eq:dirpoly2}
\varphi(s) = c_1 + \sum_{n \in \Lambda} c_n n^{-s}
\end{equation}
with $c_n\neq 0$ for every $n\in \Lambda$. We pick some $\Lambda_0 = \{q_1,\,q_2,\,\ldots,\,q_d\}$ where $d = \mathcal{D}(\Lambda)$. Since $\Lambda_0$ generates $\Lambda$, any $n \in \Lambda$ can be written uniquely as a product of elements in $\Lambda_0$,
\[n = \prod_{j=1}^d q_j^{\alpha_j}.\]
This associates to $n$ the $d$-dimensional multi-index $\alpha(n)$. Clearly, $\alpha(n)$ depends on the choice of $\Lambda_0$ as the example considered above illustrates.
\begin{defn}
	The \emph{degree of} $\varphi$ \emph{with respect to} $\Lambda_0$ is defined by
	\[\deg(\varphi,\Lambda_0)=\sup\big\{|\alpha(n)|=\alpha_1+\alpha_2+\cdots+\alpha_d \,:\, n \in \Lambda\big\}.\]
	Among the different $\Lambda_0$ which generate $\Lambda$ and with $\mathrm{card}(\Lambda_0)=\mathcal D(\Lambda)$, we choose an optimal $\Lambda_0$ in the sense that it minimizes $\deg(\varphi,\Lambda_0)$. The \emph{degree} of $\varphi$ is then equal to the value of $\deg(\varphi,\Lambda_0)$ where $\Lambda_0$ is optimal in the previous sense. 
\end{defn}

It is clear that there can be more than one optimal $\Lambda_0$, as the example considered above again demonstrates, where the three final possibilities all have $\deg(\varphi,\Lambda_0)=4$ if $\varphi$ is given by \eqref{eq:dirpoly2}.
\begin{rem}
	For maps of the form \eqref{eq:linearcase} as considered before, the complex dimension is equal to $d$ and the degree is equal to 1, which justifies our terminology ``linear case''. 
\end{rem}

The study of the Hardy space of Dirichlet series $\mathcal H^2$ is intimately related to function theory on polydiscs. In our concerns, the main tool will be the so-called Bohr lift. Indeed, consider an optimal $\Lambda_0$ and use the substitution $q_j^{-s} \mapsto z_j$. To simplify the expressions in what follows, we will also subtract 1/2. Hence we obtain a polynomial in $d$ variables with the same degree as $\varphi$, 
\begin{equation}
	\label{eq:bohrlift} \Phi(z) =\left( c_1 -\frac 12\right)+ \sum_{n \in \Lambda} c_n z^{\alpha(n)}. 
\end{equation}
The polynomial $\Phi$ will be called an \emph{optimal Bohr lift} of $\varphi$. Using Kronecker's theorem (see for instance \cite[Ch.~13]{hardywright}), the $\QQ$-independence of $\Lambda_0$ implies that $\Phi$ maps $\mathbb{D}^d$ into $\mathbb{C}_0$. The polynomial $\Phi$ induces a map, denoted by $\phi$, on $\mathbb{R}^d$ defined by
\[\phi(\theta_1,\,\theta_2,\,\ldots,\,\theta_d)=\Phi\big(e^{i\theta_1},\,e^{i\theta_2},\,\ldots,\,e^{i\theta_d}\big).\]
\begin{rem}
	We will sometimes need to define the Bohr lift when the map $\varphi(s)=\sum_{n\geq1} c_n n^{-s}$ is not a Dirichlet polynomial. It is then defined as $$\Phi(z)=\left(c_1-\frac 12\right)+\sum_{n\geq2} c_n z^{\alpha(n)}$$ where we use the substitution $p_j^{-s}\mapsto z_j$. If we assume that $\varphi \in \mathcal{G}$, its Bohr lift $\Phi$ is now well-defined on $\DD^\infty\cap c_0$, and Kronecker's theorem shows that this set is mapped by $\Phi$ into $\mathbb{C}_0$. 
\end{rem}

Let us come back to a polynomial $\varphi\in\mathcal G$. If we assume that $\varphi$ has unrestricted range, there exists at least one point $w\in\mathbb{T}^d$ so that $\mathrm{Re}\,\Phi(w)=0$, by the compactness of $\TT^d$. Let $w=\big(e^{i\vartheta_1},\,e^{i\vartheta_2},\,\ldots,\,e^{i\vartheta_d}\big)$. Then $\vartheta=(\vartheta_1,\,\vartheta_2,\,\ldots,\,\vartheta_d)$ has to be a critical point of $\mathrm{Re}\,\phi$ since this last map admits a minimum at $\vartheta$. Moreover, the mapping properties of $\varphi$ implies that the Hessian matrix of $\mathrm{Re}\,\phi$ at $\vartheta$ should be non-negative. 
\begin{defn}
	We define the \emph{boundary index of} $\Phi$ \emph{at} $w$ as the non-negative integer $J(\Phi,w)$ such that the signature of the Hessian matrix of $\mathrm{Re}\,\phi$ at $\vartheta$ is equal to $\big(J(\Phi,w),0\big)$. 
\end{defn}

With these definitions at hand, we are able to state our main theorem which shows that, when there are mixed terms, the complex dimension does not give enough information and that we need a more careful study of $\varphi$.
\begin{thm}
	\label{thm:main} Let $\varphi(s)=c_1+\sum_{n\geq 2}c_n n^{-s}$ be a Dirichlet polynomial in $\mathcal G$ with unrestricted range. Suppose that its complex dimension $d$ is greater than or equal to $2$, and let $\Phi$ be a minimal Bohr lift of $\varphi$. Assume that 
	\begin{itemize}
		\item either the degree of $\varphi$ is equal to 1 or 2, 
		\item or the degree of $\varphi$ is at least $3$ and for any $w\in\TT^d$, either $\mathrm{Re}\, \Phi(w)>0$ or $\mathrm{Re}\,\Phi(w)=0$ and $J(\Phi,w)\geq 2$. 
	\end{itemize}
	Then $\mathcal{C}_{\varphi}$ is compact on $\mathcal H^2$. Moreover, the result is optimal in the following sense: 
	\begin{itemize}
		\item If the complex dimension of $\varphi$ is equal to $1$, then $\mathcal{C}_{\varphi}$ is never compact. 
		\item There exist polynomials $\varphi\in\mathcal{G}$ of arbitrary complex dimension and of arbitrary degree greater than or equal to 3 such that $\mathcal C_{\varphi}$ is not compact. 
	\end{itemize}
\end{thm}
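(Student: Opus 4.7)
The approach is to invoke Lemma~\ref{lem:QS}, which provides a Carleson-type vanishing criterion: $\mathcal C_\varphi$ will be compact on $\mathcal H^2$ if and only if the pushforward measure $\mu_\Phi := \phi_\ast m_d$ of normalised Haar measure on $\mathbb T^d$ satisfies a uniform smallness condition in boxes $Q_\sigma(t) = (0,\sigma)\times(t-\sigma,t+\sigma) \subset \overline{\mathbb C_0}$ as $\sigma \to 0$. Since $\mathrm{Re}\,\phi$ is smooth on the compact torus and $\varphi$ has unrestricted range, the boundary zero set $\mathcal Z := \{w \in \mathbb T^d : \mathrm{Re}\,\Phi(w) = 0\}$ is non-empty and compact, and at each $w = e^{i\vartheta} \in \mathcal Z$ the Hessian $H(w)$ of $\mathrm{Re}\,\phi$ at $\vartheta$ is positive semidefinite of rank $J(\Phi,w)$. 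A partition-of-unity argument reduces the problem to a local estimate of $\mu_\Phi$ in a small neighbourhood of each $w \in \mathcal Z$.

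For the sufficiency statements I fix such a $w$ and choose coordinates $\eta = (\xi,\zeta) \in \mathbb R^{J} \times \mathbb R^{d-J}$ diagonalising the positive part of $H(w)$. Non-negativity of $\mathrm{Re}\,\phi$ at the minimum $\vartheta$ rules out any non-trivial odd-order Taylor term in the pure-$\zeta$ variables and yields
\[
\mathrm{Re}\,\phi(\vartheta+(\xi,\zeta)) \geq c|\xi|^2 + Q(\zeta) + O\bigl(|\xi|^3 + |\xi|\,|\zeta|^2\bigr),
\]
where $Q$ is a positive semidefinite form of even degree $2k \geq 4$. Generically $\nabla(\mathrm{Im}\,\phi)(\vartheta) \neq 0$, so intersecting with a level set of $\mathrm{Im}\,\phi$ of width $\sigma$ consumes one coordinate and the Carleson mass reduces to a $(d{-}1)$-dimensional sublevel-volume estimate for $\mathrm{Re}\,\phi$. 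When $J(\Phi,w) \geq 2$ --- automatic for degree $1$, where $H$ has full rank $d \geq 2$, and imposed by hypothesis for degree $\geq 3$ --- one positive-definite direction remains after losing one, producing a sublevel volume $\lesssim \sigma^{1/2}$ and hence $\mu_\Phi(Q_\sigma(t))/\sigma = O(\sigma^{1/2}) \to 0$. In the residual degree-$2$ case with $J(\Phi,w) = 1$ (illustrated by $\varphi_2$ of \eqref{eq:simplemixedcase}), the degree bound together with positivity forces each degenerate direction to exhibit quartic-or-higher vanishing of $\mathrm{Re}\,\phi$; since $d \geq 2$, the sublevel volume along that direction is $\lesssim \sigma^{1/(2k)}$ with $k \geq 2$, and again the Carleson quantity vanishes.

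The optimality statements are addressed as follows. If $d = 1$, then $\Phi$ is a polynomial in one variable parametrising an analytic curve tangent to $i\mathbb R$ with finite-order contact at each boundary zero $w$; a direct arc-length computation shows that $\mu_\Phi\bigl(Q_\sigma(\mathrm{Im}\,\Phi(w))\bigr)/\sigma$ stays bounded away from $0$ as $\sigma \to 0$, so $\mathcal C_\varphi$ cannot be compact. For the last statement, I would construct, for each prescribed complex dimension $d \geq 2$ and each $D \geq 3$, an explicit polynomial $\varphi \in \mathcal G$ of optimal complex dimension $d$ and degree $D$ whose Bohr lift has a single boundary zero $w$ with $J(\Phi,w) = 1$ and degenerate-direction contact of order $2D$ with $i\mathbb R$; natural candidates are built from powers of linear combinations of the generators $q_j^{-s}$. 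When $D$ is sufficiently large the sublevel volume along the degenerate direction overwhelms the Carleson normalisation, and $\mu_\Phi(Q_\sigma)/\sigma$ does not tend to zero.

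The main obstacle is the degree-$2$ case with $J(\Phi,w) = 1$: one must show that Hessian degeneracy really does translate to quartic-or-higher vanishing of $\mathrm{Re}\,\phi$ along the degenerate directions (using the degree bound on $\Phi$ as a structural constraint), that the cross term $O(|\xi|\,|\zeta|^2)$ can be absorbed without spoiling the anisotropic sublevel-volume estimate, and that the analysis survives when $\nabla(\mathrm{Im}\,\phi)(\vartheta)$ happens to be aligned with a degenerate direction of $H(w)$. A secondary difficulty is the construction and verification of the counterexamples in the last optimality claim, since checking that an explicit polynomial realises its advertised complex dimension, i.e.\ that no smaller $\mathbb Q$-independent generating set exists, is a genuinely combinatorial task that interacts non-trivially with the chosen degenerate geometry.
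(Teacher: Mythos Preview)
Your overall framework matches the paper's: reduce compactness to the vanishing-Carleson criterion of Lemma~\ref{lem:QS}, localise near each boundary zero via compactness of $\TT^d$ (this is Lemma~\ref{lem:compactTd}), diagonalise the Hessian of $\mathrm{Re}\,\phi$, and use the parametrized Morse lemma. The cases $J(\Phi,w)\geq 2$, and $J(\Phi,w)\geq 1$ with $\ell_{d+1}$ independent of the positive directions, go essentially as you sketch.

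The genuine gap is in the degree-$2$ case with $J(\Phi,w)=1$, and you have the difficulty backwards. The hard sub-case is not when $\nabla(\mathrm{Im}\,\phi)(\vartheta)$ lies in a \emph{degenerate} direction of the Hessian (that is the easy ``independence'' case: the imaginary-part level set then constrains a variable not already controlled by the quadratic part of $\mathrm{Re}\,\phi$), but when it lies in the \emph{unique positive} direction $\ell_1$. In that ``dependence'' case, after the parametrized Morse lemma one has $\mathrm{Re}\,\phi\circ\Psi(u,v)=u^2+h(v)$ and the leading part of $\mathrm{Im}\,\phi\circ\Psi$ is a function of $u$ alone; the imaginary constraint then gives nothing on $v$. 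Your assertion that ``the degree bound together with positivity forces each degenerate direction to exhibit quartic-or-higher vanishing of $\mathrm{Re}\,\phi$'' is exactly what fails: $h$ can vanish identically. The degree-$2$ map $\Phi(z_1,z_2)=\tfrac12(1-z_1z_2)$ sends $\DD^2$ into $\overline{\CC_0}$, has $\mathrm{Re}\,\phi(\theta)=\tfrac12(1-\cos(\theta_1+\theta_2))$, and in the coordinates $u=\theta_1+\theta_2$, $v=\theta_1-\theta_2$ one gets $h\equiv 0$ and no $v$-dependence in $\mathrm{Im}\,\phi$ either; the Carleson mass is then $\gg\veps$. The paper's resolution is Lemma~\ref{lem:keylemma}: a lengthy Taylor-coefficient comparison (done partly by computer algebra) showing that $\tfrac12(1-z_1z_2)$ is the \emph{only} degree-$2$ map $\DD^2\to\CC_0$ for which both the remainder $h$ and the $v$-part of $\mathrm{Im}\,\phi$ vanish to high order, together with the observation that this map is not an \emph{optimal} Bohr lift (it comes from $\varphi(s)=\tfrac12(1-(q_1q_2)^{-s})$, which has complex dimension $1$). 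There is no soft substitute for this computation. You also omit the separate case $J(\Phi,w)=0$ in degree $2$: here the vanishing of all quadratic and cubic terms, combined with positivity, forces $\Phi(z)=\sum_j a_j(1-z_j)+\tfrac{a_j}{2}(1-z_j)^2$ with every $a_j>0$, and one falls back on Theorem~\ref{thm:polynomials}.

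For the last optimality claim your plan is also off. The statement requires non-compact examples for \emph{every} degree $D\geq 3$, not just large $D$, and the mechanism is not that high-order contact ``overwhelms'' the normalisation. The paper's construction (Lemma~\ref{lem:Cex3}) is $\Phi(z)=(1-z_1)+\delta(1-z_1)^2 P(z)$ with $P$ an arbitrary polynomial in $d$ variables and $\delta>0$ small: near $\mathbf 1$ both $\mathrm{Re}\,\Phi$ and $\mathrm{Im}\,\Phi$ are controlled by $\theta_1$ alone, so the remaining $d-1$ variables contribute no decay whatsoever and $\mu_\varphi(Q(0,\veps))\gg\veps$ for every $d$ and every degree of $P$.
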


At this point we should mention that Theorem~\ref{thm:main} does not encompass Theorem~\ref{thm:polynomials}, and we will return to this point later (see Section~\ref{sec:discussion}). However, Theorem~\ref{thm:main} allows us to conclude that for the Dirichlet polynomials $\varphi$ given by \eqref{eq:simplemixedcase}, which have complex dimension and degree equal to $2$, the induced composition operators are compact.

We are also interested in the degree of compactness of our operators, which may be estimated using their approximation numbers. 
\begin{defn}
	Let $H$ be a Hilbert space and let $T\in\mathfrak L(H)$. The $n$th approximation number of $T$, denoted $a_n(T)$, is the distance of $T$ to the operators of rank $<n$. 
\end{defn}
The study of the behaviour of $a_n(\mathcal C_\varphi)$ when $\varphi\in\mathcal G$ is a linear symbol \eqref{eq:linearcase} has been done in \cite{QS14}. In particular, it is shown there that $$\left(\frac{1}{n}\right)^{(d-1)/2}\ll a_n(\mathcal C_\varphi)\ll \left(\frac{\log n}n\right)^{(d-1)/2}$$ where $d$ is the complex dimension of $\varphi$. We will extend this result to a general context. To keep this introduction sufficiently short, we refer to Section \ref{sec:approximationnumber} for our statement, and give only one striking consequence of it: We may distinguish the Schatten classes of linear operators on $\mathcal H^2$ using composition operators induced by polynomial symbols. By definition, a compact linear operator $T$ belongs to the Schatten class $S_p$, for $0<p<\infty$, if $$\|T\|_p^p:=\textrm{Tr}\left(|T|^p\right)=\sum_{n=1}^{\infty}a_n(T)^p <\infty.$$ 
\begin{cor}
	\label{cor:schatten} Let $0<p<q$. There exists a Dirichlet polynomial $\varphi\in\mathcal G$ such that $\mathcal C_\varphi\in S_q\backslash S_p$. 
\end{cor}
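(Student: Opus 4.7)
The plan is to apply the sharp two-sided estimates on $a_n(\mathcal C_\varphi)$ that will be established in Section~\ref{sec:approximationnumber}. These extend the linear-case bounds $n^{-(d-1)/2}\ll a_n(\mathcal C_\varphi)\ll (\log n/n)^{(d-1)/2}$ by taking into account the local behaviour of the Bohr lift $\Phi$ at its boundary minima, and should yield, for a polynomial symbol of complex dimension $d$ and appropriate local degree $k$ at a critical boundary point, a rate of the form $a_n(\mathcal C_\varphi)\asymp n^{-\alpha(d,k)}$ up to logarithmic factors. Note that with linear symbols alone the only accessible critical Schatten exponents are $\{2/(d-1):d\geq 2\}$, a discrete family which cannot separate two arbitrary Schatten classes $S_p,S_q$ when $q-p$ is small; the whole point of the corollary is that genuinely non-linear polynomial symbols allow one to realise exponents $\alpha(d,k)$ on a dense subset of $(0,\infty)$.

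Given $0<p<q$, I would first select integers $d\geq 2$ and $k\geq 1$ so that $1/q<\alpha(d,k)\leq 1/p$, using the density of the accessible values. I would then produce an explicit $\varphi\in\mathcal G$ with complex dimension $d$ and the prescribed local degree at a boundary minimum, for instance by starting from a linear symbol on $\mathbb Q$-independent primes $q_1,\ldots,q_d$ and adjoining a mixed monomial $c'\prod_{j=1}^d q_j^{-ks}$ of total degree $k$, generalising \eqref{eq:simplemixedcase}. A direct check would verify that $\varphi\in\mathcal G$, has unrestricted range, and satisfies the hypotheses of Theorem~\ref{thm:main}, so that $\mathcal C_\varphi$ is compact and the approximation-number estimates from Section~\ref{sec:approximationnumber} apply to it.

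From here the conclusion is an elementary convergence test: the upper bound gives
\[\sum_{n\geq 1}a_n(\mathcal C_\varphi)^q\lesssim \sum_{n\geq 1} n^{-q\alpha(d,k)}(\log n)^{O(1)}<\infty,\]
so $\mathcal C_\varphi\in S_q$, while the lower bound gives
\[\sum_{n\geq 1} a_n(\mathcal C_\varphi)^p\gtrsim \sum_{n\geq 1} n^{-p\alpha(d,k)}=\infty,\]
so $\mathcal C_\varphi\notin S_p$. The principal obstacle is wholly contained in Section~\ref{sec:approximationnumber}: one needs both the upper \emph{and} the matching lower bound on $a_n(\mathcal C_\varphi)$ to be sharp enough that the (at most logarithmic) gap between them does not obstruct the separation, together with a sufficiently rich family of admissible exponents $\alpha(d,k)$. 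Once the extended approximation-number theorem is in hand, the corollary reduces to the choice of parameters above and the routine series test.
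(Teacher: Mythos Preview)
Your overall strategy is correct and matches the paper's: invoke Theorem~\ref{thm:approximationnumber} to get matching upper and lower bounds $n^{-\eta_\varphi}\ll a_n(\mathcal C_\varphi)\ll(\log n/n)^{\eta_\varphi}$, choose parameters so that $\eta_\varphi$ lands strictly between $1/q$ and $1/p$, and conclude by the series test. The logarithmic gap is harmless precisely because one may take $\eta_\varphi$ strictly inside $(1/q,1/p)$.

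The gap in your proposal is the construction of the symbol. You suggest adjoining a single mixed monomial $c'\prod_{j=1}^d q_j^{-ks}$ to a linear symbol, in the spirit of~\eqref{eq:simplemixedcase}. But Theorem~\ref{thm:approximationnumber} does not apply to an arbitrary compact $\mathcal C_\varphi$; it requires $\varphi$ to be \emph{boundary regular}, meaning the Bohr lift admits the precise local expansions~\eqref{eq:apboundaryregular1}--\eqref{eq:apboundaryregular2} with independent linear forms $\ell_1,\dots,\ell_d$ and prescribed even exponents $k_j$. For a generic mixed perturbation of a linear symbol there is no reason this structure holds, nor is it clear how the parameter $k$ in your monomial would translate into the exponents $k_j$ governing $\eta_\varphi$. (Note also that you appeal to Theorem~\ref{thm:main}, which only gives compactness; the input you actually need is Theorem~\ref{thm:approximationnumber}.)

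The paper sidesteps this entirely by working with \emph{separated} symbols $\Phi(z)=\sum_{j=1}^d\Phi_j(z_j)$. Lemma~\ref{lem:1dimpoly} manufactures, for each even $k$, a one-variable polynomial $\Phi_j$ with $\mathrm{Re}\,\Phi_j(e^{ix})=(1-\cos x)^k$ exactly; summing $d$ copies gives a boundary regular $\Phi$ with a single boundary zero at $\mathbf 1$, $\ell_j(\theta)=\theta_j$, and all $k_j=k$. This yields
\[
\eta_\varphi=\frac{d-1}{k}\cdot\frac{k}{2(k-1)}=\frac{d-1}{2(k-1)},
\]
and since $d\geq 2$ and even $k\geq 2$ are free, these values are dense in $(0,\infty)$. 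That is the missing ingredient in your argument: an explicit, verifiably boundary-regular family realising a dense set of compactness indices.
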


Let us end this introduction by mentioning that the the composition operators induced by the maps $\varphi_1$ and $\varphi_2$ have different degrees of compactness. Indeed, we will show that $$\left(\frac 1n\right)^{1/2}\ll a_n\left(\mathcal C_{\varphi_1}\right)\ll \left(\frac{\log n}{n}\right)^{1/2}\qquad \textrm{ and } \qquad \left(\frac 1n\right)^{1/3}\ll a_n\left(\mathcal C_{\varphi_2}\right)\ll \left(\frac{\log n}{n}\right)^{1/3}.$$

\subsection*{Organization} The remainder of this paper is divided into seven sections. 
\begin{itemize}
	\item Section~\ref{sec:thmc0} contains the proof of Theorem~\ref{thm:c0}. The content of this section is independent from that of the following sections. 
	\item In Section~\ref{sec:prelim} we introduce some necessary tools and results needed for the proof of Theorem~\ref{thm:polynomials} and Theorem~\ref{thm:main}. 
	\item Section~\ref{sec:polyproof} is devoted to the proof of Theorem~\ref{thm:polynomials}. 
	\item Section~\ref{sec:mainproof} contains the proof of Theorem~\ref{thm:main}. 
	\item In Section~\ref{sec:lemmaproof} we prove Lemma~\ref{lem:keylemma}, which is the most technical part of Theorem~\ref{thm:main}. 
	\item In Section~\ref{sec:discussion} we discuss the case $\mathrm{deg}(\varphi)\geq3$ and $J(\Phi,w)=0$, its connection to Theorem~\ref{thm:polynomials} and some related examples. 
	\item Finally, in Section~\ref{sec:approximationnumber}, we discuss the decay of the sequence of approximation numbers for some of our operators. 
\end{itemize}

\subsection*{Notation} The notation $f(\veps)\ll g(\veps)$ will mean that $f(\veps)\leq Cg(\veps)$ for some constant $C$ which does not depend on $\veps$. We will sometimes write $f(\veps)\ll_a g(\veps)$ to emphasize that $C$ depends on $a$. As usual, we let $\{p_j\}$ denote the sequence of prime numbers written in increasing order. We let $\mathbf{m}_d$ denote the normalized Lebesgue measure on $\mathbb{T}^d$. This measure is invariant under rotations. If we do not have a priori knowledge of the complex dimension $d$, we will often call this measure $\mathbf{m}_\infty$. For a point $z=e^{i\theta}$ on the unit circle $\TT$, we will always assume that $\theta\in(-\pi,\pi]$. Finally, $\mathbf 0$ will denote the point $(0,\,\ldots,\,0) \in \mathbb{C}^d$, and $\mathbf 1$ will similarly denote the point $(1,\,\ldots,\,1)$.

\section{Proof of Theorem~\ref{thm:c0}} \label{sec:thmc0} Let $\varphi(s)=c_0 s+c_1+\sum_{n=2}^N c_n n^{-s}\in\mathcal G$ such that $c_0\geq 1$. We already know that if $\varphi$ has restricted range, then $\mathcal{C}_\varphi$ is compact. Let us therefore assume that $\mathcal{C}_\varphi$ is compact and also assume that $\varphi$ has unrestricted range, to argue by contradiction. 

By \cite[Thm.~3]{Ba03}, we know that 
\begin{equation}
	\label{EQCOCOMPACTNESS} \frac{\mathrm{Re}\,\varphi(s)}{\mathrm{Re}(s)}\xrightarrow{\mathrm{Re}(s)\to 0}+\infty. 
\end{equation}

Now, since $\varphi$ has unrestricted range there exists a sequence $\{s_k=\sigma_k+it_k\}_{k\geq1}$ in $\mathbb{C}_0$ such that $\mathrm{Re}\,\varphi(s_k) \to 0$. It is well-known that this forces that $\sigma_k \to 0$ (see \cite{Ba03}). Then
\[\mathrm{Re}\,\varphi(s_k)=c_0\sigma_k + \mathrm{Re}(c_1)+\sum_{n=2}^N n^{-\sigma_k}\big(\mathrm{Re}(c_n)\cos(t_k\log(n))+\mathrm{Im}(c_n)\sin(t_k\log(n))\big).\]
By successive extraction of subsequences, we may assume that there exist real numbers $a_n$ and $b_n$ so that for $2 \leq n \leq N$ we have, as $k \to \infty$,
\[\cos(t_k\log(n))\to a_n\qquad \text{ and }\qquad \sin(t_k\log(n))\to b_n.\]
Hence, we may write
\[\mathrm{Re}\,\varphi(s_k)=c_0\sigma_k +\mathrm{Re}(c_1)+\sum_{n=2}^N n^{-\sigma_k}\big(\mathrm{Re}(c_n)a_n+\mathrm{Im}(c_n)b_n\big)+\sum_{n=2}^N n^{-\sigma_k}F_n(t_k),\]
where each $F_n(t_k)\to 0$ as $k \to \infty$. Since $\mathrm{Re}\,s_k=\sigma_k$ also goes to $0$, we may deduce that
\[\mathrm{Re}(c_1)+\sum_{n=2}^N \big(\mathrm{Re}(c_n)a_n+\mathrm{Im}(c_n)b_n\big)=0,\]
so that we have
\[\mathrm{Re}\,\varphi(s) = c_0\sigma + \sum_{n=2}^N \left(n^{-\sigma}-1\right)\big(\mathrm{Re}(c_n)a_n+\mathrm{Im}(c_n)b_n\big) + \sum_{n=2}^N n^{-\sigma}F_n(t).\]
We will now choose another sequence $\left\{s_k' = \sigma_k' + it_k\right\}_{k\geq1}$ where $\mathrm{Re}(s_k') \to 0$ in order to obtain a contradiction with \eqref{EQCOCOMPACTNESS}. More precisely, let $\left\{\sigma'_k\right\}_{k\geq1}$ be any sequence of positive real numbers tending to $0$ such that, for any $n=2,\ldots,N$ and every $k\geq 1$, we have $n^{-\sigma'_k}|F_n(t_k)|\leq\sigma'_k$. Then we obtain
\[\mathrm{Re}\, \varphi(s'_k) = c_0 \sigma_k' + \sum_{n=2}^N \left(n^{-\sigma_k'}-1\right)\big(\mathrm{Re}(c_n)a_n+\mathrm{Im}(c_n)b_n\big) + \sum_{n=2}^N n^{-\sigma'_k}F_n(t_k) = \mathcal{O}(\sigma_k') = \mathcal{O}\big(\mathrm{Re}(s_k')\big),\]
and this contradicts \eqref{EQCOCOMPACTNESS}. The assumption that $\varphi$ has unrestricted range must be wrong. \qed
\begin{rem}
	An inspection of the proof reveals that the statement of Theorem \ref{thm:c0} remains true if we assume that $\varphi(s)=c_0s+c_1+\sum_{n=2}^{\infty}c_n n^{-s}\in\mathcal G$ with $c_0\geq 1$, $\sum_{n=1}^\infty |c_n|<+\infty$ and that the complex dimension of $\varphi$ is finite. The latter assumption is needed to use \eqref{EQCOCOMPACTNESS}. 
\end{rem}

\section{Preliminaries} \label{sec:prelim} As explained in the introduction, our main tool for proving or disproving compactness is a result from \cite{QS14}. We formulate it in a more general context than for polynomials since it will be used under this form in Section \ref{sec:approximationnumber}. Recall that a \emph{Carleson square} in $\CC_0$ is a closed square in $\overline{\CC_0}$ with one of its sides lying on the vertical line $i\RR$; the side length of $Q$ is denoted by $\ell(Q)$. A non-negative Borel measure $\mu$ on $\overline{\CC_0}$ is called a \emph{vanishing Carleson measure} if $$\limsup_{\ell(Q)\to 0}\frac{\mu(Q)}{\ell(Q)}=0.$$
\begin{lem}
	\label{lem:QS} Suppose that $\varphi(s)=\sum_{n\geq 1}c_nn^{-s}\in\mathcal G$ and that $\varphi(\CC_0)$ is bounded. The corresponding composition operator $\mathcal {C}_\varphi$ is compact on $\mathcal H^2$ if and only if the measure
	\[\mu_\varphi(E) := \mathbf{m}_\infty \left(\left\{z \in \mathbb{T}^\infty \,:\, \Phi(z) \in E\right\}\right), \qquad E \subseteq \mathbb{C}_{0}.\]
	is vanishing Carleson in $\mathbb{C}_{0}$, where $\Phi$ denotes a Bohr lift of $\varphi$. 
	\begin{proof}
		This is Corollary 4.1 in \cite{QS14}. 
	\end{proof}
\end{lem}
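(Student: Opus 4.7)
The plan is to reduce compactness of $\mathcal C_\varphi$ to the vanishing Carleson condition on $\mu_\varphi$ via two independent steps: a change-of-variable identity for the norm of $\mathcal C_\varphi f$, and a Carleson-type compactness criterion for the embedding of $\mathcal H^2$ into $L^2$ of a measure on $\overline{\CC_0}$.

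First, I would establish the change of variables
\[\|\mathcal C_\varphi f\|_{\mathcal H^2}^2 \;=\; \int_{\overline{\CC_0}}\left|f\big(\tfrac 12+w\big)\right|^2 d\mu_\varphi(w).\]
The natural route is to verify it first on monomials $f(s)=n^{-s}$ by a Bohr-lift computation. Since the Bohr lift intertwines multiplication and exponentiation on Dirichlet polynomials, the Bohr lift of $n^{-\varphi(s)}=n^{-c_1}\exp(-\log n\cdot\varphi_0(s))$ equals $n^{-c_1}\exp(-\log n\cdot\Phi_0(z))=n^{-(1/2+\Phi(z))}$. Both sides of the claimed identity then reduce to $\int_{\TT^\infty} n^{-1-2\operatorname{Re}\Phi(z)}\,d\mathbf m_\infty(z)$, so the identity holds on monomials and, by linearity, on Dirichlet polynomials. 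Passing to general $f\in\mathcal H^2$ requires giving meaning to $f(\tfrac12+w)$ when $\operatorname{Re}w=0$, which is accomplished through the local $L^2$-embedding of $\mathcal H^2$ into bounded intervals of the critical line.

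Second, I would invoke a Carleson-type criterion: for a finite positive Borel measure $\nu$ on $\overline{\CC_0}$ supported in a bounded vertical strip, the map $f\mapsto f(\tfrac12+\cdot)$ is bounded from $\mathcal H^2$ to $L^2(\nu)$ iff $\nu$ is Carleson in $\CC_0$, and compact iff $\nu$ is vanishing Carleson. Necessity of the vanishing condition would be proved by testing on the normalized reproducing kernels of $\mathcal H^2$ at points approaching the critical line, which tend weakly to $0$, and comparing $\|\mathcal C_\varphi k_w/\|k_w\|\|^2$ to $\nu(Q_w)/\ell(Q_w)$ for a shrinking Carleson square $Q_w$. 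Sufficiency would be obtained by taking a bounded sequence $\{f_n\}\subset\mathcal H^2$ with $f_n\to 0$ weakly (equivalently, uniformly on compacta of $\CC_{1/2}$), splitting $\operatorname{supp}\nu$ into a part bounded away from the critical line, where uniform convergence forces $\int|f_n|^2 d\nu\to 0$, and a thin tubular neighbourhood of the critical line handled by covering with small Carleson squares, applying the vanishing hypothesis, and using the already-known bounded embedding to control the tail. Applying this to $\nu=\mu_\varphi$ closes the argument.

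The most delicate point is justifying the change-of-variable identity when $\operatorname{Re}\Phi(z)=0$ on a set of positive $\mathbf m_\infty$-measure. I would handle this by first proving the identity for the shifted symbols $\varphi_\veps(s):=\varphi(s)+\veps$, whose ranges avoid the critical line and hence require only analytic evaluations of $f$, and then passing to the limit $\veps\to 0^+$. Dominated convergence on the right-hand side is justified because $\varphi(\CC_0)$ is bounded, so $\operatorname{supp}\mu_{\varphi_\veps}$ stays inside a common bounded strip, while the bounded-embedding half of the Carleson criterion provides a uniform $L^2(\mu_{\varphi_\veps})$-bound in terms of $\|f\|_{\mathcal H^2}$ needed to control the convergence on the boundary portion.
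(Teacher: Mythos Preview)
The paper does not prove this lemma at all: its entire proof is the single sentence ``This is Corollary 4.1 in \cite{QS14}.'' So there is no argument in the paper to compare your sketch against; you have supplied a self-contained outline where the authors chose simply to cite Queff\'elec--Seip.

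That said, your outline is essentially the standard route and is correct in spirit. The two ingredients you isolate---the pushforward identity $\|\mathcal C_\varphi f\|_{\mathcal H^2}^2=\int |f(\tfrac12+w)|^2\,d\mu_\varphi(w)$ obtained via the Bohr lift, and the characterization of compactness of the embedding $\mathcal H^2\hookrightarrow L^2(\mu)$ for compactly supported $\mu$ by the vanishing Carleson condition---are precisely the content of the Queff\'elec--Seip result. A couple of points deserve care if you flesh this out. First, the bounded Carleson embedding for $\mathcal H^2$ is a genuinely nontrivial input: it relies on the local embedding theorem for $\mathcal H^2$ on the critical line and is only known for measures supported in a bounded horizontal strip, which is exactly why the hypothesis ``$\varphi(\CC_0)$ bounded'' is present. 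Second, in your limiting step with $\varphi_\veps=\varphi+\veps$, the passage to the limit on the left-hand side also needs justification (continuity of $\|\mathcal C_{\varphi_\veps}f\|$ as $\veps\to 0^+$), not just the right-hand side; this follows from the continuity of vertical translates in $\mathcal H^2$, but should be mentioned. With these caveats your plan would yield a complete proof of what the paper merely quotes.
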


Hence we consider squares
\[Q=Q(\tau,\varepsilon) = [0,\varepsilon]\times[\tau-\varepsilon/2,\tau+\varepsilon/2],\]
and want to investigate whether $\mu_\varphi(Q) = o(\varepsilon)$ uniformly in $\tau \in \mathbb{R}$. Our next lemma points out that this depends only on the local behaviour of $\Phi$.
\begin{lem}
	\label{lem:compactTd} Let $\varphi$ be a Dirichlet polynomial \eqref{eq:diripoly} with $c_0=0$ mapping $\mathbb{C}_0$ into $\mathbb{C}_{1/2}$ and let $\Phi$ be a minimal Bohr lift of $\varphi$. If for every $w \in \mathbb{T}^d$ with $\mathrm{Re}\, \Phi(w)=0$ there exists a neighbourhood $\mathcal{U}_w \ni w$ in $\mathbb{T}^d$, constants $C_w > 0$ and $\kappa_w > 1$ such that for every $\tau \in \mathbb{R}$ and every $\varepsilon>0$ we have 
	\begin{equation}
		\label{eq:Minf} \mathbf{m}_d \left(\left\{z \in \mathcal{U}_w \, : \, \Phi(z) \in Q(\tau,\,\varepsilon) \right\} \right)\leq C_w \varepsilon^{\kappa_w}, 
	\end{equation}
	then $\mathcal C_{\varphi}$ is compact. 
	\begin{proof}
		Since $\varphi$ is a Dirichlet polynomial, it has finite complex dimension $d$.
		
		We first observe that \eqref{eq:Minf} is always satisfied for those $w\in\TT^d$ with $\mathrm{Re}\,\Phi(w)>0$. Indeed, by continuity of $\Phi$, we may always find a neighbourhood $\mathcal U_w\ni w$ and $\veps_0>0$ such that, for all $\veps\in(0,\veps_0)$ and all $\tau\in\RR$, $\big\{z\in\mathcal U_w\,:\, \Phi(z)\in Q(\tau,\veps)\big\}$ is empty. We may then take $\kappa_w>1$ be arbitrary and choose $C_w$ with $C_w \veps_0^{\kappa_w}\geq 1$.
		
		We will then use a compactness argument and Lemma~\ref{lem:QS}. Indeed, there exists a finite number of points $w_1,\ldots,w_N$ such that $\TT^d$ is covered by $\mathcal U_{w_1},\ldots,\mathcal U_{w_N}$. Now, we may take $C=C_{w_1}+\cdots+C_{w_N}$ and $\kappa=\min(\kappa_{w_1},\ldots,\kappa_{w_N})$. Hence, for all $\tau\in\RR$ and all $\veps>0$, $$\mathbf{m}_d\left(\left\{z\in \mathcal \TT^d \,:\, \Phi(z)\in Q(\tau,\veps)\right\}\right)\leq C\veps^{\kappa}$$ which achieves the proof of the compactness of $\mathcal C_\varphi$ on $\mathcal H^2$. 
	\end{proof}
\end{lem}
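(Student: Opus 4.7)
The plan is to reduce the claim via Lemma~\ref{lem:QS} to proving that the pushforward measure $\mu_\varphi$ is vanishing Carleson, which amounts to showing
$\mathbf{m}_d(\{z \in \mathbb{T}^d : \Phi(z) \in Q(\tau,\varepsilon)\}) = o(\varepsilon)$ uniformly in $\tau \in \mathbb{R}$ as $\varepsilon \to 0^+$. Since $\varphi$ is a Dirichlet polynomial with $c_0=0$, its Bohr lift $\Phi$ depends only on finitely many variables $z_1,\ldots,z_d$, so the pushforward on $\mathbb{T}^\infty$ appearing in Lemma~\ref{lem:QS} collapses to the corresponding pushforward on $\mathbb{T}^d$; this is the only point at which the polynomial hypothesis is used.

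The strategy is to combine the local estimate \eqref{eq:Minf} supplied at every boundary point with a trivial estimate elsewhere, and then patch via compactness of $\mathbb{T}^d$. For each $w \in \mathbb{T}^d$ with $\mathrm{Re}\,\Phi(w) > 0$, continuity of $\Phi$ produces an open neighbourhood $\mathcal{U}_w$ and a number $\varepsilon_w > 0$ such that $\mathrm{Re}\,\Phi(z) \geq \varepsilon_w$ for all $z \in \mathcal{U}_w$; hence $\{z \in \mathcal{U}_w : \Phi(z) \in Q(\tau,\varepsilon)\}$ is empty whenever $\varepsilon < \varepsilon_w$, regardless of $\tau$. This means \eqref{eq:Minf} holds trivially on such $\mathcal{U}_w$ for any chosen $\kappa_w > 1$ and any $C_w$ large enough that $C_w \varepsilon_w^{\kappa_w} \geq 1$.

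Together with the neighbourhoods from the hypothesis at boundary points, the family $\{\mathcal{U}_w\}_{w \in \mathbb{T}^d}$ is an open cover of the compact torus $\mathbb{T}^d$, from which I extract a finite subcover $\mathcal{U}_{w_1}, \ldots, \mathcal{U}_{w_N}$. Setting $C := \sum_{j=1}^N C_{w_j}$ and $\kappa := \min_{1 \leq j \leq N} \kappa_{w_j} > 1$, summing the local bounds gives, for all $\tau \in \mathbb{R}$ and all $\varepsilon \in (0,1)$,
$\mathbf{m}_d(\{z \in \mathbb{T}^d : \Phi(z) \in Q(\tau,\varepsilon)\}) \leq C\, \varepsilon^{\kappa}$. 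Since $\kappa > 1$, the right-hand side is $o(\varepsilon)$ uniformly in $\tau$, so Lemma~\ref{lem:QS} delivers the compactness of $\mathcal{C}_\varphi$.

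There is no real obstacle here; the lemma is a soft transport mechanism, packaging the finite collection of local estimates into a global vanishing Carleson bound. The genuine work of the paper lies elsewhere, namely in \emph{verifying} the local hypothesis \eqref{eq:Minf} at each boundary point $w$ where $\mathrm{Re}\,\Phi(w) = 0$, which is precisely the task of the subsequent sections and hinges on the Hessian structure of $\mathrm{Re}\,\phi$ captured by the boundary index $J(\Phi,w)$.
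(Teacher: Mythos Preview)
Your proof is correct and follows essentially the same approach as the paper: extend the local estimate \eqref{eq:Minf} trivially to non-boundary points by continuity, pass to a finite subcover of $\mathbb{T}^d$, and sum to obtain a global bound $C\varepsilon^\kappa$ with $\kappa>1$, which is $o(\varepsilon)$ uniformly in $\tau$, whence compactness by Lemma~\ref{lem:QS}. Your closing remark on where the real work lies is also accurate.
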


Hence, we will require more information about the Taylor coefficients of $\Phi$ at a boundary point. Assume that $\Phi(w)=0$ where $w=\mathbf 1$. In this case, we will rewrite 
\begin{equation}
	\label{eq:newform} \Phi(z) = \sum_{n \in \Lambda} \widetilde{c_n} \prod_{j=1}^d (1-z_j)^{\alpha_j} = \sum_{\alpha\in\NN^d} c_\alpha (1-z)^\alpha, 
\end{equation}
where we have adopted the convention $c_\alpha = \widetilde{c_n}$, which is not generally equal to $c_n$. We shall need a kind of Julia--Caratheodory theorem for $\Phi$ of the form \eqref{eq:newform}.
\begin{lem}
	\label{lem:nonneglin} Let $\Phi:\DD^d\to\CC_0$ be of the form \eqref{eq:newform} and let $|\alpha|=1$. Then $c_\alpha \geq 0$. Moreover, there exists at least one multi-index $\alpha$ with $|\alpha|=1$ and $c_\alpha>0$, unless $\Phi \equiv 0$. 
	\begin{proof}
		We may assume that $\alpha = (1,0,\ldots,0)$. Consider the one-variable polynomial
		\[\psi(w) = \Phi(w,1,\ldots,1).\]
		Clearly, $\psi$ maps $\mathbb{D}$ to $\mathbb{C}_0$, and $\psi(1)=0$. We write
		\[\psi(w) = a(1-w)+b(1-w)^2 + \mathcal{O}\left((1-w)^3\right).\]
		We set $w = e^{i\theta}$ and obtain
		\[\psi\big(e^{i\theta}\big) = a \left(\frac{\theta^2}{2}-i\theta\right)-b\theta^2 + \mathcal{O}\left(\theta^3\right).\]
		In particular,
		\[\mathrm{Re}\,\big(\psi\big(e^{i\theta}\big)\big) = \theta\mathrm{Im}\,(a) + \theta^2\left(\frac{\mathrm{Re}\,(a)}{2}-\mathrm{Re}\,(b)\right) + \mathcal{O}\left(\theta^3\right).\]
		Since this should be non-negative, clearly $\mathrm{Im}(a)=0$. We now set $w = 1-\delta$ for $0 < \delta < 1$ and consider $\psi(\delta) = a\delta + \mathcal{O}\left(\delta^2\right)$. Since the real part of this also should be non-negative as $\delta \to 0^+$ we must have $a\geq0$. Hence $c_\alpha\geq 0$ when $|\alpha|=1$. 
		
		Now, consider the mapping
		\[\alpha \mapsto n(\alpha) = \prod_{j=1}^d p_j^{\alpha_j}.\]
		It defines a total order on $\NN^d$ by setting $\alpha \leq \beta$ if and only if $n(\alpha)\leq n(\beta)$. Assume that $\Phi\not\equiv 0$ and that $c_\alpha=0$ whenever $|\alpha|=1$. Consider
		\[\beta = \inf\left\{\alpha \,:\, c_\alpha\neq0\right\},\]
		which exists since $\Phi \not\equiv 0$. There is $\theta \in (-\pi,\pi]$ so that $c_\beta = |c_\beta| e^{i\theta}$. Fix $\theta_j \in (-\pi/2,\pi/2)$ and define
		\[z_j = 1-p_j^{-\sigma}e^{i\theta_j},\]
		where $\sigma>0$. For large enough $\sigma$, clearly $z=(z_1,\,\ldots,\,z_d)\in\mathbb{D}^d$. Moreover,
		\[\Phi(z_1,\,\ldots,\,z_d) = |c_\beta|e^{i\theta}[n(\beta)]^{-\sigma} e^{i(\beta_1\theta_1+\cdots+\beta_d\theta_d)} + o\left([n(\beta)]^{-\sigma}\right),\]
		as $\sigma \to \infty$. This implies that
		\[\mathrm{Re}\, \left(\Phi(z_1,\,\ldots,\,z_d)\right) = |c_\beta|[n(\beta)]^{-\sigma}\cos(\theta + \beta_1\theta_1 + \cdots \beta_d \theta_d) + o\left([n(\beta)]^{-\sigma}\right).\]
		Since $|\beta|\geq2$, we can always choose $\theta_j \in (-\pi/2,\pi/2)$ such that $\cos(\theta + \beta_1\theta_1 + \cdots + \beta_d \theta_d)<0$. This contradicts the mapping properties of $\Phi$, and hence the assumption that $c_\alpha=0$ whenever $|\alpha|=1$ is wrong. 
	\end{proof}
\end{lem}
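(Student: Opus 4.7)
For the first claim, my plan is to reduce to a one-variable problem by restricting $\Phi$ to a line through $w = \mathbf{1}$. Taking $\alpha = (1, 0, \ldots, 0)$ without loss of generality, set $\psi(w) := \Phi(w, 1, \ldots, 1)$, a polynomial from $\DD$ into $\overline{\CC_0}$ with $\psi(1) = 0$ and Taylor expansion $\psi(w) = c_\alpha (1-w) + O((1-w)^2)$. I would probe $\psi$ along two complementary paths at the boundary point $w = 1$: first along the unit circle $w = e^{i\theta}$, where a short expansion produces $\mathrm{Re}\,\psi(e^{i\theta}) = \theta\, \mathrm{Im}(c_\alpha) + O(\theta^2)$; since this must remain non-negative as $\theta$ changes sign, $\mathrm{Im}(c_\alpha) = 0$. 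Then along the radial path $w = 1 - \delta$ with $\delta \to 0^+$, where $\mathrm{Re}\,\psi(1-\delta) = c_\alpha \delta + O(\delta^2) \geq 0$ forces $c_\alpha \geq 0$.

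For the second claim I would argue by contradiction. Assume $\Phi \not\equiv 0$ but every linear coefficient vanishes. Fix the total ordering on $\NN^d$ defined by $\alpha \leq \beta$ iff $n(\alpha) := \prod_j p_j^{\alpha_j} \leq n(\beta)$, and let $\beta$ be the $\leq$-smallest multi-index with $c_\beta \neq 0$; by hypothesis $|\beta| \geq 2$. The aim is to produce a point of $\DD^d$ where $\mathrm{Re}\,\Phi$ is strictly negative. I would test with $z_j = 1 - p_j^{-\sigma} e^{i\theta_j}$, where $\sigma$ is large enough that $z \in \DD^d$ and the angles $\theta_j \in (-\pi/2, \pi/2)$ are tunable. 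Then $\prod_j (1-z_j)^{\alpha_j} = [n(\alpha)]^{-\sigma}\, \exp\!\big(i(\alpha_1 \theta_1 + \cdots + \alpha_d \theta_d)\big)$, and because $n(\beta)$ is strictly smallest among the active multi-indices, the $\beta$-term asymptotically dominates as $\sigma \to \infty$.

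Writing $c_\beta = |c_\beta| e^{i\vartheta}$, the dominant contribution to $\mathrm{Re}\,\Phi(z)$ is a positive multiple of $\cos(\vartheta + \beta_1\theta_1 + \cdots + \beta_d\theta_d)$. Since $|\beta| \geq 2$, as the $\theta_j$ range over $(-\pi/2, \pi/2)$ the combined phase $\beta_1\theta_1 + \cdots + \beta_d\theta_d$ sweeps an open interval of length strictly greater than $\pi$; hence for any $\vartheta$ we can force the cosine to be negative, contradicting $\Phi(\DD^d) \subset \CC_0$.

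The main obstacle is the second part: I need to isolate a single Taylor coefficient without a natural orthogonality tool on the $(1-z)^\alpha$ basis. The substitution $z_j = 1 - p_j^{-\sigma}e^{i\theta_j}$ does exactly this by exploiting unique factorization of integers—the primes force pairwise distinct decay rates $[n(\alpha)]^{-\sigma}$, while the angles $\theta_j$ provide free phase modulation for the dominant term. The $|\beta| \geq 2$ hypothesis is what makes the argument close: with $|\beta|=1$ the attainable phase range is only $(-\pi/2, \pi/2)$, which cannot flip the sign of the cosine for every $\vartheta$, consistent with the non-negativity forced by the first part.
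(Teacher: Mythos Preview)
Your proposal is correct and follows essentially the same approach as the paper's proof: the same one-variable restriction $\psi(w)=\Phi(w,1,\ldots,1)$ probed along the circle and along the radius for the first claim, and the same prime-based substitution $z_j=1-p_j^{-\sigma}e^{i\theta_j}$ together with the total order $\alpha\mapsto n(\alpha)=\prod_j p_j^{\alpha_j}$ to isolate the dominant term for the second. Your final paragraph even makes explicit why $|\beta|\ge 2$ is exactly what is needed to swing the phase past $\pi$, which the paper leaves implicit.
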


We will also need two lemmas from differential geometry. The first one is the parametrized Morse lemma (see for instance \cite[Sec.~4.44]{BG92}). 
\begin{lemun}
	[Parametrized Morse Lemma] Let $\mathcal U\subset\mathbb R^J\times\RR^{d-J}$ be a neighbourhood of $\mathbf{0}\in\mathbb R^d$ and let $F:\mathcal U\to\mathbb R,\ (u,v)\mapsto F(u,v)$ be a smooth function. Assume that $F(\mathbf 0)=0$, that $
	\partial F/
	\partial u_i(\mathbf 0)=0$ for all $i=1,\ldots,J$ and that the matrix
	\[\left(\frac{
	\partial^2 F}{
	\partial u_i
	\partial u_j}(\mathbf 0)\right)_{1\leq i,j\leq J}\]
	is positive definite. Then there exist a neighbourhood $\mathcal V\ni\mathbf{0}$ with $\mathcal V\subset\mathcal U$, a smooth diffeomorphism $\Gamma:\mathcal V\to \RR^d,\ (u,v)\mapsto (\gamma(u,v),v)$ with $\Gamma(\mathbf{0})=\mathbf{0}$ and a smooth map $h:\RR^{d-J}\to\mathbb R$ such that, for any $(u,v)\in\mathcal V$,
	\[F(u,v)=\sum_{j=1}^J \gamma_j(u,v)^2+h(v).\]
\end{lemun}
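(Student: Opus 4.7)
The plan is to reduce the parametrized statement to the classical Morse argument by first using the implicit function theorem to follow the $u$-critical point as $v$ varies, and then carrying $v$ through Hadamard's lemma and a smooth symmetric square root of the Hessian. First I would consider the system $\partial F/\partial u_i(u,v)=0$ for $i=1,\ldots,J$. It vanishes at $(\mathbf 0,\mathbf 0)$, and its Jacobian with respect to $u$ there is the positive definite Hessian of $F$ in the $u$-variables, which is invertible. The implicit function theorem therefore produces a smooth map $v\mapsto u^*(v)$, defined on a neighbourhood of $\mathbf 0\in\RR^{d-J}$, with $u^*(\mathbf 0)=\mathbf 0$ and $\partial F/\partial u_i(u^*(v),v)=0$ for all small $v$. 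Setting $h(v):=F(u^*(v),v)$ and
\[\widetilde F(u,v):=F(u+u^*(v),v)-h(v),\]
the function $\widetilde F$ vanishes at $u=\mathbf 0$ together with all its first $u$-derivatives for each $v$, and its Hessian in $u$ at the origin is still positive definite.

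Next, applying Hadamard's lemma twice in the $u$-variable with $v$ as a smooth parameter and symmetrizing yields a smooth symmetric $J\times J$ matrix-valued function $A(u,v)$ with
\[\widetilde F(u,v)=\sum_{i,j=1}^J A_{ij}(u,v)\,u_i u_j,\qquad A(\mathbf 0,\mathbf 0)=\tfrac12\,\mathrm{Hess}_u F(\mathbf 0).\]
After shrinking the neighbourhood, $A(u,v)$ remains positive definite throughout, so its principal symmetric square root $B(u,v):=A(u,v)^{1/2}$ is well defined and depends smoothly on $(u,v)$; smoothness can be read off from the holomorphic functional calculus formula
\[B(u,v)=\frac{1}{2\pi i}\oint_{\mathcal C}\sqrt{\lambda}\,(\lambda I-A(u,v))^{-1}\,d\lambda,\]
where $\mathcal C$ is a fixed contour in the right half-plane encircling the spectrum of $A(\mathbf 0,\mathbf 0)$. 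Setting $\widetilde\gamma(u,v):=B(u,v)u$, I then get $\widetilde F(u,v)=\langle A(u,v)u,u\rangle=|B(u,v)u|^2=\sum_{j=1}^J\widetilde\gamma_j(u,v)^2$.

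Finally I would define $\gamma(u,v):=\widetilde\gamma(u-u^*(v),v)$ and $\Gamma(u,v):=(\gamma(u,v),v)$. Translating back through $F(u,v)=\widetilde F(u-u^*(v),v)+h(v)$ yields the desired decomposition
\[F(u,v)=\sum_{j=1}^J\gamma_j(u,v)^2+h(v).\]
The Jacobian of $\Gamma$ at $(\mathbf 0,\mathbf 0)$ is block upper triangular, with the identity in the lower-right block and the invertible matrix $B(\mathbf 0,\mathbf 0)$ in the upper-left, so the inverse function theorem provides a neighbourhood $\mathcal V\subset\mathcal U$ of $\mathbf 0$ on which $\Gamma$ is a smooth diffeomorphism onto its image. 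The main obstacle is producing the symmetric square root $B(u,v)$ smoothly in $(u,v)$ on a common neighbourhood of the origin; this is handled cleanly by the functional-calculus formula above, or alternatively by applying the inverse function theorem to the squaring map $X\mapsto X^2$ at the positive definite symmetric matrix $B(\mathbf 0,\mathbf 0)$. After that, the remainder of the argument is a routine sequence of applications of the implicit and inverse function theorems.
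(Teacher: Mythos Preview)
The paper does not actually prove this lemma; it merely states it and refers the reader to \cite[Sec.~4.44]{BG92}. So there is no ``paper's own proof'' to compare against. Your argument is correct and is essentially the standard proof: locate the fibrewise critical point $u^*(v)$ via the implicit function theorem, subtract off $h(v)=F(u^*(v),v)$, use Hadamard's lemma twice to write the remainder as a quadratic form $\langle A(u,v)u,u\rangle$ with $A(\mathbf 0,\mathbf 0)$ positive definite, and then take a smooth symmetric square root of $A$ to obtain the new coordinates. Each step is handled cleanly, including the smoothness of $B=A^{1/2}$ via the holomorphic functional calculus, and the verification that $\Gamma$ is a local diffeomorphism via its block-triangular Jacobian. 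The only cosmetic point is that $h$ and $\Gamma$ are of course only defined on neighbourhoods of the origin, not on all of $\RR^{d-J}$ or $\RR^d$, but this is already a looseness in the statement as written.
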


The second lemma reads as follows. 
\begin{lem}
	\label{LEMVOLUME1} Let $p\geq 1$ be an integer, and let $f:I\to\mathbb R$ be a smooth function where $I$ is an open interval containing $0$ and $f(x)\sim_0 x^p$. Then there exist $C>0$ and an open interval $I'\ni 0$ inside $I$ such that, for any $\tau\in\mathbb R$ and any $\delta>0$, the set $\left\{x\in I'\,:\,|f(x)-\tau|<\delta\right\}$ has Lebesgue measure less than $C\delta^{1/p}$. 
	\begin{proof}
		Assume first that $f(x)=x^p$. If $|\tau|\leq 2\delta$, then the result is clear. Otherwise, if $\tau\geq2\delta$, then $x$ has to live in $\big[(\tau-\delta)^{1/p},(\tau+\delta)^{1/p}\big]$ and the length of this interval may be easily estimated using the mean value theorem.
		
		The general case reduces to this one. For small values of $x$, set $y=[f(x)]^{1/p}$ if $p$ is odd or $y=[f(x)]^{1/p}$ for $x>0$, $y=-[f(x)]^{1/p}$ for $x<0$ if $p$ is even. In both cases, $y$ is differentiable at $0$ and $dy/dx>0$. Hence, $x=\gamma(y)$ where $\gamma$ is a smooth diffeomorphism. Now, for some small open interval $I'\ni 0$, we have
		\[\left\{x\in I'\,:\, |f(x)-\tau|<\delta\right\}=\left\{x\in I'\,:\, |\big(\gamma^{-1}(x)\big)^p-\tau|<\delta\right\}.\]
		Since $\gamma$ is a diffeomorphism, the latter set has Lebesgue measure less than $C\delta^{1/p}$. 
	\end{proof}
\end{lem}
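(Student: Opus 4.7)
The plan is to reduce the general case to the model $f(x) = x^p$ via a smooth change of variables, and then to handle the model case directly. For the model $f(x) = x^p$, I would distinguish two regimes. If $|\tau| \leq 2\delta$, the inequality $|x^p - \tau| < \delta$ forces $|x|^p < 3\delta$, so the set lies in an interval of length at most $2(3\delta)^{1/p}$. If $|\tau| > 2\delta$, then on the relevant set $x^p$ has the same sign as $\tau$, so $x$ is confined to at most two intervals of the form $[\alpha^{1/p}, \beta^{1/p}]$ with $\alpha \geq \delta$ and $\beta - \alpha = 2\delta$; writing $\beta^{1/p} - \alpha^{1/p} = \int_\alpha^\beta (s^{1/p - 1}/p)\, ds$ and bounding $s^{1/p - 1} \leq \delta^{1/p - 1}$ (valid since $1/p \leq 1$ and $s \geq \delta$) gives a length $\leq (2/p)\delta^{1/p}$. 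In both regimes the bound is $O(\delta^{1/p})$.

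To reduce the general case to this model, I would combine $f(x) \sim_0 x^p$ with an iterated application of Hadamard's lemma to factor $f(x) = x^p(1 + r(x))$ with $r$ smooth and $r(0) = 0$ (the vanishing of $f^{(k)}(0)$ for $k < p$ follows from the equivalence, and $f^{(p)}(0)/p! = 1$ gives the value at $0$). Since $1 + r(0) = 1 > 0$, the function $\psi(x) := x(1 + r(x))^{1/p}$ is smooth near $0$ with $\psi(0) = 0$ and $\psi'(0) = 1$. By the inverse function theorem, $\psi$ is a diffeomorphism on some neighbourhood $I' \ni 0$, and $|(\psi^{-1})'|$ is bounded by a constant $K$ on a slightly smaller subinterval. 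Since $f(x) = \psi(x)^p$, the set $\{x \in I' : |f(x) - \tau| < \delta\}$ is the $\psi$-preimage of $\{y : |y^p - \tau| < \delta\}$, and its Lebesgue measure differs from that of the image by at most the factor $K$; the model-case estimate then closes the argument.

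The main subtlety is that a direct substitution $y = f(x)^{1/p}$ is not smooth at $0$, since the $p$-th root is being applied to a quantity vanishing to order exactly $p$. Extracting the prefactor $x^p$ first, and only taking the $p$-th root of the non-vanishing smooth factor $1 + r(x)$, circumvents this obstacle and, in contrast to a case-by-case construction, treats both parities of $p$ uniformly.
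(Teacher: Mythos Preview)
Your proof is correct and follows the same two-step strategy as the paper: handle $f(x)=x^p$ by a case split on $|\tau|$ versus $\delta$, then reduce the general case via a diffeomorphism $\psi$ with $f=\psi^p$. Your Hadamard factorization makes the smoothness of $\psi$ explicit, but note that the paper's sign-adjusted $y=\pm f(x)^{1/p}$ is, once one writes $f(x)=x^p(1+r(x))$, exactly your map $\psi(x)=x(1+r(x))^{1/p}$, so the ``direct substitution'' you flag as problematic in fact coincides with your own construction.
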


\section{Proof of Theorem~\ref{thm:polynomials}} \label{sec:polyproof} We intend to apply Lemma~\ref{lem:compactTd}. Hence, let $w\in\TT^d$ with $\mathrm{Re}\ \Phi(w)=0$. By the rotational invariance of $\mathbf{m}_d$, we may always assume that $w=\mathbf 1$. Moreover, since the conditions in Lemma~\ref{lem:compactTd} are invariant by vertical translations, we may also assume that $\Phi(w)=0$. In this case we have
\[\Phi(z_1,\,z_2,\,\ldots,\,z_d) = \sum_{j=1}^d \Phi_j(z_j) = \sum_{j=1}^d \sum_{k} a_k^{(j)} (1-z_j)^k.\]
Since $\Phi$ is a minimal Bohr lift of $\varphi$, inspecting the proof of Lemma~\ref{lem:nonneglin}, we may conclude that in this case $a_1^{(j)}>0$ for every $j=1,\,2,\,\ldots,\,d$. This means we have 
\begin{align*}
	\mathrm{Re}\, \Phi\big(e^{i\theta_1},\,e^{i\theta_2},\,\ldots,\,e^{i\theta_d}\big) &= \sum_{j=1}^d b_j \theta_j^{k_j} + o\left(\theta_j^{k_j}\right), \intertext{where the coefficients $b_j\neq0$ are real numbers and the exponents $k_j\geq2$ are integers. The fact that this quantity is supposed to be non-negative implies that $b_j>0$ and that $k_j$ is even, by similar considerations as those in the proof of Lemma~\ref{lem:nonneglin}. Moreover} \mathrm{Im}\, \Phi\big(e^{i\theta_1},\,e^{i\theta_2},\,\ldots,\,e^{i\theta_d}\big) &= -\sum_{j=1}^d a_1^{(j)} \theta_j + o\big(\theta_j\big). 
\end{align*}
\begin{proof}
	[Proof of the first part of Theorem~\ref{thm:polynomials}] Let $\tau \in \mathbb{R}$ and $\varepsilon>0$ be arbitrary. The preceding discussion means there is some neighbourhood $\mathcal{U}\ni (1,1,\ldots,1)$ in $\mathbb{T}^d$ so that
	\[\frac{1}{2} \sum_{j=1}^d b_j \theta_j^{k_j} \leq \mathrm{Re}\, \Phi\big(e^{i\theta_1},\,e^{i\theta_2},\,\ldots,\,e^{i\theta_d}\big) \leq 2 \sum_{j=1}^d b_j \theta_j^{k_j},\]
	when $e^{i\theta} \in \mathcal{U}$. Hence if $\Phi\big(e^{i\theta}\big) \in Q(\tau,\varepsilon)$ and $e^{i\theta} \in \mathcal{U}$, we conclude from the real part that $|\theta_j| \ll \varepsilon^{1/k_j},$ for $j = 1,\,2,\,\ldots,\,d$. Now, fixing $\theta_j$ for $j = 2,\,\ldots,\,d$ we conclude from the imaginary part and Lemma \ref{LEMVOLUME1} that $\theta_1$ can live in an interval of size at most $C\varepsilon$. Hence we have
	\[\mathbf{m}_d \left(\left\{z \in \mathcal{U}_w \,: \, \Phi(z) \in Q(\tau,\varepsilon)\right\}\right) \ll_w \varepsilon^{1 + 1/k_2 + \cdots + 1/k_d}.\]
	In fact, we may choose
	\[\kappa_w = 1 + \sum_{j=1}^d \frac{1}{k_j} - \min_{1 \leq j \leq d} \frac{1}{k_j},\]
	and conclude by Lemma~\ref{lem:compactTd}, since $d \geq 2$. 
\end{proof}
\begin{proof}
	[Proof of the second part of Theorem~\ref{thm:polynomials}] In this case $d=1$, and the polynomial $\Phi(z)$ is of only one variable. We again consider some neighbourhood $\mathcal{U}\ni 1$ in $\mathbb{T}$, so that when $e^{i\theta}\in \mathcal{U}$ we have
	\[ 0 \leq \mathrm{Re}\,\Phi\big(e^{i\theta}\big) \leq 2b \theta^k \qquad \text{and} \qquad \big|\mathrm{Im}\,\Phi\big(e^{i\theta}\big)\big| \leq 2a|\theta|,\]
	where $a=a_1$, $b=b_1$ and $k\geq2$ is even. Now, we choose $\tau=0$ and observe that $\varphi\big(e^{i\theta}\big)$ belongs to $Q(\tau,\veps)$ provided $|\theta|\ll\veps$. Hence
	\[\mathbf{m}_1 \left(\left\{z \in \mathbb{T} \,: \, \Phi(z) \in Q(\tau,\varepsilon)\right\}\right) \geq \mathbf{m}_1\left(\left\{z \in \mathcal{U}_w \,: \, \Phi(z) \in Q(\tau,\varepsilon)\right\}\right) \gg \varepsilon,\]
	and $\mathcal{C}_\varphi$ cannot be compact by Lemma~\ref{lem:QS}. 
\end{proof}
\begin{rem}
	Inspecting the proof of Theorem~\ref{thm:polynomials}, we see that we may replace the polynomials $P_j$, by corresponding power series
	\[P_j\big(q_j^{-s}\big) = \sum_{k=0}^\infty c_k^{(j)} q_j^{-ks},\]
	provided $\sum_{k=0}^\infty \big|c_k^{(j)}\big| < \infty$. However, we still require the complex dimension $d$ to be finite. 
\end{rem}

\section{Proof of Theorem~\ref{thm:main}} \label{sec:mainproof} We begin by observing that the penultimate point of Theorem~\ref{thm:main} follows from the second part of Theorem~\ref{thm:polynomials}. Regarding the final part of Theorem~\ref{thm:main}, it is contained in the following result: 
\begin{lem}
	\label{lem:Cex3} There are polynomials $\varphi\in\mathcal G$ of any complex dimension and of any degree $\geq 3$ for which the corresponding composition operator $\mathcal {C}_\varphi$ is non-compact. 
	\begin{proof}
		Let $P(z) = P(z_1,z_2,\ldots,z_d)$ be any polynomial in $d$ variables and define
		\[\Phi(z) = (1-z_1)+\delta(1-z_1)^2P(z),\]
		for some $\delta>0$ to be decided later. We compute
		\[\mathrm{Re}\,\Phi\big(e^{i\theta_1},\ldots,e^{i\theta_d}\big) = (1-\cos{\theta_1})\left(1-2\delta\left(\cos{(\theta_1)}\mathrm{Re}\,P\big(e^{i\theta}\big)-\sin{(\theta_1)}\mathrm{Im}\,P\big(e^{i\theta}\big)\right)\right).\]
		Pick $\delta$ small enough so that we have
		\[\frac{1-\cos{\theta_1}}{2} \leq \mathrm{Re}\,\Phi\big(e^{i\theta_1},\ldots,e^{i\theta_d}\big) \leq 2(1-\cos{\theta_1}).\] 
The first inequality tells us that $\Phi$ is a minimal Bohr lift of
		\[\varphi(s)=\big(1-{p_1}^{-s}\big)+\delta\big(1-{p_1}^{-s}\big)^2P\big(p_1^{-s},\ldots,p_d^{-s}\big),\]
		with $\varphi \in \mathcal{G}$ having unrestricted range.  Using the second inequality and a Taylor expansion of $\mathrm{Im}\,\Phi$, we also get that near $\mathbf 1$, 
\begin{align*}
		\mathrm{Re}\,\Phi\big(e^{i\theta_1},\ldots,e^{i\theta_d}\big)&=\mathcal O\left(\theta_1^2\right), \\
		\mathrm{Im}\,\Phi\big(e^{i\theta_1},\ldots,e^{i\theta_d}\big)&=\mathcal O(\theta_1). 
	\end{align*}
		Similar considerations as in the proof of the second part of Theorem~\ref{thm:polynomials} allow us to conclude that $\mathcal C_{\varphi}$ is not compact. 
	\end{proof}
\end{lem}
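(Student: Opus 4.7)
The plan is to apply Lemma \ref{lem:QS}: it suffices to exhibit, for each integer $d \geq 2$ and each degree $k \geq 3$, a polynomial $\varphi \in \mathcal{G}$ of complex dimension $d$ and degree $k$ whose lifted measure $\mu_\varphi$ fails the vanishing Carleson condition in $\mathbb{C}_0$. The guiding heuristic is that the first part of Theorem \ref{thm:polynomials} succeeded because $\mathrm{Re}\,\Phi$ vanished to order at least two along two independent directions at the boundary zero; conversely, to force non-compactness I want the local behaviour of $\Phi$ at some boundary point to remain \emph{essentially one-dimensional}, while still accommodating high total degree and many prime factors through a perturbation that does not disturb this leading behaviour.

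Concretely, I would try
\[
\Phi(z) = (1-z_1) + \delta(1-z_1)^2\, P(z_1, \ldots, z_d),
\]
where $\delta>0$ is a small constant and $P$ is a polynomial tailored so that the complex dimension and degree of $\varphi$ come out to the prescribed values (a convenient choice when $k\geq d+1$ is the monomial $P(z)=z_1^{k-d-1}z_2\cdots z_d$, and a sum of monomials handles the remaining cases). The factor $(1-z_1)^2$ is the key design decision: it forces the perturbation to vanish to second order at $z_1=1$, so that the linearisation of $\Phi$ at the boundary point $\mathbf{1}$ is controlled entirely by $(1-z_1)$. Taking $\delta$ small enough should give $\mathrm{Re}\,\Phi(e^{i\theta}) \geq \tfrac{1}{2}(1-\cos\theta_1) \geq 0$ on $\mathbb{T}^d$, which by Kronecker's theorem upgrades to $\Phi(\mathbb{D}^d) \subset \mathbb{C}_0$ so that $\varphi \in \mathcal{G}$; the equality $\Phi(\mathbf{1}) = 0$ then ensures unrestricted range.

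For non-compactness, I would rerun the computation from the proof of the second part of Theorem~\ref{thm:polynomials} in this higher-dimensional setting. Substituting $z_j=e^{i\theta_j}$ and Taylor expanding near $\theta = \mathbf{0}$ yields
\[
\mathrm{Re}\,\Phi\big(e^{i\theta}\big) = \mathcal{O}\big(\theta_1^2\big), \qquad \mathrm{Im}\,\Phi\big(e^{i\theta}\big) = -\theta_1 + \mathcal{O}\big(\theta_1^2\big),
\]
with no dependence on $\theta_2,\ldots,\theta_d$ at leading order. Fixing $\tau=0$, taking $|\theta_1|\leq \veps/4$ and letting $(\theta_2,\ldots,\theta_d)$ range over any fixed small neighbourhood of $\mathbf{0}$ in $\mathbb{T}^{d-1}$ places $\Phi(e^{i\theta})$ inside $Q(0,\veps)$ for all sufficiently small $\veps$. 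This yields $\mu_\varphi(Q(0,\veps)) \gg \veps$, so $\mu_\varphi$ is not vanishing Carleson and Lemma~\ref{lem:QS} delivers non-compactness.

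The step I expect to be the main obstacle is bookkeeping rather than analysis: verifying that the above $\Phi$ really is an optimal Bohr lift in the sense of the introduction, so that the complex dimension of $\varphi$ equals $d$ and its degree equals $k$. This amounts to checking that no smaller $\mathbb{Q}$-independent generating set for the support of $\varphi$ exists and that no alternative $\Lambda_0$ of cardinality $d$ lowers $\deg(\varphi,\Lambda_0)$ below $k$, which I would handle by choosing the exponents of $P$ so that the resulting multi-indices span a lattice admitting no non-trivial common refinement.
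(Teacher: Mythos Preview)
Your proposal is correct and follows essentially the same approach as the paper: the same ansatz $\Phi(z)=(1-z_1)+\delta(1-z_1)^2P(z)$, the same smallness argument on $\delta$ to force $\mathrm{Re}\,\Phi(e^{i\theta})\asymp 1-\cos\theta_1$, and the same one-dimensional Taylor behaviour near $\mathbf 1$ feeding into the Carleson-square estimate from the second part of Theorem~\ref{thm:polynomials}. Your additional care about choosing $P$ so that the complex dimension and degree come out exactly as claimed, and about checking optimality of the Bohr lift, is a point the paper leaves implicit.
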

\begin{rem}
	The key point of Lemma~\ref{lem:Cex3} is that even if $\Phi$ involves $d$ variables, its local behaviour near $\mathbf 1$ depends too heavily on $z_1$ to ensure compactness. 
\end{rem}

Having now concluded the negative parts of Theorem~\ref{thm:main}, we turn to the positive parts. Let us fix a polynomial $\varphi \in \mathcal G$ and let $\Phi$ denote a minimal Bohr lift of $\varphi$. We can simplify how to write $\Phi$ around a point $w\in\TT^d$ such that $\mathrm{Re}\,\Phi(w)=0$. Without loss of generality, we may again assume that $w=\mathbf 1$ and that $\Phi(w)=0$. Then we may write
\[\Phi(z) = \sum_{j=1}^d a_j (1-z_j) + \sum_{j=1}^d b_j(1-z_j)^2 + \sum_{1 \leq j < k \leq d} c_{j,k}(1-z_j)(1-z_k)+o\left(\sum_{1\leq j\leq d}|1-z_j|^2\right).\]
We let $z_j = e^{i\theta_j}$ and since $a_j \geq 0$ by Lemma~\ref{lem:nonneglin} we get 
\begin{align*}
	\mathrm{Re}\,\left(\Phi(z)\right) &= \sum_{j=1}^d \left(\frac{a_j}{2}-\mathrm{Re}\,(b_j)\right)\theta_j^2-\sum_{1 \leq j < k \leq d} \mathrm{Re}\,(c_{j,k})\theta_j\theta_k + o\left(\sum_{1\leq j\leq d}\theta_j^2\right). \intertext{The quadratic form appearing above is brought to standard form by a linear change of variables,} \mathrm{Re}\,\left(\Phi(z)\right)&= \sum_{j=1}^d \left(\ell_j(\theta)\right)^2 +o\left(\sum_{1\leq j\leq d}\theta_j^2\right). \intertext{Next, we write} \mathrm{Im}\left(\Phi(z)\right) &= - \sum_{j=1}^{d} a_j \theta_j + o\left(\sum_{j=1}^d |\theta_j|\right) = -\ell_{d+1}(\theta_j) + o\left(\sum_{j=1}^d |\theta_j|\right), 
\end{align*}
and by Lemma~\ref{lem:nonneglin} we know that $\ell_{d+1}\not\equiv0$, since at least one $a_j>0$. The last step to finish the proof of Theorem \ref{thm:main} is the following result:
\begin{lem}
	\label{PROPMAIN} Let $\Phi:\DD^d\to\CC_0$ be an optimal Bohr lift of $\varphi \in \mathcal{G}$, where $\varphi$ has unrestricted range and complex dimension $d\geq2$. Suppose that $w\in\mathbb T^d$ is such that $\mathrm{Re}\,\Phi(w)=0$. Then there exist a neighbourhood $\mathcal U_w\ni w$ in $\mathbb{T}^d$, $\kappa=\kappa_w>1$ and $C=C_w>0$ such that, for any $\tau\in\mathbb R$ and for every $\veps>0$,
	\[\mathbf{m}_d \left(\left\{z\in \mathcal U_w\,:\, \Phi(z)\in Q(\tau,\veps)\right\}\right)\leq C\veps^\kappa\]
	in the following cases: 
	\begin{itemize}
		\item $J(\Phi,w)\geq 1$ and $\ell_{d+1}$ is independent from $(\ell_1,\ldots,\ell_J)$. We may choose $\kappa=1+J(\Phi,w)/2$. 
		\item $J(\Phi,w)\geq 2$ and $\ell_{d+1}$ belongs to $\mathrm{span}(\ell_1,\ldots,\ell_J)$. We may choose $\kappa=(1+J(\Phi,w))/2$. 
		\item $J(\Phi,w)=1$, $\ell_{d+1}$ is a multiple of $\ell_1$ and $\Phi$ has degree $2$. We may choose $\kappa=9/8$. 
		\item $J(\Phi,w)=0$ and $\Phi$ has degree $2$. We may choose $\kappa=(d+3)/4$. 
	\end{itemize}
\end{lem}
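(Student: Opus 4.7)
By rotating $\TT^d$ and translating vertically I may assume $w=\mathbf 1$ and $\Phi(\mathbf 1)=0$. After a linear change of coordinates in $\theta$ which brings the Hessian of $\mathrm{Re}\,\phi$ at $0$ to the block form $\mathrm{diag}(2I_J,\,0)$, I split $\theta=(\tilde u,\tilde v)\in\RR^J\times\RR^{d-J}$ and apply the parametrized Morse Lemma to $F=\mathrm{Re}\,\phi(\tilde u,\tilde v)$, obtaining a diffeomorphism $(\tilde u,\tilde v)\mapsto (u,v):=(\gamma(\tilde u,\tilde v),\tilde v)$ on a neighborhood $\mathcal V$ of the origin, with Jacobian bounded above and below, such that $\mathrm{Re}\,\phi(\tilde u,\tilde v)=|u|^2+h(v)$ with $h\geq 0$ and $h(0)=0$. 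In these coordinates the linearization at $0$ of $\mathrm{Im}\,\phi$ takes the form $\alpha\cdot u+\beta\cdot v$ for some $(\alpha,\beta)\in\RR^J\times\RR^{d-J}$, and the four sub-cases of the lemma correspond respectively to: Case 1, $\beta\neq 0$; Case 2, $\beta=0$ and $\alpha\neq 0$ (with $J\geq 2$); Case 3, $J=1$ and $\beta=0$; Case 4, $J=0$. After absorbing the bounded Jacobian and the shift $\tau$, the measure to be bounded is
\[\mathbf{m}_d\bigl(\bigl\{(u,v)\in\mathcal V\,:\, |u|^2+h(v)\leq\veps\ \text{and}\ |\alpha\cdot u+\beta\cdot v+\rho(u,v)-\tau|\leq\veps/2\bigr\}\bigr),\]
where $\rho(u,v)=o(|u|+|v|)$.

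\textbf{Cases 1 and 2.}
In both cases the real-part constraint forces $|u_j|\leq C\sqrt{\veps}$ for all $j=1,\ldots,J$, confining $u$ to a box of volume $\ll\veps^{J/2}$. In Case 1, pick $j_0$ with $\beta_{j_0}\neq 0$; fixing the remaining coordinates, $v_{j_0}\mapsto\mathrm{Im}\,\phi$ has nonzero derivative at $0$, so by Lemma~\ref{LEMVOLUME1} (with $p=1$) $v_{j_0}$ is confined to an interval of length $\ll\veps$. Integrating over the remaining (bounded) $v$-coordinates and the $u$-box yields the bound $\veps^{1+J/2}$. In Case 2, $\beta=0$ but some $\alpha_{j_0}\neq 0$, for otherwise $\ell_{d+1}$ would vanish identically, contradicting Lemma~\ref{lem:nonneglin}; the imaginary constraint then pins $u_{j_0}$ to an interval of length $\ll\veps$, while the remaining $J-1$ coordinates $u_j$ stay within intervals of length $\ll\sqrt{\veps}$, for a total of $\veps\cdot\veps^{(J-1)/2}=\veps^{(J+1)/2}$. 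The hypothesis $J\geq 2$ in Case 2 is precisely what makes $\kappa>1$.

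\textbf{Cases 3 and 4.}
In these cases the linear-refinement argument above gives only $\kappa=1$. The crucial extra input is the assumption $\deg(\Phi)=2$, which controls the remainder $h(v)$ through a quartic-type lower bound on $h$ in its degenerate directions; this is the content of the key lemma (Lemma~\ref{lem:keylemma}), to be proven in Section~\ref{sec:lemmaproof}. Granting it: in Case 4 ($J=0$) I use the imaginary constraint to pin one $v$-coordinate to an interval of length $\ll\veps$ via Lemma~\ref{LEMVOLUME1}, and the quartic lower bound on $h$ in the remaining $d-1$ directions supplies a factor $\veps^{(d-1)/4}$, for a total of $\veps^{(d+3)/4}$. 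In Case 3 ($J=1$, $\beta=0$) the imaginary constraint can only refine the single $u$-variable, and a more delicate balancing---again extracted from the key lemma---between the $u$-direction and the degenerate $v$-directions yields the exponent $\kappa=9/8$.

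\textbf{Main obstacle.}
The genuine work is entirely in the key lemma: extracting a sharp quartic-type lower bound on $h(v)$ from the non-negativity of $\mathrm{Re}\,\Phi$ on $\TT^d$ together with the fact that $\Phi$ has polynomial degree $2$. This requires a careful analysis of how the coefficients of $\Phi$, written in the form \eqref{eq:newform}, constrain the higher-order expansion of $\mathrm{Re}\,\phi$ at a boundary zero. My plan is to treat it here as a black box and dedicate Section~\ref{sec:lemmaproof} to its proof. By contrast, Cases 1 and 2 will follow directly from the Morse normal form and Lemma~\ref{lem:nonneglin}, with no quantitative input from the degree hypothesis required.
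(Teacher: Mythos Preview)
Cases~1 and~2 are correct and match the paper. Case~4 is right in outline, but the quartic bound there does \emph{not} come from Lemma~\ref{lem:keylemma}: the paper shows directly that $J=0$ together with $\deg\Phi=2$ and non-negativity of $\mathrm{Re}\,\phi$ force $b_j=a_j/2$ and $c_{j,k}=0$, so that $\Phi$ separates as $\sum_j\bigl(a_j(1-z_j)+\tfrac{a_j}{2}(1-z_j)^2\bigr)$; optimality of the Bohr lift then gives each $a_j>0$, whence $\mathrm{Re}\,\phi\sim\tfrac14\sum a_j\theta_j^4$ and the proof of Theorem~\ref{thm:polynomials} yields $(d+3)/4$. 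This is a short self-contained computation, entirely separate from the key lemma.

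The genuine gap is Case~3. Your description of Lemma~\ref{lem:keylemma} as supplying ``a quartic-type lower bound on $h(v)$'' is incorrect and would fail here: after the Morse normal form in the $J=1$, $d=2$ situation, the remainder $h(v)$ may well vanish identically. The actual key lemma is a \emph{non-factorizability} statement: for the relevant degree-$2$ map $\Phi:\DD^2\to\CC_0$ one cannot have simultaneously $\mathrm{Re}\,\phi=\gamma^2$ and $\mathrm{Im}\,\phi=\gamma\cdot(\text{smooth})$ to high order unless $\Phi(z)=\tfrac12(1-z_1z_2)$, and that map is excluded precisely because it is not an \emph{optimal} Bohr lift. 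Concretely, either the Morse remainder in $v$ has a nonzero term of order $\leq5$ (hence even order $\leq4$, giving $\kappa=1+1/p\geq5/4$), or the pure-$v$ part of $\mathrm{Im}\,\phi$ has a nonzero term of order $p\leq4$ (giving $\kappa=1+1/(2p)\geq9/8$); the value $9/8$ comes from this second branch, not from any bound on $h$. Your outline misses this dichotomy and the essential role of optimality in ruling out the exceptional $\tfrac12(1-z_1z_2)$. You also omit the reduction from $d\geq3$ to $d=2$ (Lemma~\ref{LEMD3} in the paper), which freezes $z_3,\ldots,z_d$ near $1$, applies the two-variable case to each slice $\Phi(\ldots,z_j,\ldots,z_k,\ldots)$, and then uses optimality once more to show these slices cannot \emph{all} be of the exceptional form.
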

Before we prove the different cases of this lemma, let us make some comments. Firstly, it is clear that Lemma~\ref{PROPMAIN} and Lemma~\ref{lem:compactTd} imply Theorem~\ref{thm:main} when the degree of $\varphi$ is at least 2. When the degree of $\varphi$ is equal to 1, then
\[\Phi(z)=\sum_{j=1}^d a_j(1-z_j)\]
so that each $a_j$ is positive. This implies that $J(\Phi,w)=d$ so that we may again apply Lemma~\ref{PROPMAIN} and Lemma~\ref{lem:compactTd}.

It is also important to notice that $\Phi$ cannot be an arbitrary polynomial mapping of $\DD^d$ into $\CC_0$. It is an optimal Bohr lift of some $\varphi\in\mathcal G$ with complex dimension $d$. In particular, we shall use that $\frac{
\partial \Phi}{
\partial z_j}\not\equiv 0$ for every $1\leq j\leq d$. Moreover, the polynomial $\Phi(z)=\lambda(1-z_1z_2)$ is not an optimal Bohr lift. Otherwise, it would arise from $\varphi(s)=\lambda(1-q_1^{-s}q_2^{-s})$, but the optimal Bohr lift of $\varphi$ is $\lambda(1-z)$.

We are now ready for the proof of Lemma~\ref{PROPMAIN}. By similar considerations as before, we may again assume that $w=\mathbf 1$ and that $\Phi(w)=0$. We write $J$ for $J(\Phi,w)$.

\subsection*{The case $J=0$} This implies that
\[\frac{a_j}{2}-\mathrm{Re}\,(b_j) = \mathrm{Re}\,(c_{j,k}) = 0,\]
for $j,k=1,\,\ldots,\,d$. We set $z_j = e^{i\theta_j}$ and compute 
\begin{align*}
	\mathrm{Re}\,\left(a_j(1-z_j)\right) &= a_j(1-\cos{\theta_j}) \\
	\mathrm{Re}\,\left(b_j(1-z_j)^2\right) &= -a_j\cos{\theta_j}(1-\cos{\theta_j})+2\mathrm{Im}(b_j)\sin{\theta_j}(1-{\cos{\theta_j}}) \\
	\mathrm{Re}\,\left(c_{j,k}(1-z_j)(1-z_k)\right) &= \mathrm{Im}\left(c_{j,k}\right)\left(\sin{\theta_j}\left(1-\cos{\theta_k}\right)+\sin{\theta_k}(1-\cos{\theta_j})\right) 
\end{align*}
This means that
\[\mathrm{Re}\,\left(\Phi(z)\right) = \sum_{j=1}^d \mathrm{Im}(b_j)\theta_j^3 + \sum_{1 \leq j < k \leq d} \frac{\mathrm{Im}\left(c_{j,k}\right)}{2}\left(\theta_j\theta_k^2 + \theta_k\theta_j^2\right) + o\left(\sum_{j=1}^d |\theta_j|^3\right).\]
However, the non-negativity of $\mathrm{Re}\,\Phi$ then implies that $\mathrm{Im}(b_j)=\mathrm{Im}\left(c_{j,k}\right)=0$. Hence we in total have $b_j = a_j/2$ and $c_{j,k}=0$, which means
\[\Phi(z) = \sum_{j=1}^d \left(a_j(1-z_j) + \frac{a_j}{2}(1-z_j)^2\right).\]
In fact, this means that $a_j>0$ for every $j$, by the assumption that the complex dimension is $d$ and Lemma~\ref{lem:nonneglin}. We may now use (the proof of) Theorem~\ref{thm:polynomials} to conclude that there exists a neighbourhood $\mathcal U_w\ni w$ such that
\[\mathbf{m}_d\left(\left\{z\in\mathcal U_w\,:\, \Phi(z)\in Q(\tau,\varepsilon)\right\}\right) \ll \varepsilon \times \varepsilon^\frac{d-1}{4} = \varepsilon^\frac{d+3}{4},\]
since we now have
\[\mathrm{Re}\, \Phi\big(e^{i\theta_1},\ldots,e^{i\theta_d}\big)=\frac{1}{4}\sum_{j=1}^d a_j \theta_j^4+o\left(\theta_j^4\right),\]
and we are done with this case. \qed

\subsection*{The case $J\geq1$ and independence} After a linear change of variables, we may write $\mathrm{Re}\,\phi$ and $\mathrm{Im}\,{\phi}$ as 
\begin{eqnarray*}
	\mathrm{Re}\,\phi(\theta_1,\ldots,\theta_d)&=&u_1^2+\cdots+u_J^2+o\left(\sum_{j=1}^du_j^2\right)\\
	\mathrm{Im}\,\phi(\theta_1,\ldots,\theta_d)&=&u_{d}+o\left(\sum_{j=1}^d |u_j|\right) 
\end{eqnarray*}
Since a linear change of variables does not change the value of the volume up to constants, we may assume that $\phi$ depends on $(u,v)$ with $u=(u_1,u_2,\ldots,u_J)$ and $v=(u_{J+1},\ldots,u_d)$. Applying the parametrized Morse lemma to $\mathrm{Re}\,\phi$, we may write 
\begin{align*}
	\mathrm{Re}\, \phi(u,v)&=\gamma_1(u,v)^2+\cdots+\gamma_J(u,v)^2+h(v). \intertext{We also apply the change of variables $(u,v)\mapsto \Gamma(u,v)$ to $\mathrm{Im}\,\phi$ and since $\Gamma_d(u,v)=u_d$, we find} \mathrm{Im}\, \phi(u,v)&=u_{d}+g(\Gamma(u,v)), 
\end{align*}
where $g$ is a smooth function defined on $\mathcal{V}$ such that $
\partial g/
\partial u_{d}(\mathbf 0)=0.$

Now, we know that $\mathrm{Re}\,\phi(u,v)\geq 0$ for every $(u,v)\in \RR^d$. Since $\Gamma$ is a diffeomorphism, $v$ can take any value in some neighbourhood of zero in $\RR^{d-J}$ even if we require that
\[\gamma_1(u,v)=\gamma_2(u,v) =\cdots=\gamma_J(u,v)=0,\]
and hence $h(v)\geq 0$. 

This implies that we may find some neighbourhood $\mathcal W \ni \mathbf{0}$ in $\mathcal V$ such that, for every $\tau\in\mathbb R$ and every $\veps>0$,
\[(u,v)\in\mathcal W\,\quad \text{and}\quad \phi(u,v)\in Q(\tau,\veps) \qquad \implies\qquad |\gamma_j(u,v)|\leq \veps^{1/2}.\]
Now, for if we fix $\gamma_1(u,v),\,\ldots\,,\gamma_{d-1}(u,v)$, it follows from Lemma~\ref{LEMVOLUME1} with $p=1$ that $\gamma_d(u,v)=u_d$ has to belong to some interval of size $C\veps$, provided that $(u,v)$ is sufficiently close to $\mathbf 0$. This means that there exists a neighbourhood $\mathcal O\subset\mathcal W$ of $\mathbf 0$ such that
\[\left\{(u,v)\in\mathcal O \,:\, \phi(u,v)\in Q(\tau,\veps)\right\}\subset\big\{(u,v)\in\mathcal O\,:\, \Gamma(u,v)\in R(\tau,\veps)\big\},\]
where the volume of $R(\tau,\veps)$ is less than $C\veps^{1+\frac J2}$. Since $\Gamma$ is a diffeomorphism, we are done. \qed

\subsection*{The case $J\geq2$ and dependence} With a similar linear change of variables as in the previous case, we may write 
\begin{eqnarray*}
	\mathrm{Re}\,\phi(u_1,\,\ldots\,,u_d)&=&u_1^2+\cdots+u_J^2+o\left(\sum_{j=1}^du_j^2\right)\\
	\mathrm{Im}\,\phi(u_1,\,\ldots\,,u_d)&=&\sum_{j=1}^J \alpha_j u_{j}+o\left(\sum_{j=1}^d |u_j|\right) 
\end{eqnarray*}
We use again the parametrized Morse lemma with $\mathrm{Re}\, \phi$, and it is again easy to show that $\gamma_j(u,v)=u_j+o\left(\sum_{j=1}^d |u_j|\right)$ so that $$\mathrm{Im}\, \phi(u,v)=\sum_{j=1}^J \alpha_j \gamma_j(u,v)+g(\Gamma(u,v))$$ with ${
\partial g}/{
\partial u_j}(\mathbf 0)=0$ for $j=1,\ldots,d$.

We argue as in the previous case. For every $j=2,\ldots,J$, for any $\tau\in\mathbb R$ and every $\veps>0$,
\[(u,v)\in\mathcal W\subset\mathcal V\quad \text{and} \quad \phi(u,v)\in Q(\tau,\veps)\qquad \implies \qquad |\gamma_j(u,v)|\leq \veps^{1/2}.\]
Now, for a fixed value of $\gamma_2(u,v),\ldots,\gamma_{d}(u,v)$, it is again clear that $\gamma_1(u,v)$ has to belong to some interval of size $C\veps$, provided $(u,v)$ is sufficiently close to $\mathbf 0$. This means that there exists a neighbourhood $\mathcal O \ni \mathbf{0}$ in $\mathcal W$ such that
\[\left\{(u,v)\in\mathcal O\,:\, \phi(u,v)\in Q(\tau,\veps)\right\}\subset\big\{(u,v)\in\mathcal O\,:\,\Gamma(u,v)\in R(\tau,\veps)\big\},\]
where the volume of $R(\tau,\veps)$ is less than $C\veps^{1+\frac {J-1}2}$. We conclude as in the previous step. \qed

\subsection*{The case $J=1$ and dependence, $d=2$} This is the most difficult case. At first, we do not assume that $d=2$ but we always assume that the degree of $\varphi$ is equal to 2. We know that there is constant $\lambda\in\mathbb R^*$ so that $\ell_1(\theta) = \lambda \ell_{d+1}(\theta)$, which means
\[\sqrt{\frac{a_j}{2}-\mathrm{Re}\,(b_j)}=\lambda a_j, \qquad 1\leq j\leq d\]
and that $\lambda>0$ by the computations in the beginning of this section. We normalize $\Phi(z)$ as $\lambda^{-2}\Phi(z)$, so that we may assume that $\lambda=1$. Hence
\[\ell_1(\theta) = \sum_{j=1}^d a_j\theta_j,\]
and this immediately implies that 
\begin{equation}
	\label{eq:relationsJ1} \mathrm{Re}\,(b_j) = \frac{a_j}{2}-a_j^2 \qquad \text{ and } \qquad \mathrm{Re}\,(c_{j,k}) = -2a_j a_k,\qquad 1\leq j,k\leq d. 
\end{equation}
Suppose that $a_1=0$. Then $\mathrm{Re}\,(b_1)=0$ and $\mathrm{Re}\,(c_{1,k})=0$ for $2 \leq k \leq d$. We compute
\[\mathrm{Re}\,\big(\Phi(e^{ix},1,\ldots,1)\big) = -2\mathrm{Im}(b_1)\sin{x}(1-\cos{x})\geq0,\]
which means that $\mathrm{Im}(b_1)=0$, so that $b_1=0$. Next we compute 
\begin{align*}
	\Phi\big(e^{ix},e^{iy},1\ldots,1\big) &= a_2(1-\cos{y}) + \left(\frac{a_2}{2}-a_2^2\right)\left(1-2\cos{y}+\cos{2y}\right) \\
	&\qquad\qquad\qquad\qquad- \mathrm{Im}(c_{1,2})\big(-\sin{x}-\sin{y}+\sin(x+y)\big) \\
	&= (1-\cos{y})\left(a_2(1-\cos{y})+2a_2^2\cos{y}+ \mathrm{Im}(c_{1,2})\sin{x}\right) \\
	&\qquad\qquad\qquad\qquad+ \mathrm{Im}(c_{1,2})\sin{y}(1-\cos{x}). 
\end{align*}
Taking $y = \pm \delta$ for small enough $\delta$, we obtain that $\mathrm{Im}(c_{1,2})=0$. There is nothing special about $z_2$, and hence we conclude that $\mathrm{Im}(c_{1,k})=0$, for $2 \leq k \leq d$. In particular, $c_{1,k}=0$ for the same values of $k$. But this is impossible, since the variable $z_1$ no longer appear in our polynomial. Hence the assumption that $a_1=0$ must be wrong. 

Arguing in the same way, we have that $a_j > 0$ for $1 \leq j \leq d$. Moreover, after renaming the variables, we may suppose $a_1 \geq a_2 \geq \cdots a_d >0$. Finally,
\[0 \leq \mathrm{Re}\,\left(\Phi(-1,1,\ldots,1)\right)= 2a_1 + 4\left(\frac{a_1}{2}-a_1^2\right)\qquad \implies\qquad a_1 \leq 1,\]
so without loss of generality, we may assume that
\[1\geq a_1 \geq a_2 \geq \cdots \geq a_d > 0.\]

From now on, we assume that $d=2$ and that $1\geq a_1\geq a_2>0$. We need the following lemma. 
\begin{lem}
	\label{lem:a2a1} We have $a_2\leq 1-a_1$. 
	\begin{proof}
		We compute $$\Phi(-1,-1)=-4a_1^2-4a_2^2-8a_1a_2+4a_1+4a_2=4(a_1+a_2)(1-a_1-a_2).$$ Since this has to be non-negative, we get the result. 
	\end{proof}
\end{lem}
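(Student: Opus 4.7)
My plan is to obtain the inequality by evaluating $\mathrm{Re}\,\Phi$ at the single boundary point $(-1,-1) \in \mathbb{T}^2$, which is as far from $\mathbf{1}$ as possible, and then to exploit the constraint $\mathrm{Re}\,\Phi \geq 0$ together with the rigid algebraic relations \eqref{eq:relationsJ1}. The crucial observation is that since $\varphi$ has degree $2$, the local expansion of $\Phi$ around $\mathbf{1}$ written earlier is in fact the \emph{exact} expression for $\Phi$ on all of $\mathbb{D}^2$ (no higher order terms and no error $o(\cdot)$ term). So we can write, without any remainder,
\[\Phi(z_1,z_2) = a_1(1-z_1) + a_2(1-z_2) + b_1(1-z_1)^2 + b_2(1-z_2)^2 + c_{1,2}(1-z_1)(1-z_2).\]

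Setting $z_1 = z_2 = -1$ gives $1-z_j = 2$ for $j=1,2$, so
\[\Phi(-1,-1) = 2a_1 + 2a_2 + 4b_1 + 4b_2 + 4c_{1,2}.\]
Taking real parts and substituting $\mathrm{Re}(b_j) = a_j/2 - a_j^2$ and $\mathrm{Re}(c_{1,2}) = -2a_1 a_2$ from \eqref{eq:relationsJ1}, a straightforward simplification collapses the expression to $4(a_1+a_2) - 4(a_1+a_2)^2 = 4(a_1+a_2)(1-a_1-a_2)$. Since $\Phi$ maps $\mathbb{D}^2$ into $\overline{\mathbb{C}_0}$ by continuity, $\mathrm{Re}\,\Phi(-1,-1) \geq 0$, and since we already know $a_1 + a_2 > 0$, this forces $1 - a_1 - a_2 \geq 0$, which is the claim.

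There is essentially no obstacle here: the entire argument is a one-point evaluation. The only thing one has to be careful about is to use the degree $2$ hypothesis to justify that no cubic or higher terms contribute at $(-1,-1)$ — otherwise the same computation would only give an inequality up to an error term. With that in hand, the identity $\mathrm{Re}\,\Phi(-1,-1) = 4(a_1+a_2)(1-a_1-a_2)$ is a purely algebraic consequence of \eqref{eq:relationsJ1}.
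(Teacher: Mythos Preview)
Your proof is correct and is essentially identical to the paper's: both evaluate $\Phi$ at $(-1,-1)$, use the relations \eqref{eq:relationsJ1} to obtain $\mathrm{Re}\,\Phi(-1,-1)=4(a_1+a_2)(1-a_1-a_2)$, and conclude from non-negativity. Your write-up is slightly more explicit in distinguishing $\Phi(-1,-1)$ from its real part and in invoking the degree-$2$ hypothesis to justify the exact (remainder-free) expansion, but the argument is the same.
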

\begin{rem}
	Lemma~\ref{lem:a2a1} immediately implies that $a_1 \in (0,1)$ and $a_2 \in (0,1/2]$ by the assumptions that $0 < a_2 \leq a_1 \leq 1$. 
\end{rem}

Let us apply the change of variables $\theta_1=a_2u+a_2v$, $\theta_2=a_1u-a_1v$ to $\phi$: 
\begin{eqnarray}
	\mathrm{Re}\,\phi(u,v)&=&-4a_1^2a_2^2 u^2+o(u^2+v^2) \label{EQJ1-3}\\
	\mathrm{Im}\,\phi(u,v)&=&2a_1a_2u+o(|u|+|v|) \label{EQJ1-4}. 
\end{eqnarray}
As before, we intend to apply the parametrized Morse lemma to $\mathrm{Re}\,\phi$. Setting $\Psi=\Gamma^{-1}$, we get that, around $\mathbf 0$, 
\begin{eqnarray*}
	\mathrm{Re}\, \phi\circ\Psi(u,v)&=&u^2+h(v)\\
	\mathrm{Im}\, \phi\circ\Psi(u,v)&=&u+g(u,v) 
\end{eqnarray*}
with $h$ and $g$ smooth functions which have no terms of order 1 at $\mathbf 0$. 

Assume first that $h\not \equiv 0$. Let $p\geq 2$ be such that $h(v)\sim_0 \alpha_p v^p$ with $\alpha_p\neq 0$. Because $\phi\circ\Psi$ maps $\mathbb{R}^2$ into $\overline{\CC_0}$, we must have that $\alpha_p>0$ and that $p$ is even. Now, if $\phi\circ\Psi(u,v)\in Q(\tau,\veps)$ with $(u,v)$ sufficiently close to $\mathbf 0$, then $0\leq h(v)\leq \veps$ which implies by Lemma~\ref{LEMVOLUME1} that $v$ belongs to some set of measure less than $C\veps^{1/p}$. Moreover, for a fixed value of $v$, a look at the imaginary part and Lemma~\ref{LEMVOLUME1} yield that $u$ has to belong to some interval of size $C \veps$ and thus we are done with $\kappa=1+1/p$.

Thus, we are lead to study what happens if $h\equiv0$. The situation is easier if the Taylor expansion of $g(u,v)$ admits some term in $v^p$. In that case, we may write
\[\mathrm{Im}\, \phi\circ\Psi(u,v)=ug_1(u,v)+v^p g_2(v)\]
with smooth functions $g_1$ and $g_2$, such that $g_1(0,0)=1$ and $g_2(0)\neq 0$. If $\phi\circ\Psi(u,v)$ belongs to $Q(\tau,\veps)$, we conclude from the real part that then $|u|\leq\veps^{1/2}$, and from the imaginary part, we get that, near $\mathbf 0$,
\[|v^p g_2(v)-\tau|\leq C\veps^{1/2}.\]
By appealing again to Lemma~\ref{LEMVOLUME1}, we conclude that $v$ belongs to some set of Lebesgue measure less than $C\veps^{1/2p}$. For a fixed value of $v$, we look once more at the imaginary part, and obtain that $u$ must belongs to some interval of size $C\veps$. Hence, we are done with $\kappa=1+1/(2p)$.

Therefore, it remains to show that we will always fall into one of the previous cases and compute the value of $p$. We again recall that the polynomial
\[\Phi(z)=\lambda(1-z_1z_2),\]
is a contradiction to the fact that $\Phi$ is a minimal Bohr lift of $\varphi \in \mathcal{G}$. More precisely, we are reduced to proving the following lemma.
\begin{lem}
	\label{lem:keylemma} Let $0 < a_2 \leq a_1\leq1$ and $a_2 \leq 1- a_1$. Suppose that $\Phi:\DD^2\to\CC_0$ is the polynomial 
	\begin{equation}
		\Phi(z)=a_1(1-z_1)+a_2(1-z_2)+b_1(1-z_1)^2+b_2(1-z_2)^2+c(1-z_1)(1-z_2), \label{EQJ1-5} 
	\end{equation}
	where
	\[\mathrm{Re}(b_1)=\frac{a_1}2-a_1^2,\qquad \mathrm{Re}(b_2)=\frac{a_2}2-a_2^2 \qquad \text{and} \qquad \mathrm{Re}(c)=-2a_1a_2.\]
	Set $\theta_1=a_2u+a_2v$, $\theta_2=a_1u-a_1v$ and
	\[\phi(u,v)=\Phi\big(e^{i\theta_1},e^{i\theta_2}\big).\]
	Then there does not exist smooth maps $\gamma:\RR^2\to\RR$ and $h:\RR^2\to\RR$ so that 
	\begin{eqnarray}
		\mathrm{Re}\,\phi(u,v)&=&\gamma(u,v)^2 \label{EQJ1-1}\\
		\mathrm{Im}\,\phi(u,v)&=&\gamma(u,v)h(u,v)\label{EQJ1-2} 
	\end{eqnarray}
	except if $\Phi(z)=\frac{1}{2}(1-z_1z_2).$ More precisely, if $\Phi(z)\neq \frac 12(1-z_1z_2)$, for any smooth maps $\gamma:\RR^2\to \RR$ and $h:\RR\to\RR$, then 
	\begin{itemize}
		\item either the Taylor series of $\mathrm{Re}\,\phi-\gamma^2$ at $\mathbf 0$ has a non-zero term of order $\leq5$, 
		\item or the Taylor series of $\mathrm{Im}\,\phi-\gamma\cdot h$ at $\mathbf 0$ has a non-zero term of order $\leq4$. 
	\end{itemize}
\end{lem}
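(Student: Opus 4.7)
The strategy is to convert the factorization requirement into a single-variable vanishing statement along the zero set of $\mathrm{Im}\,\phi$, and then to exploit the rigid algebraic structure of $\Phi$ to pin down the parameters. Since $\partial_u\mathrm{Im}\,\phi(\mathbf 0)=2a_1a_2\neq 0$ while $\partial_v\mathrm{Im}\,\phi(\mathbf 0)=0$ by \eqref{EQJ1-4}, the implicit function theorem provides a smooth function $\psi$ with $\psi(0)=0$ and $\psi(v)=O(v^2)$ such that $\mathrm{Im}\,\phi(u,v)=0$ is locally equivalent to $u=\psi(v)$. Suppose smooth $\gamma$ and $h$ exist with $\mathrm{Re}\,\phi-\gamma^2=O(|\cdot|^6)$ and $\mathrm{Im}\,\phi-\gamma h=O(|\cdot|^5)$. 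A leading-order comparison forces $\gamma(u,v)=\pm 2a_1a_2\,u+O(|\cdot|^2)$ and $h(\mathbf 0)=\mp 1\neq 0$. Restricting the second factorization to $u=\psi(v)$ and using $h(\mathbf 0)\neq 0$ yields $\gamma(\psi(v),v)=O(v^5)$, after which restricting the first factorization gives
\[
\mathrm{Re}\,\phi(\psi(v),v)=\gamma(\psi(v),v)^2+O(v^6)=O(v^6).
\]

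Next, I would compute the Taylor expansions of $\mathrm{Re}\,\phi$ to order $5$ and of $\mathrm{Im}\,\phi$ to order $4$ explicitly, using the standard expansions of $1-\cos\theta$ and $\sin\theta$, the substitution $\theta_1=a_2(u+v)$, $\theta_2=a_1(u-v)$, and the constraints $\mathrm{Re}(b_j)=a_j/2-a_j^2$ and $\mathrm{Re}(c)=-2a_1a_2$. Writing $\alpha_j:=\mathrm{Im}(b_j)$ and $\beta:=\mathrm{Im}(c)$, solving $\mathrm{Im}\,\phi(\psi(v),v)=0$ order by order produces explicit rational expressions for $\psi_2,\psi_3,\psi_4$ in the five real parameters $(a_1,a_2,\alpha_1,\alpha_2,\beta)$. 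Substituting into $\mathrm{Re}\,\phi$ and requiring the coefficients of $v^3$, $v^4$, $v^5$ to vanish yields a system of three polynomial equations $E_3=E_4=E_5=0$, subject to $0<a_2\leq a_1\leq 1$ and $a_2\leq 1-a_1$.

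The final step is to show that this system has the unique admissible solution $a_1=a_2=1/2$, $\alpha_1=\alpha_2=\beta=0$, which corresponds exactly to $\Phi(z)=\tfrac12(1-z_1z_2)$. The equation $E_3=0$ takes the form $a_2^3\alpha_1-a_1^3\alpha_2+\tfrac12\beta a_1a_2(a_1-a_2)=0$ and is linear in $(\alpha_1,\alpha_2,\beta)$; combined with $E_4=0$, it reduces these imaginary-part parameters to explicit linear functions of $(a_1,a_2)$. Plugging the result into $E_5=0$ produces a polynomial identity in $(a_1,a_2)$ alone that can be handled by factoring, and the inequality $a_2\leq 1-a_1$ from Lemma~\ref{lem:a2a1} is crucial for ruling out spurious real branches. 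The contrapositive conclusion is then immediate: if $\Phi\neq\tfrac12(1-z_1z_2)$, some coefficient of $\mathrm{Re}\,\phi(\psi(v),v)$ of order $\leq 5$ is nonzero, which by the reduction above implies either a nonzero term of order $\leq 5$ in the Taylor series of $\mathrm{Re}\,\phi-\gamma^2$ or one of order $\leq 4$ in that of $\mathrm{Im}\,\phi-\gamma h$. The main obstacle is precisely this algebraic elimination: the five-parameter system is coupled and highly nonlinear, and narrowing the constrained region down to a single point requires a careful, largely computational case analysis in which the boundary constraint from Lemma~\ref{lem:a2a1} plays an essential role.
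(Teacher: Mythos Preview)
Your reduction to the single curve $u=\psi(v)$ is elegant but throws away too much. You correctly derive the intermediate step $\gamma(\psi(v),v)=O(v^{5})$, yet you then use only its consequence $\mathrm{Re}\,\phi(\psi(v),v)=O(v^{6})$. That consequence yields just the three scalar equations $E_3=E_4=E_5=0$, and $E_3$ is already forced by $\mathrm{Re}\,\phi\geq 0$ (evaluate at $u=0$). So you are left with two effective equations on the five real parameters $(a_1,a_2,\alpha_1,\alpha_2,\beta)$, which cannot isolate a single point. Your own sentence ``combined with $E_4=0$, it reduces these imaginary-part parameters to explicit linear functions of $(a_1,a_2)$'' is the place this shows up concretely: $E_3$ and $E_4$ are two equations and cannot determine the three unknowns $\alpha_1,\alpha_2,\beta$.

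What is lost is precisely the second-order vanishing of $\mathrm{Re}\,\phi$ along $\{\gamma=0\}$. From $\mathrm{Re}\,\phi=\gamma^{2}+O(|\cdot|^{6})$ one also gets $\partial_u\mathrm{Re}\,\phi=2\gamma\,\partial_u\gamma+O(|\cdot|^{5})$, and hence $\partial_u\mathrm{Re}\,\phi(\psi(v),v)=O(v^{5})$; the vanishing of its $v^{2},v^{3},v^{4}$ coefficients supplies the missing constraints. Equivalently, $\gamma(\psi(v),v)=O(v^{5})$ says that the zero curve of $\mathrm{Im}\,\phi$ agrees with that of $\gamma$ through order $v^{4}$, whereas $\mathrm{Re}\,\phi(\psi(v),v)=O(v^{6})$ together with nonnegativity only forces agreement through order $v^{2}$. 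The paper does not pass to the curve at all: it writes $\gamma$ and $h$ with unknown Taylor coefficients and matches $\mathrm{Re}\,\phi=\gamma^{2}$ and $\mathrm{Im}\,\phi=\gamma h$ term by term, extracting in particular the $v^{2}$, $v^{3}$, $v^{4}$ coefficients of the $\mathrm{Im}\,\phi$ identity (your approach uses these only to build $\psi$, not as constraints). With those extra equations the elimination goes through and forces $a_1=a_2=1/2$, $\alpha_1=\alpha_2=\beta=0$; without them the system is underdetermined.
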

The proof of this lemma is rather delicate and will be postponed in Section~\ref{sec:lemmaproof} in order to keep a clearer exposition of the proof of Lemma~\ref{PROPMAIN}. However, using Lemma~\ref{lem:keylemma} we are able to finish this case. Indeed, if $\Gamma(u,v)=(\gamma(u,v),v)$ is the map given by the parametrized Morse lemma and if $f_1,f_2$ and $f_3$ are smooth functions such that $$\mathrm{Re}\,\phi(u,v)=\gamma(u,v)^2+f_1(v)\qquad\textrm{ and }\qquad\textrm{Im}\,\phi(u,v)=\gamma(u,v)f_2(u,v)+f_3(v),$$ then Lemma~\ref{lem:keylemma} implies that either $f_1(v)\sim_0 \alpha_p v^p$ with $p\leq 5$ or $f_3(v)\sim_0\beta_p v^p$ with $p\leq 4$. By the considerations above we may conclude that $\kappa = 9/8$ is possible. \qed

\subsection*{The case $J=1$ and dependence, $d\geq 3$} It remains to consider the case $J=1$, $d\geq 3$ and $\ell_1$ is a multiple of $\ell_{d+1}$. We shall deduce this case from the case $d=2$ using the following lemma. 
\begin{lem}
	\label{LEMD3} Let $\veps>0$, $\tau\in\RR$ and $z_3,\ldots,z_d\in\TT^{d-2}$. Consider the set
	\[A_{z_3,\ldots,z_d}(\tau,\veps)=\left\{(z_1,z_2)\in\TT^2\,:\, \Phi(z)\in Q(\tau,\veps)\right\}.\]
	Then, for every $w_3,\ldots,w_d\in\TT^{d-2}$, there exists a neighbourhood $\mathcal W \ni (w_3,\ldots,w_d)$ in $\TT^{d-2}$ such that, for all $(z_3,\ldots,z_d)\in\mathcal W$ we have
	\[A_{z_3,\ldots,z_d}(\tau,\veps)\subset A_{w_3,\ldots,w_d}(\tau,2\veps).\]
	\begin{proof}
		Assume that this is not the case. Then there exists a sequence $\big(z_1^{(k)},\ldots,z_d^{(k)}\big)$ in $\TT^d$ such that $z_j^{(k)}\to w_j$ for $3\leq j\leq d$ and
		\[\big(z_1^{(k)},z_2^{(k)}\big)\in A_{z_3^{(k)},\ldots,z_d^{(k)}}(\tau,\veps)\backslash A_{w_3,\ldots,w_d}(\tau,2\veps).\]
		Extracting a subsequence if necessary, we may assume that $z_1^{(k)}\to w_1$ and $z_2^{(k)}\to w_2$ for some $(w_1,w_2)\in\TT^2$. By the continuity of $\Phi$,this implies that $\Phi(w)\in \overline{Q(\tau,\veps)}\backslash Q(\tau,2\veps)$, which is a contradiction. 
	\end{proof}
\end{lem}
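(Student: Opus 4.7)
The plan is to argue by contradiction, relying only on the continuity of the polynomial $\Phi$ and the compactness of $\TT^2$. Suppose the conclusion fails for some $(w_3,\ldots,w_d)\in\TT^{d-2}$. Shrinking the candidate neighbourhoods to balls of radius $1/k$ around $(w_3,\ldots,w_d)$, we may extract sequences $\big(z_3^{(k)},\ldots,z_d^{(k)}\big)\to(w_3,\ldots,w_d)$ and $\big(z_1^{(k)},z_2^{(k)}\big)\in\TT^2$ such that $\Phi\big(z^{(k)}\big)\in Q(\tau,\veps)$ while $\Phi\big(z_1^{(k)},z_2^{(k)},w_3,\ldots,w_d\big)\notin Q(\tau,2\veps)$ for every $k$.

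By compactness of $\TT^2$, after a further extraction, $\big(z_1^{(k)},z_2^{(k)}\big)\to (w_1,w_2)$ for some $(w_1,w_2)\in\TT^2$. Writing $w=(w_1,\ldots,w_d)$, continuity of $\Phi$ shows that both sequences $\Phi\big(z^{(k)}\big)$ and $\Phi\big(z_1^{(k)},z_2^{(k)},w_3,\ldots,w_d\big)$ converge to the same point $\Phi(w)$. Since $Q(\tau,\veps)$ is closed, the first condition forces $\Phi(w)\in Q(\tau,\veps)$, and since the complement of $Q(\tau,2\veps)$ is closed as well, the second forces $\Phi(w)\notin Q(\tau,2\veps)$.

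The only remaining point is the inclusion $Q(\tau,\veps)\subset Q(\tau,2\veps)$, which is immediate from the definition $Q(\tau,\veps)=[0,\veps]\times[\tau-\veps/2,\tau+\veps/2]$ recorded earlier. This inclusion contradicts the previous paragraph and closes the argument. I do not anticipate a genuine obstacle: the factor $2$ appearing in the statement is chosen precisely so that the containment of Carleson squares is available on the nose, which is exactly what makes the continuity/compactness extraction succeed.
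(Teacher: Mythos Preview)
Your argument is essentially the same as the paper's: both proceed by contradiction, extract a convergent subsequence on $\TT^2$ by compactness, and use continuity of $\Phi$ to force the limit point $\Phi(w)$ to lie simultaneously in $Q(\tau,\veps)$ and outside $Q(\tau,2\veps)$.

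One small slip to fix: you write ``since the complement of $Q(\tau,2\veps)$ is closed as well''. It is not; $Q(\tau,2\veps)=[0,2\veps]\times[\tau-\veps,\tau+\veps]$ is closed, so its complement is open, and limits of points in an open set need not stay in it. The paper also glosses over this step. The clean way to close the argument is to observe that every $\Phi\big(z_1^{(k)},z_2^{(k)},w_3,\ldots,w_d\big)$ lies in $\overline{\CC_0}$, so ``not in $Q(\tau,2\veps)$'' means either $\mathrm{Re}>2\veps$ or $|\mathrm{Im}-\tau|>\veps$; passing to the limit gives $\mathrm{Re}\,\Phi(w)\geq 2\veps$ or $|\mathrm{Im}\,\Phi(w)-\tau|\geq\veps$, and either inequality contradicts $\Phi(w)\in Q(\tau,\veps)$. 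Equivalently, $Q(\tau,\veps)$ sits in the interior of $Q(\tau,2\veps)$ \emph{relative to} $\overline{\CC_0}$, which is all you need.
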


We now set
\[\Psi_{1,2}(z_1,z_2)=\Phi(z_1,z_2,1,\ldots,1).\]
Since $J(\Phi,w)=1$, we already know that $a_j>0$ for all $j=1,\ldots,d$ and hence the variables $z_1$ and $z_2$ both appear in the polynomial $\Psi_{1,2}$. Provided $\Psi_{1,2}(z)\neq \lambda_{1,2}(1-z_1z_2)$ for some $\lambda_{1,2}\in\RR^*$, we know from the case $d=2$ that there exists a neighbourhood $\mathcal V \ni (z_1,z_2)$ in $\TT^2$ and $C>0$ such that, for any $\tau\in\RR$ and every $\veps>0$, $$\mathbf m_2\big(\big\{(z_1,z_2)\in\mathcal V\,:\, \Phi(z_1,z_2,1,\ldots,1)\in Q(\tau,2\veps)\big\}\big)\leq C\veps^{\kappa}$$ with $\kappa=9/8$. By Lemma~\ref{LEMD3}, there exists a neighbourhood $\mathcal W \ni \mathbf 1$ in $\TT^{d-2}$ such that, for any $(z_3,\ldots,z_d)\in\mathcal W$, $$\big\{(z_1,z_2)\in\mathcal V\,:\, \Phi(z_1,\ldots,z_d)\in Q(\tau,\veps)\big\}\subset \big\{(z_1,z_2)\in\mathcal V\,:\,\ \Phi(z_1,z_2,1,\ldots,1)\in Q(\tau,2\veps)\big\}.$$ This yields $$\mathbf m_d\left(\left\{z\in\mathcal V\times\mathcal W\,:\,\ \Phi(z)\in Q(\tau,\veps)\right\}\right)\leq C\veps^{\kappa}.$$

So the result is proved except if, for every $j<k$, there exists some $\lambda_{j,k}>0$ such that 
\begin{equation}
	\label{eq:Lij} \Phi(1,\ldots,1,z_j,1,\ldots,1,z_k,1,\ldots,1)=\lambda_{j,k}(1-z_jz_k)=\lambda_{j,k}\big((1-z_j)+(1-z_k)-(1-z_j)(1-z_k)\big). 
\end{equation}
Comparing this with the expansion of $\Phi$ near $\mathbf 1$, we get
\[a_j=a_k=\lambda_{j,k},\qquad b_j=0,\qquad c_{j,k}=-\lambda_{j,k}.\]
Using \eqref{eq:relationsJ1}, we may conclude that $a_j=1/2$ and $c_{j,k}=-1/2$. In total this means that
\[\Phi(z_1,z_2,\ldots,z_d) = \frac{1}{2}\sum_{j=1}^d (1-z_j) - \frac{1}{2}\sum_{1 \leq j < k \leq d} (1-z_j)(1-z_k).\]
However,
\[\Phi(-1,-1,\ldots,-1) = \frac{2d-2d(d-1)}{2} = d(2-d)<0,\]
since $d \geq 3$. Hence \eqref{eq:Lij} is not possible for every $j<k$ and we are done. \qed

\section{Proof of Lemma~\ref{lem:keylemma}} \label{sec:lemmaproof} We intend to prove this result by contradiction. We require several tedious computations, which can be done either by hand or by a computer algebra system. We have used \texttt{Xcas}, and our file is available for download \cite{Xcas}. In the proof below, we will skip certain computations such as computing Taylor coefficients, simplifying algebraical expressions and solving simple equations. The proof consists of three steps, and in each step we refer to the lines in \cite{Xcas} where the computations are performed. 

The idea of the argument is rather easy. We assume that we may factorize $\mathrm{Re}\,\phi(u,v)$ and $\mathrm{Im}\,\phi(u,v)$ as (\ref{EQJ1-1}) and (\ref{EQJ1-2}) and we write 
\begin{eqnarray*}
	\gamma(u,v) &=&-2a_1a_2u+\gamma_{2,0} u^2+\gamma_{1,1} uv+\gamma_{0,2} v^2+\gamma_{3,0} u^3+\gamma_{2,1} u^2v+\gamma_{1,2} u^2v+\gamma_{0,3} v^3 \\
	& &+\gamma_{4,0}u^4+\gamma_{3,1} u^3v+\gamma_{2,2} u^2 v^2+\gamma_{1,3} u v^3+\gamma_{0,4} v^4 +o\big(|u|^5+|v|^5\big),\\
	h(u,v) &=&1+h_{1,0}u+h_{0,1}v+h_{2,0}u^2+h_{1,1}uv+h_{0,2}v^2+o\big(|u|^2+|v|^2\big). 
\end{eqnarray*}

We already know the first coefficients of $\gamma$ and $h$ by \eqref{EQJ1-3} and \eqref{EQJ1-4}. We will then compare the Taylor expansions of $\mathrm{Re}\,\phi(u,v)$ and $\mathrm{Im}\,\phi(u,v)$ obtained using \eqref{EQJ1-5} or using \eqref{EQJ1-1} and \eqref{EQJ1-2}. Looking at all coefficients of a given order, we will get first the value of the coefficients of the Taylor expansions of $\gamma$ and $h$ of a certain order and also an equation for $\mathrm{Im}(b_1)$, $\mathrm{Im}(b_2)$ and $\mathrm{Im}(c)$. 

At one point, we will have more equations than variables. These equations will have to be compatible, and will force $\Phi(z_1,z_2)=(1-z_1z_2)/2$, which is equivalent to saying $a_1=a_2=1/2$ and $\mathrm{Im}(b_1)=\mathrm{Im}(b_2)=\mathrm{Im}(c)=0$. This will imply the desired result.

\subsection*{Step 1} The goal of the first step is to show that if we have $a_1=a_2=1/2$, then we also have $\mathrm{Im}(b_1)=\mathrm{Im}(b_2)=\mathrm{Im}(c)=0$. In addition to this, we obtain some useful equations for the following steps. \cite[Lin.~1--14]{Xcas}

We begin by looking at the coefficients of $uv^2$ in the real part of $\Phi(u,v)$. Using on the one hand \eqref{EQJ1-5} and on the other hand \eqref{EQJ1-1} we conclude that
\[\gamma_{0,2} = \frac{-6a_1^3\mathrm{Im}(b_2)-6a_2^3\mathrm{Im}(b_1)+a_1a_2(a_1+a_2)\mathrm{Im}(c)}{8a_1a_2}.\]
We then obtain the first equation for $\mathrm{Im}(b_1)$, $\mathrm{Im}(b_2)$ and $\mathrm{Im}(c)$ by looking at the coefficients of $v^3$ in the real part: 
\begin{equation}
	\label{EQJ1-6} a_2^3\mathrm{Im}(b_1)-a_1^3\mathrm{Im}(b_2)+\frac{a_1a_2(a_2-a_1)}{2}\,\mathrm{Im}(c)=0. 
\end{equation}
Since we know the value of $\gamma_{0,2}$, we can get a second equation for $\mathrm{Im}(b_1)$, $\mathrm{Im}(b_2)$ and $\mathrm{Im}(c)$ by looking at the coefficients of $v^2$ in the imaginary part. Hence we get 
\begin{equation}
	\label{EQJ1-7} \frac{4a_1a_2^2-3a_2^2}{4a_1}\mathrm{Im}(b_1)+\frac{4a_1^2a_2-3a_1^2}{4a_2}\mathrm{Im}(b_2)+\frac{-8a_1a_2+a_1+a_2}{8}\,\mathrm{Im}(c)=0. 
\end{equation}
By the assumptions on $a_1$ and $a_2$, we know that $2(a_1+a_2)<3$ and hence we can solve \eqref{EQJ1-6} and \eqref{EQJ1-7} with respect to $\mathrm{Im}(b_1)$ and $\mathrm{Im}(b_2)$ to obtain
\[\mathrm{Im}(b_1)=\frac{a_1(2a_2^2+2a_1a_2+a_1-2a_2)}{2a_2^2(2a_1+2a_2-3)}\,\mathrm{Im}(c)\quad \textrm{ and } \quad \mathrm{Im}(b_2)=\frac{a_2(2a_1^2+2a_1a_2-2_1+a_2)}{2a_1^2(2a_1+2a_2-3)}\,\mathrm{Im}(c).\]
In particular, we may conclude that if $\mathrm{Im}(c)=0$, then we also have $\mathrm{Im}(b_1)=\mathrm{Im}(b_2)=0$. If we substitute these values into the expression for $\gamma_{0,2}$, we obtain
\[\gamma_{0,2} = \frac{-(a_1+a_2)^2}{2(2a_1+2a_2-3)}\,\mathrm{Im}(c).\]
Now, looking at the coefficient of $v^4$ in the real part shows that
\[(\gamma_{0,2})^2 = -\frac{a_1a_2(a_1+a_2)(a_1a_2^2+a_1^2a_2-a_1^2-a_2^2+a_1a_2)}{4},\]
and this yields our first expression for $\mathrm{Im}(c)^2$, 
\begin{equation}
	\label{EQJ1-9} \mathrm{Im}(c)^2=\frac{-a_1a_2(2a_1+2a_2-3)^2(a_1a_2^2+a_1^2a_2-a_1^2-a_2^2+a_1a_2)}{(a_1+a_2)^3}. 
\end{equation}
From \eqref{EQJ1-9}, it is evident that if $a_1 = a_2 = 1/2$, then $\mathrm{Im}(c)=0$. \qed

\subsection*{Step 2} In this step, we want to show that $a_1 = a_2$. Our first goal is to compute another equation to compare with \eqref{EQJ1-9}. \cite[Lin.~15--21]{Xcas}

We begin by successively looking at the coefficients of $u^2v$ in the real part, $uv$ in the imaginary part and $uv^3$ in the real part, to obtain 
\begin{eqnarray*}
	\gamma_{1,1}&=& \frac{a_1-a_2}{2}\,\mathrm{Im}(c),\\
	h_{0,1}&=&\frac{(a_1-a_2)(4a_1+4a_2-3)}{4a_1a_2(2a_1+2a_2-3)}\,\mathrm{Im}(c), \\
	\gamma_{0,3}&=&\frac{-{a_1}+{a_2}}{24 {a_1} {a_2} (2 {a_1}+2 {a_2}-3)}\times\\
	&&\big(20 {a_1}^2 {a_2}^{4}+40 {a_1}^{3} {a_2}^{3}+20 {a_1}^{4} {a_2}^{2}-12 {a_1} {a_2}^{4}-54 {a_1}^{2} {a_2}^{3}-54 {a_1}^{3} {a_2}^{2}\\
	&&\quad-12 {a_1}^{4} {a_2}+18 {a_1} {a_2}^{3}+18 {a_1}^{2} {a_2}^{2}+18 {a_1}^{3} {a_2}+3({a_1}+a_2)^2 \mathrm{Im}(c)^{2}\big). 
\end{eqnarray*}
Using these values, we will investigate the coefficient of $v^3$ in the imaginary part. This term depends indeed only on $\gamma_{1,1}$, $h_{0,1}$ and $\gamma_{0,3}$. Using the above expression, we obtain our second equation on $\mathrm{Im}(c)^2$: 
\begin{equation}
	\label{eq:Imc2} \frac{-3(a_1-a_2)(a_1+a_2)^2(a_1+a_2-1)}{4a_1a_2(2a_1+2a_2-3)^2}\mathrm{Im}(c)^2=\frac{-(a_1-a_2)(3a_1a_2^2+3a_1^2a_2-a_1^2-a_2^2-a_1a_2)}{4}. 
\end{equation}
At this stage, we have to consider several cases. Assuming that $a_1-a_2\neq 0$ and $a_1+a_2-1\neq 0$, we may compute 
\begin{equation}
	\label{EQJ1-8} \mathrm{Im}(c)^2=\frac{-a_1a_2(2a_1+2a_2-3)^2(3a_1a_2^2+3a_1^2a_2-a_1^2-a_2^2-a_1a_2)}{3(a_1+a_2)^2(a_1+a_2-1)}. 
\end{equation}
The only possibility is that the two values for $\mathrm{Im}(c)^2$ have to coincide. Equating \eqref{EQJ1-9} and \eqref{EQJ1-8} and simplifying, we obtain
\[\frac{a_1a_2(2a_1+2a_2-3)^2P(a_1,a_2)}{3(a_1+a_2)^2(a_1+a_2-1)}=0,\]
where
\[P(a_1,a_2)=2a_1^3+2a_2^3+a_1a_2^2+a_1^2a_2-3a_1^2-3a_2^2+3a_1a_2.\]
Since $2(a_1+a_2)<3$, the only possibility is that $P(a_1,a_2)$ vanishes somewhere in the domain
\[\mathcal{\Omega}=\{(a_1,a_2)\in (0,1)^2\,:\,\ a_2< a_1,\ a_2< 1-a_1\}.\]
We first look at what happens on the boundary, where we have 
\begin{eqnarray*}
	P(a_1,0)&=&2a_1^3-3a_1^2<0 \quad\text{ provided }a_1\in (0,1),\\
	P(a_1,a_1)&=&3a_1^2(2a_1-1)<0\quad\text{ provided }a_1\in (0,1/2),\\
	P(a_1,1-a_1)&=&-(2a_1-1)<0 \quad\text{ provided }a_1\in (1/2,1). 
\end{eqnarray*}
Hence, $P$ is negative on the boundary of $\mathcal \Omega$, except at $(1/2,1/2)$. Hence, if $P$ vanishes in $\Omega$, then it admits a critical point there. Now, we consider the system 
\[\left\{
\begin{array}{rcl}
	0=\frac{
	\partial P}{
	\partial a_1}(a_1,a_2)&=&6a_1^2+a_2^2+2a_1a_2-6a_1+3a_2, \\
	0=\frac{
	\partial P}{
	\partial a_2}(a_1,a_2)&=&a_1^2+6a_2^2+2a_1a_2+3a_1-6a_2. 
\end{array} \right.
\]
The solutions of this system are easily found to be at the intersection of two distinct ellipses. We cannot have more than two points of intersection and we have two trivial solutions, $(0,0)$ and $(1/3,1/3)$. Hence, none of the critical points of $P$ are inside $\mathcal \Omega$. Hence we get a contradiction, and we have finished this case.

Hence we must have $a_1 + a_2 = 1$ or $a_1 = a_2$. Let us now investigate the case $a_1+a_2=1$. Looking at \eqref{eq:Imc2}, this means that either $a_1 = a_2 = 1/2$ (and we are done) or
\[3a_1a_2^2+3a_1^2a_2-a_1^2-a_2^2-a_1a_2=0.\]
Taking into account that $a_2=1-a_1$, we get the equation $-4a_1^2+4a_1-1=0$ which admits the single solution $a_1=1/2$ and we get the same conclusion. Hence, the only remaining possibility is that $a_1 = a_2$. \qed

\subsection*{Step 3} It remains to deal with the case $a_1 = a_2 = a \in (0,1/2]$. We can no longer use \eqref{EQJ1-8} and need to find another equation for $\mathrm{Im}(c)^2$. \cite[Lin.~21--30]{Xcas}

We will be looking at the coefficient before $v^4$ in the imaginary part. By considering $\gamma \times h$, we see that this coefficient is equal to
\[\gamma_{0,4} + \gamma_{0,3}h_{0,1}+\gamma_{0,2}h_{0,2}.\]
Hence, it remains to compute $\gamma_{0,4}$ and $h_{0,2}$. First, we compute some auxiliary values. By looking at $u^3$ in the real part, $u^2$ in the imaginary part and $u^2v^2$ in the real part, respectively, we obtain 
\begin{eqnarray*}
	\gamma_{2,0} &=& - \frac{a(2a-1)}{3a-4}\,\mathrm{Im}(c), \\
	h_{1,0} &=& \frac{(2a-1)(4a-1)}{2a(4a-3)}\,\mathrm{Im}(c), \\
	\gamma_{1,2}&=&\frac{a(2a-1)\left(48a^4-72a^3+27a^2+4\mathrm{Im}(c)^2\right)}{4(4a-3)^2}. 
\end{eqnarray*}
Knowing these values, we look at the coefficients of $uv^4$ in the real part and $uv^2$ and the imaginary part, respectively, to obtain 
\begin{eqnarray*}
	\gamma_{0,4}&=&-a\mathrm{Im}(c)\frac{32a^5-96a^4+90a^3+12a\mathrm{Im}(c)^2-27a^2-6\mathrm{Im}(c)^2}{6(4a-3)^3},\\
	h_{0,2}&=&(-2a+1)\frac{128a^5-240a^4+144a^3+16a\mathrm{Im}(c)^2-27a^2-8\mathrm{Im}(c)^2}{8a(4a-3)^2}. 
\end{eqnarray*}
Finally, we investigate the coefficient of $v^4$ in the imaginary part to obtain the equation
\[a(2a-1)\frac{16a^4-32a^3+15a^2+4\mathrm{Im}(c)^2}{4(4a-3)^2}\,\mathrm{Im(c)}=0.\]
Now, if $\mathrm{Im}(c)=0$ and $a_1 = a_2 = a$, it follows at once from \eqref{EQJ1-9} that $a = 1/2$, since $a \in (0,1/2]$. Conversely, we divide away $\mathrm{Im}(c)$ and solve for $\mathrm{Im}(c)^2$ to obtain the equation
\[\mathrm{Im}(c)^2=-a^2\frac{(4a-3)(4a-5)}{4}.\]
Now, this has to be equal to \eqref{EQJ1-9}, and we find
\[-\frac{a(2a-1)(4a-3)^2}8=-\frac{a^2(4a-3)(4a-5)}{4}.\]
Here the only solutions are $a = 0$ and $a = 3/4$, neither of which belong to $(0,1/2]$. Hence the assumption $\mathrm{Im}(c)\neq0$ must be wrong and we conclude $a_1 = a_2 = 1/2$. \qed

\section{Remarks and Further Examples} \label{sec:discussion} If we look more closely at the map $\Phi$ defined in Lemma~\ref{lem:Cex3} (the negative part of Theorem \ref{thm:main}), then we may observe that these counterexamples all satisfy 
\begin{eqnarray*}
	\mathrm{Re}\,\Phi(e^{i\theta_1},\ldots,e^{i\theta_d})&=&\frac{\theta_1^2}2+o(\theta_1^2)\\
	\mathrm{Im}\,\Phi(e^{i\theta_1},\ldots,e^{i\theta_d})&=&-\theta_1+o(|\theta_1|). 
\end{eqnarray*}
Hence, $J\big(\Phi,\mathbf 1\big)=1$ and, using the terminology of the Section~\ref{sec:mainproof}, we have dependence. Our next results shows that we may also have non-compactness if $J(\Phi,w)=0$ for some $w\in\TT^d$.
\begin{thm}
	\label{prop:Cex5} There are polynomials $\varphi$ with unrestricted range, of any complex dimension $d\geq 2$ and of any complex degree $\geq4$ for which the corresponding composition operator $\mathcal{C}_\varphi$ is non-compact and such that they admit a minimal Bohr lift $\Phi$ satisfying $J(\Phi,w)=0$ for any $w\in\TT^d$ with $\mathrm{Re}\,\Phi(w)=0$. 
	\begin{proof}
		Let $\delta>0$ and define $$\Phi(z_1,z_2)=2(1-z_1)+(1-z_1)^2\big(1-\delta(1-z_2)-\delta(1-z_1)(1-z_2)\big).$$ Let $\varphi(s) = \Phi(p_1^{-s},p_2^{s})$. Clearly $\Phi$ is a minimal Bohr lift of $\varphi$. Then a computation shows that
		\[\mathrm{Re}\, \Phi(z_1,z_2)=2(1-\cos x)\left((1-\cos x)\big(1+2\delta(\sin{x}\sin{y}+(1-\cos{y})\cos{x})\big)+\delta(1-\cos y)\right).\]
		Clearly, for small enough $\delta>0$ this quantity is non-negative. Hence $\varphi\in \mathcal G$ and $J(\Phi,(1,1))=0$ because of the relation between $a_1$ and $b_1$. Considerations similar to those of Lemma~\ref{lem:Cex3} show that $\mathcal C_{\varphi}$ cannot be compact. To produce a counterexample with degree 3 and a bigger complex dimension $d$, we may simply replace $(1-z_2)$ by
		\[\frac{1}{d-1}\cdot\sum_{j=2}^d (1-z_j)\]
		in the definition of $\Phi$. The production of examples with degree $\geq 5$ is easier. We may just set
		\[\Phi(z) = (1-z_1) + \frac{1}{2}(1-z_1)^2 + \delta(1-z_1)^4P(z),\]
		where $P(z) = P(z_1,z_2,\ldots,z_d)$ is any polynomial. The proof follows now that of Lemma~\ref{lem:Cex3}. 
	\end{proof}
\end{thm}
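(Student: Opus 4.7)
The plan is to exhibit, by explicit construction, a polynomial $\Phi$ whose associated composition operator is non-compact even though the Hessian of $\mathrm{Re}\,\phi$ vanishes at the relevant boundary critical point $\mathbf 1$ (so $J(\Phi, \mathbf{1}) = 0$). The driving idea is the mechanism already used in Lemma~\ref{lem:Cex3}: if near a boundary zero we have $\mathrm{Re}\,\Phi = O(\theta_1^2)$ and $\mathrm{Im}\,\Phi = c\,\theta_1 + O(\theta_1^2)$ with $c \neq 0$, then the preimage of the Carleson square $Q(0, \varepsilon)$ has measure $\gg \varepsilon$, so by Lemma~\ref{lem:QS} the operator $\mathcal{C}_\varphi$ is non-compact. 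The new wrinkle compared to Lemma~\ref{lem:Cex3} is to simultaneously cancel every quadratic coefficient of $\mathrm{Re}\,\phi$ at the critical point.

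For $d = 2$, I would try
\[\Phi(z_1, z_2) = A(1 - z_1) + \frac{A}{2}(1 - z_1)^2 + \delta\,(1 - z_1)^2\,R(z_1, z_2),\]
with $A > 0$, $\delta > 0$ a small parameter, and $R$ a polynomial depending nontrivially on $z_2$ (so the optimal Bohr dimension is genuinely $2$). The relation $B = A/2$ between the linear and pure-quadratic coefficients of $(1-z_1)$ cancels the $\theta_1^2$ contribution to $\mathrm{Re}\,\phi$ at $\mathbf 1$. Since $a_2 = 0$ and no quadratic cross-coefficient $c_{1,2}$ is introduced at order two, the Hessian of $\mathrm{Re}\,\phi$ at $\mathbf 0$ is identically zero, giving $J(\Phi, \mathbf{1}) = 0$.

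The next step is to check that $\Phi \in \mathcal G$, i.e.\ $\Phi(\DD^2) \subset \CC_0$. Expanding $\mathrm{Re}\,\Phi(e^{i\theta_1}, e^{i\theta_2})$ one sees the dominant term is $A(1 - \cos\theta_1) \geq 0$, and for $\delta > 0$ sufficiently small the perturbation cannot flip this sign; unrestricted range then follows from $\Phi(\mathbf 1) = 0$. The minimality of the Bohr lift is enforced by the presence of the pure $(1-z_1)^2$ term, which rules out the degenerate possibility $\Phi(z) = \lambda(1 - z_1 z_2)$ arising from a smaller $\Lambda_0$. The non-compactness is then immediate: near $\mathbf 1$ we have $\mathrm{Re}\,\Phi = O(\theta_1^2)$ and $\mathrm{Im}\,\Phi = -A\,\theta_1 + O(\theta_1^2)$, so the preimage of $Q(0, \varepsilon)$ contains a set of the form $\{|\theta_1| \lesssim \varepsilon,\ \theta_2 \in I\}$ for some fixed interval $I$, which has measure $\gg \varepsilon$ and violates the vanishing Carleson condition.

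For arbitrary $d \geq 2$, I would symmetrise by letting $R$ depend on $\sum_{j=2}^d (1 - z_j)$ or on a suitable product of $(1-z_j)$'s; the local analysis at $\mathbf 1$ is unchanged. For arbitrary degree $\geq 4$, one raises the power of $(1-z_1)$ multiplying $R$ or inserts higher-degree monomials in $R$; these modifications vanish to sufficiently high order in $\theta_1$ so the argument carries over verbatim. The main obstacle I anticipate is simultaneously guaranteeing (i) $\Phi(\DD^d) \subset \CC_0$, (ii) minimality of the Bohr lift, (iii) optimality of $\Lambda_0$ in the chosen complex dimension, and (iv) that the construction actually achieves the prescribed degree; these are essentially bookkeeping verifications but must be handled with care because any collapse---e.g.\ the lift folding onto a single variable or a lower-degree reduction---would invalidate the counterexample.
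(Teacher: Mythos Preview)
Your overall strategy coincides with the paper's: take $A(1-z_1)+\tfrac{A}{2}(1-z_1)^2$, which already has $J(\Phi,\mathbf 1)=0$, and perturb by $\delta(1-z_1)^2R(z)$ to insert the remaining variables; the non-compactness is then read off exactly as in Lemma~\ref{lem:Cex3}. That part is fine.

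The gap is in your verification that $\Phi(\DD^d)\subset\CC_0$. You assert that the dominant term of $\mathrm{Re}\,\Phi(e^{i\theta})$ is $A(1-\cos\theta_1)$ and that a small $\delta$ cannot flip its sign. But the relation $b_1=a_1/2$ that you imposed to force $J=0$ is precisely what kills this term on the torus: a direct computation gives
\[
\mathrm{Re}\Big[A(1-e^{i\theta_1})+\tfrac{A}{2}(1-e^{i\theta_1})^2\Big]=A(1-\cos\theta_1)^2=O(\theta_1^4).
\]
Meanwhile $|(1-e^{i\theta_1})^2|=2(1-\cos\theta_1)$, so the perturbation $\delta(1-z_1)^2R$ contributes a real part of order $\delta\,\theta_1^2$ in general, and this dominates $\theta_1^4$ near $\mathbf 1$ for \emph{every} $\delta>0$. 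Concretely, with $R(z)=1-z_2$ and $\theta_2=\pi$ one finds
\[
\mathrm{Re}\,\Phi(e^{i\theta_1},-1)=A(1-\cos\theta_1)^2-4\delta\cos\theta_1\,(1-\cos\theta_1),
\]
which is negative for all small $\theta_1\neq 0$. So a generic $R$ does not work, and this step is not ``bookkeeping''. The paper handles the degree-$4$ case by choosing $R$ so that $\mathrm{Re}\,\Phi$ factors as $2(1-\cos\theta_1)$ times a manifestly non-negative expression; for degree $\geq 5$ it uses instead a factor $(1-z_1)^4$ in front of the perturbation, which makes the perturbation $O(\theta_1^4)$ and then your small-$\delta$ argument is valid.
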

The reason the first counterexample in Theorem~\ref{prop:Cex5} works is that we have a cancellation of the term $(1-\cos{x})\sin{x}\sin{y}$. It seems difficult to obtain the same cancellation if we restrict ourself to degree $3$ and require $J=0$.
\begin{que}
	Is it possible to construct a counterexample of degree $3$ with $J=0$? 
\end{que}

An answer to the question would in a certain sense improve the optimality of Lemma~\ref{PROPMAIN}, but it would not yield the complete answer to which Dirichlet polynomials in $\mathcal{G}$ induce compact composition operators. Indeed, the natural next point of investigation would be this: What happens when the ``quartic form'' is degenerate?

In this case, terms of degree $5$ also have to disappear. This follows by the mapping properties and the argument is identical to the one used to show that degree $3$ terms disappear in the case $J=0$ given above. Hence we are reduced to studying a ``sextic form''. 

Our counterexamples can be modified to work in this case, but they now have degree $6$ and $7$. Degree $\leq 3$ will also easily reduce to the case of Theorem~\ref{thm:polynomials} in the same manner as $J=0$ did for degree $\leq 2$. However, the cases with degree $4$ and $5$ would need further investigation. Even if we could solve this case, we would need to investigate the case when the ``sextic form'' is degenerate and this leads to the ``octic form'' and so on.
\begin{rem}
	The previous counterexample shows that we cannot deduce Theorem \ref{thm:polynomials} from Theorem \ref{thm:main}. Indeed, it is easy to construct symbols $\varphi\in \mathcal G$ which may be written $\varphi(s)=\sum_{j=1}^d P_j(p_j^{-s})$ and such that $J(\Phi,\mathbf 1)=0$ for $\Phi$ a minimal Bohr lift of $\varphi$. Indeed, we may consider $$P_j(z)=(1-z)+\frac12(1-z)^2+\delta(1-z)^4 Q_j(z)$$ where $Q_j$ is an arbitrary polynomial and $\delta>0$ is sufficiently small. Then $\mathcal C_{\varphi}$ is compact by Theorem~\ref{thm:polynomials} if $d\geq2$ but this cannot be deduced from Lemma~\ref{PROPMAIN}. 
\end{rem}

This construction can be generalized to show that Theorem~\ref{thm:polynomials} can handle a variety of different interesting cases not covered by Theorem~\ref{thm:main}. In fact, given any $d$ positive integers $k_j$, we may find a polynomial $\Phi(z) = \sum_{j=1}^d \Phi_j(z_j)$ which is a minimal Bohr lift of some $\varphi \in \mathcal{G}$, with $\mathrm{Re}\,\Phi(z_1,\ldots,z_d)=0$ if and only if $z=\mathbf 1$ and here we have the expansion 
\begin{align*}
	\mathrm{Re}\,\Phi\big(e^{i\theta_1},\,\ldots,\, e^{i\theta_d}\big) &= \sum_{j=1}^d \theta_j^{2k_j} + o\left(\sum_{j=1}^d \theta_j^{2k_j}\right), \\
	\mathrm{Im}\,\Phi\big(e^{i\theta_1},\,\ldots,\, e^{i\theta_d}\big) &= \sum_{j=1}^d a_1^{(j)}\theta_j + o\left(\sum_{j=1}^d |\theta_j|\right). 
\end{align*}
As remarked upon in the proof of Theorem~\ref{thm:polynomials}, we must have $a_1^{(j)}>0$. The construction of such a polynomial is immediate from our next result.
\begin{lem}
	\label{lem:1dimpoly} There is a polynomial $\Phi:\CC\to\CC$ which satisfy $\mathrm{Re}\,\Phi\big(e^{ix}\big) = (1-\cos{x})^k$, for any $k \in \mathbb{N}$. 
	\begin{proof}
		Fix $N$ with $k \leq 2N$, and for real numbers $a_n$ and $b_n$ consider
		\[\Phi(z) = \sum_{n=1}^N \frac{(-1)^{n-1}}{2^n}\big(a_n(1-z)^{2n-1} - b_n(1-z)^{2n}\big).\]
		Our first goal is to expand the real part of $\Phi\big(e^{ix}\big)$ as degree $2N$ polynomial in $(1-\cos{x})$ with no constant term. To this end, we compute
		\[\big(1-e^{ix}\big)^{2n-1} = e^{i(2n-1)x/2}\big(e^{-ix/2}-e^{ix/2}\big)^{2n-1} = e^{i(2n-1)x/2}2^{2n-1}(-1)^n i \sin^{2n-1}\left(\frac{x}{2}\right).\]
		We use $2\sin^{2}(x/2)=1-\cos{x}$, and obtain 
		\begin{equation}
			\label{eq:reOdd} \mathrm{Re}\,\big(1-e^{ix}\big)^{2n-1} = 2^{2n-1}(-1)^{n-1} \sin^{2n-1}\left(\frac{x}{2}\right)\sin\left(nx-\frac{x}{2}\right) = (-1)^{n-1} 2^n (1-\cos{x})^n \frac{\sin\left(nx-\frac{x}{2}\right)}{2\sin\left(\frac{x}{2}\right)}. 
		\end{equation}
		Similarly, we obtain 
		\begin{equation}
			\label{eq:reEven} \mathrm{Re}\,\big(1-e^{ix}\big)^{2n} = 2^{2n}(-1)^n \sin^{2n}\left(\frac{x}{2}\right)\cos(nx) = (-1)^n 2^n (1-\cos{x})^n \cos{(nx)}. 
		\end{equation}
		To continue the computations, we introduce the Chebyshev polynomials
		\[ U_n(y) = \sum_{j=0}^n (-2)^j\frac{(n+j+1)!}{(n-j)!(2j+1)!}(1-y)^j, \qquad \text{and} \qquad T_n(y) = n\sum_{j=0}^n (-2)^j \frac{(n+j-1)!}{(n-j)!(2j)!}(1-y)^j.\]
		The Chebyshev polynomials are relevant due to the formulas $\sin{nx} = \sin(x)U_{n-1}(\cos{x})$ and $\cos{nx} = T_n(\cos{x})$. We record the following coefficients. 
		\begin{align*}
			u_{n-2}^{(n)} = (-2)^{n-1}\,\left(-\frac{(2n-1)(n-2)}{2}\right),\qquad\quad u_{n-1}^{(n)} &= (-2)^{n-1}\,(2n),\qquad &u_n^{(n)} = (-2)^{n-1}\,(-2), \\
			t_{n-1}^{(n)} &= (-2)^{n-1}\,(n),\qquad &t_{n}^{(n)} = (-2)^{n-1}\,(-1). 
		\end{align*}
		Now, we rewrite \eqref{eq:reEven} as
		\[\mathrm{Re}\,\big(1-e^{ix}\big)^{2n} = (-1)^n2^n(1-\cos{x})^n T_n(\cos{x}),\]
		which is then clearly a degree $2n$ polynomial in $(1-\cos{x})$ with no constant term. For \eqref{eq:reOdd} we have to work a bit more, so we first compute
		\[\frac{\sin\left(nx-\frac{x}{2}\right)}{2\sin\left(\frac{x}{2}\right)} = \frac{\sin(nx)\cos\left(\frac{x}{2}\right)-\cos(nx)\sin\left(\frac{x}{2}\right)}{2\sin\left(\frac{x}{2}\right)}=\cos^2\left(\frac{x}{2}\right)U_{n-1}(\cos{x})-\frac{T_n(\cos{x})}{2},\]
		which implies that we may rewrite \eqref{eq:reOdd} as
		\[\mathrm{Re}\,\big(1-e^{ix}\big)^{2n-1} = (-1)^{n-1} 2^n (1-\cos{x})^n \left(\left(1-\frac{1-\cos{x}}{2}\right)U_{n-1}(\cos{x})-\frac{T_n(\cos{x})}{2}\right).\]
		Again we observe that this is a polynomial of degree $2n$ in $(1-\cos{x})$ with no constant term. In total, we have
		\[\mathrm{Re}\,\Phi\big(1-e^{ix}\big) = \sum_{m=1}^{2N} c_m(1-\cos{x})^{m} = \sum_{n=1}^N \left(a_n P_n(1-\cos{x})+ b_n Q_n(1-\cos{x})\right),\]
		where 
		\begin{align*}
			P_n(y) &= \sum_{j=0}^{n} d_j^{(n)} y^{n+j} = y^n\left(\left(1-\frac{y}{2}\right)U_{n-1}(1-y)-\frac{T_n(1-y)}{2}\right), \\
			Q_n(y) &= \sum_{j=0}^{n} e_j^{(n)} y^{n+j} = y^nT_n(1-y). 
		\end{align*}
		Given any choice of $c_m$ (for instance $c_m = 0$ for $m\neq k$ and $c_k=1$), we now have $2N$ linear equations and $2N$ unknowns, $a_n$ and $b_n$ for $1\leq n\leq N$. We will now show that this system can always be solved. 
		
		We first observe that $a_n$ and $b_n$ only have an effect on $c_m$ when $n \leq m \leq 2n$. Ordering the unknowns as $a_N,b_N,a_{N-1},b_{N-1},\ldots,a_1,b_1$ and the datas as $c_{2N},c_{2N-1},\ldots,c_1$, this means that the matrix of our system can be written in upper triangular block form, where the blocks on the diagonal are
		\[
		\begin{pmatrix}
			d_{n-1}^{(n)} & e_{n-1}^{(n)} \\
			d_{n}^{(n)} & e_{n}^{(n)} \\
		\end{pmatrix},\qquad n=N,N-1,\,\ldots\,,1.
		\]
		We know that $e_{n-1}^{(n)} = t_{n-1}^{(n)}$ and $e_{n}^{(n)} = t_n^{(n)}$, which we recorded above. It is now easy to verify that 
		\begin{align*}
			d_{n-1}^{(n)} &= u_{n-1}^{(n)} - \frac{u_{n-2}^{(n)}}{2} - \frac{t_{n-1}^{(n)}}{2} = (-2)^{n-1}\left(\frac{3n}{2} + \frac{(2n-1)(n-2)}{4}\right), \\
			d_{n}^{(n)} &= u_{n}^{(n)} - \frac{u_{n-1}^{(n)}}{2} - \frac{t_n^{(n)}}{2} = (-2)^{n-1}\left(-\frac{3}{2}-n\right). 
		\end{align*}
		Hence we are reduced to considering the equation
		\[0 = \frac{d_{n-1}^{(n)}e_n^{(n)}-d_{n}^{(n)}e_{n-1}^{(n)}}{4^{n-1}} = \left(\frac{3n}{2} + \frac{(2n-1)(n-2)}{4}\right)(-1)-\left(-\frac{3}{2}-n\right)n =n^2-\frac{(2n-1)(n-2)}{4},\]
		which has no integer solutions, and we are done. 
	\end{proof}
\end{lem}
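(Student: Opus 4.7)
My plan is to construct $\Phi$ explicitly by Fourier analysis rather than by matching Taylor coefficients at the boundary point $z=1$. The starting point is the identity
\[
1 - \cos x \;=\; \tfrac{1}{2}\bigl(2 - e^{ix} - e^{-ix}\bigr) \;=\; -\tfrac{1}{2}\,e^{-ix}\bigl(e^{ix}-1\bigr)^{2}
\]
on the unit circle. Raising this to the $k$-th power and expanding the binomial $(e^{ix}-1)^{2k}$, after a shift of index by $-k$ I obtain the symmetric Laurent polynomial
\[
(1-\cos x)^{k} \;=\; \frac{1}{2^{k}}\sum_{m=-k}^{k}(-1)^{m}\binom{2k}{k+m}\,e^{imx}
\]
in $z=e^{ix}$, whose coefficients $b_{m}=(-1)^{m}\binom{2k}{k+m}$ satisfy $b_{-m}=b_{m}$; this symmetry is forced by the fact that $(1-\cos x)^{k}$ is real and even in $x$.

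Collecting the symmetric pairs $e^{imx}+e^{-imx}=2\cos(mx)$ gives the Fourier expansion
\[
(1-\cos x)^{k} \;=\; \frac{1}{2^{k}}\binom{2k}{k} + \frac{2}{2^{k}}\sum_{m=1}^{k}(-1)^{m}\binom{2k}{k+m}\cos(mx),
\]
and I simply set
\[
\Phi(z) \;=\; \frac{1}{2^{k}}\binom{2k}{k} \;+\; \frac{2}{2^{k}}\sum_{m=1}^{k}(-1)^{m}\binom{2k}{k+m}\,z^{m},
\]
a polynomial $\CC\to\CC$ of degree exactly $k$ with real coefficients. Because the coefficients are real, taking the real part on the unit circle picks out precisely the cosine series above, so $\mathrm{Re}\,\Phi(e^{ix})=(1-\cos x)^{k}$, which is what the lemma asserts.

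There is essentially no obstacle to this approach once one notices the factorization $2-z-z^{-1}=-z^{-1}(z-1)^{2}$; the verification is a single binomial expansion. The reason the authors give a longer argument seems to be that they want $\Phi$ presented as a polynomial in $(1-z)$ so that the local structure at the boundary point $z=1$ is read off directly from the expansion coefficients $a_{n},b_{n}$. That constraint is what forces the detour through the Chebyshev polynomials $U_{n},T_{n}$ and the $2\times 2$ block triangular linear system, culminating in the verification that $n^{2}-(2n-1)(n-2)/4$ has no positive integer roots. If one only needs to produce \emph{some} polynomial with the prescribed real part on $\TT$, the Fourier-analytic route above suffices.
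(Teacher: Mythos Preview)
Your proof is correct and considerably simpler than the paper's. What you have done is the standard ``analytic completion'' of a nonnegative trigonometric polynomial: write $(1-\cos x)^k$ as a finite cosine series $\hat p(0)+2\sum_{m=1}^k \hat p(m)\cos(mx)$ with real Fourier coefficients, and set $\Phi(z)=\hat p(0)+2\sum_{m=1}^k \hat p(m)z^m$; then $\mathrm{Re}\,\Phi(e^{ix})$ recovers the cosine series. Your binomial computation of the Fourier coefficients is correct, and the resulting $\Phi$ is a polynomial of degree $k$ with real coefficients, which is all the lemma asks for.

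The paper instead parametrizes $\Phi$ by its coefficients in the basis $(1-z)^m$, expresses $\mathrm{Re}\,(1-e^{ix})^m$ through Chebyshev polynomials $T_n,U_n$, and then solves a $2N\times 2N$ linear system by showing it is block upper triangular with invertible $2\times 2$ diagonal blocks. This yields the same existence statement but with substantially more work. Your speculation about the motivation is reasonable but, strictly speaking, unnecessary for the applications: the only properties used downstream (in the discussion before the lemma and in the proof of Corollary~\ref{cor:schatten}) are that $\mathrm{Re}\,\Phi(e^{ix})=(1-\cos x)^k$, which forces $\mathrm{Re}\,\Phi\ge 0$ on $\TT$ with a unique zero at $1$ and the expansion $\mathrm{Re}\,\Phi(e^{ix})\sim x^{2k}/2^k$ near $x=0$, while Lemma~\ref{lem:nonneglin} then guarantees the linear coefficient in the $(1-z)$-expansion is positive. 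All of this is automatic from the identity you prove, so nothing is lost by your shorter route. The paper's approach does give, in principle, explicit control over the $(1-z)$-coefficients via the block system, but that extra information is not exploited anywhere.
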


The construction of $\Phi$ with specific expansion facilitated by Lemma~\ref{lem:1dimpoly} will be used in the next section to prove Corollary~\ref{cor:schatten}.

\section{Approximation numbers}\label{sec:approximationnumber} In this section, we consider only the case $c_0=0$. We intend to estimate the decay of $a_n(\mathcal C_\varphi)$ for maps $\varphi$ which are, in a certain sense, regular at their boundary points. For this we need as previously a careful inspection of the behaviour of the Bohr lift $\Phi$ near these boundary points. 
\begin{defn}
	Suppose that $\varphi(s)=c_1+\sum_{n=2}^N c_n n^{-s} \in \mathcal{G}$, and that $\varphi$ has complex dimension $d$ and unrestricted range. Let $\Phi$ be a minimal Bohr lift of $\varphi$ and let $w\in\TT^d$ be such that $\mathrm{Re}\,\Phi (w)=0$. We say that $\varphi$ is \emph{boundary regular} at $w$ if there exist independent linear forms $\ell_1,\,\ldots,\,\ell_d$ on $\mathbb{C}^d$, even integers $k_1\geq k_2\geq\ldots\geq k_d$ and real numbers $b_1,\,\ldots,\,b_d,\,\tau$ with $b_1\neq 0$ such that 
	\begin{eqnarray}
		\mathrm{Re}\,\Phi\big(e^{i\theta_1}w_1,\,\ldots,\,e^{i\theta_d}w_d\big)&=&\ell_1(\theta)^{k_1}+\ldots+\ell_d(\theta)^{k_d}+\sum_{j=1}^d o\left(\ell_j^{k_j}(\theta)\right)\label{eq:apboundaryregular1}\\
		\mathrm{Im}\,\Phi\big(e^{i\theta_1}w_1,\,\ldots,\,e^{i\theta_d}w_d\big)&=&\tau+b_1\ell_1(\theta)+\ldots+b_d\ell_d(\theta)+o\left(\sum_{j=1}^ d|\ell_j(\theta)|\right).\label{eq:apboundaryregular2} 
	\end{eqnarray}
	 We define the \emph{compactness index} of $\varphi$ at $w$ as $$\eta_{\varphi,w}=\left(\sum_{j=2}^d \frac1{k_j}\right)\times\frac{k_1}{2(k_1-1)}.$$ If every boundary point is boundary regular, we say that $\varphi$ is \emph{boundary regular}.
\end{defn}

The proof of Theorem \ref{thm:polynomials} then shows that given a boundary regular map $\varphi$, the composition operator $\mathcal C_\varphi$ is compact if and only if $d\geq2$. We shall now assume that there is only one point $w \in \mathbb{T}^d$ such that $\mathrm{Re}\,\Phi(z)=0$. In this case, we let the \emph{compactness index} of $\varphi$ be $\eta_\varphi:=\eta_{\varphi,w}$. 

The main theorem of this section now reads. 
\begin{thm}
	\label{thm:approximationnumber} Let $\varphi(s)=c_1+\sum_{n=2}^N c_nn^{-s}\in\mathcal G$ have unrestricted range and complex dimension $d$. Let $\Phi$ be a minimal Bohr lift and assume that there exists a unique $w\in\TT^d$ such that $\mathrm{Re}\,\Phi(w)=0$. Suppose moreover that $\varphi$ is boundary regular at $w$. Then $$\left(\frac{1}{n}\right)^{\eta_\varphi}\ll a_n(\mathcal C_\varphi)\ll \left(\frac{\log n}n\right)^{\eta_\varphi}.$$ 
\end{thm}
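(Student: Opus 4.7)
The plan is to extend the Queff\'elec--Seip argument~\cite{QS14} for linear symbols to the boundary-regular polynomial setting, with the boundary regularity hypothesis providing precisely the local structure of $\mu_\varphi$ needed to carry the argument through. After preliminary reductions --- rotate variables on $\mathbb T^d$ so that $w=\mathbf 1$, vertically translate $\varphi$ to arrange $\Phi(w)=0$, and perform a linear change of coordinates on $\mathbb R^d$ making $\theta_1$ the direction in which $\mathrm{Im}\,\Phi$ has non-trivial linear part --- we may rewrite \eqref{eq:apboundaryregular1}--\eqref{eq:apboundaryregular2} on a neighbourhood of $\mathbf 1$ in $\mathbb T^d$ as
\[
\mathrm{Re}\,\Phi\bigl(e^{i\theta}\bigr)\asymp \sum_{j=1}^d \theta_j^{k_j},\qquad \mathrm{Im}\,\Phi\bigl(e^{i\theta}\bigr)=\theta_1+\sum_{j\geq 2}b'_j\theta_j+o\bigl(|\theta|\bigr).
\]
Running the volume computation from the proof of Theorem~\ref{thm:polynomials} in both directions then yields the two-sided Carleson estimate
\[
\mu_\varphi\bigl(Q(\tau,\varepsilon)\bigr)\asymp \varepsilon^{\kappa},\quad |\tau|\ll \varepsilon^{1/k_1},\qquad \kappa:=1+\sum_{j=2}^d\frac{1}{k_j},
\]
with $\mu_\varphi(Q(\tau,\varepsilon))=0$ when $|\tau|\gg\varepsilon^{1/k_1}$ for $\varepsilon$ small. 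This sharp two-sided control is the new input provided by boundary regularity.

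\textbf{Upper bound.} Following the scheme of \cite[Sec.~5]{QS14}, fix $\sigma>0$ and write $\mathcal C_\varphi=\mathcal C_{\varphi+\sigma}+(\mathcal C_\varphi-\mathcal C_{\varphi+\sigma})$. Since $\varphi+\sigma$ has restricted range, a reproducing-kernel computation adapted from \cite[Thm.~5.4]{QS14} yields $a_n(\mathcal C_{\varphi+\sigma})\ll e^{-c\sigma\sqrt{n\log n}}$. The operator-norm difference is controlled by a Carleson-type integral over the shell $\mathrm{Re}\,s\ll \sigma$ in $\mathbb C_0$; inserting the sharp two-sided estimate, the mass contributed by the dyadic slice $\mathrm{Re}\,s\asymp 2^{-k}$ is its imaginary-direction width $\asymp 2^{-k/k_1}$ times the Carleson mass per window $\asymp 2^{-k(\kappa-1)}$, leading after re-summation to a bound of order $\sigma^{2\eta_\varphi}$. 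Balancing both terms with $\sigma\asymp\sqrt{\log n/n}$ produces the claimed upper bound with the logarithmic loss. The factor $k_1/(2(k_1-1))$ in $\eta_\varphi$ emerges precisely from the ratio between the imaginary-direction stretch $\varepsilon^{1/k_1}$ and the Carleson scale $\varepsilon$.

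\textbf{Lower bound.} Pick $\varepsilon>0$ small and a maximal collection of points $w_m\in\mathbb C_{1/2}$ with $\mathrm{Re}\,w_m=\tfrac12+\varepsilon$ lying in $\overline{\varphi(\mathbb C_0)}$, with imaginary parts separated by $\asymp\varepsilon$ along the interval of length $\asymp\varepsilon^{1/k_1}$ on which the push-forward of $\mu_\varphi$ concentrates at real-part scale $\varepsilon$; thus $N:=\#\{w_m\}\asymp\varepsilon^{1/k_1-1}$. The sharp Carleson estimate makes the $\Phi$-preimages of the corresponding Carleson windows pairwise disjoint anisotropic rectangles in $\mathbb T^d$ of comparable $\mathbf m_d$-mass and side lengths $(\varepsilon,\varepsilon^{1/k_2},\ldots,\varepsilon^{1/k_d})$. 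Feeding the test family $\{K_{w_m}\}$ of $\mathcal H^2$ reproducing kernels into the min-max principle
\[
a_N(\mathcal C_\varphi)^2=a_N(\mathcal C_\varphi^\ast)^2\geq \lambda_{\min}(G')/\lambda_{\max}(G),
\]
with $G$ and $G'$ the Gram matrices of $\{K_{w_m}\}$ and of $\{\mathcal C_\varphi^\ast K_{w_m}\}=\{K_{\varphi(w_m)}\}$, and checking that both matrices are essentially diagonal thanks to the anisotropic separation, yields $a_N(\mathcal C_\varphi)\gg\varepsilon^{(\kappa-1)/2}$. Substituting $\varepsilon\asymp N^{-k_1/(k_1-1)}$ gives the exponent $\eta_\varphi=(\kappa-1)k_1/(2(k_1-1))$ and the desired inequality.

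\textbf{Main obstacle.} The technical heart of the argument in both directions is the Gram-matrix / Carleson-packing analysis for the \emph{anisotropic} family of rectangles on $\mathbb T^d$ with side lengths $(\varepsilon,\varepsilon^{1/k_2},\ldots,\varepsilon^{1/k_d})$: the linear case of \cite{QS14} only requires rectangles with comparable sides, whereas here the distinct exponents $k_j$ force one to exploit the full two-sided asymptotics of $\Phi$ to show that the off-diagonal Gram entries decay fast enough. The imaginary-direction stretch $\varepsilon^{1/k_1}$ enters exactly at this point and is responsible for the characteristic factor $k_1/(2(k_1-1))$ in $\eta_\varphi$.
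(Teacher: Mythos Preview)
Your proposal has genuine gaps in both halves; the correct exponent $\eta_\varphi$ emerges in the paper through mechanisms quite different from the ones you sketch.

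\textbf{Upper bound.} The decomposition $\mathcal C_\varphi=\mathcal C_{\varphi+\sigma}+(\mathcal C_\varphi-\mathcal C_{\varphi+\sigma})$ is not how the paper (or \cite{QS14}) proceeds, and your two ingredient estimates are both unjustified. The decay $a_n(\mathcal C_{\varphi+\sigma})\ll e^{-c\sigma\sqrt{n\log n}}$ has no clear provenance for a $d$-variable symbol, and the claim $\|\mathcal C_\varphi-\mathcal C_{\varphi+\sigma}\|^2\ll\sigma^{2\eta_\varphi}$ is not supported by your dyadic heuristic: summing ``width $2^{-k/k_1}$ times Carleson mass per window $2^{-k(\kappa-1)}$'' yields an exponent $\kappa-1+1/k_1$, not $2\eta_\varphi=(\kappa-1)k_1/(k_1-1)$. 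The paper instead applies part~(a) of Lemma~\ref{lem:apqsmethod}: one chooses a Blaschke product $B$ of degree $n$ whose zeros lie on the boundary of the \emph{cusp} $\Omega_{k_1,\sigma,C}=\{|\mathrm{Im}\,s|^{k_1}\le C\,\mathrm{Re}\,s,\ \sigma\le\mathrm{Re}\,s\le C\}$ that contains the image of $\varphi-\tfrac12$ (this containment is the content of $\omega_\varphi=k_1$, deduced from boundary regularity). The crucial computation (Lemma~\ref{lem:apupperbound2}) is that the hyperbolic length of $\partial\Omega_{k_1,\sigma,C}$ is $\ll\sigma^{-(k_1-1)/k_1}$, so Lemma~\ref{lem:apupperbound1} gives $|B|\le\exp(-Cn\sigma^{(k_1-1)/k_1})$ on the cusp. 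Balancing this against the Carleson term $\sigma^{\kappa_\varphi-1}$ forces $\sigma\asymp(\log n/n)^{k_1/(k_1-1)}$, and the factor $k_1/(2(k_1-1))$ in $\eta_\varphi$ is exactly this hyperbolic-length exponent, not a ratio of Carleson scales.

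\textbf{Lower bound.} Your kernel argument is confused about domain versus image: you place the $w_m$ in $\overline{\varphi(\mathbb C_0)}$ and then write $\mathcal C_\varphi^\ast K_{w_m}=K_{\varphi(w_m)}$, which applies $\varphi$ to points already in the image. More importantly, even with this repaired, a min--max with only $N\asymp\varepsilon^{1/k_1-1}$ kernels in $\mathcal H^2$ cannot see the exponent $(\kappa-1)/2$: the Gram ratio $\lambda_{\min}(G')/\lambda_{\max}(G)$ compares $\zeta(2\,\mathrm{Re}\,\varphi(w_m))$ with $\zeta(2\,\mathrm{Re}\,w_m)$, and you have no control on $\mathrm{Re}\,w_m$. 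The paper's lower bound works on the \emph{polydisc} side. For each target $s_m=\tfrac12+\nu\delta+im\delta$ one constructs, via the inverse function theorem (Lemma~\ref{lem:aplowerbound1}), roughly $\prod_{j=2}^d\delta^{-(1-1/k_j)}$ distinct preimages $Z(\alpha)\in\mathbb D^d$ with all radii $\rho_j(\alpha)\asymp\delta$; this multiplicity is the entire point. Feeding $S_\delta$ and $Z_\delta$ into part~(b) of Lemma~\ref{lem:apqsmethod} and using Lemmas~\ref{lem:apinterpolation1}--\ref{lem:apinterpolation2} to bound the interpolation and Carleson constants, one gets
\[
a_n(\mathcal C_\varphi)^2\gg N_\Phi(s_m;Z)\,\zeta(2\,\mathrm{Re}\,s_m)\asymp\Bigl(\prod_{j=2}^d\delta^{-(1-1/k_j)}\Bigr)\cdot\delta^{d}\cdot\delta^{-1}=\delta^{\sum_{j=2}^d 1/k_j},
\]
and the choice $\delta=n^{-k_1/(k_1-1)}$ (so that $S_\delta$ has $n$ points) delivers $n^{-\eta_\varphi}$. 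Your ``anisotropic rectangles'' intuition is on the right track, but it must be implemented as a preimage count weighted by $H^2(\mathbb T^d)$-kernel norms, not as a Gram-matrix estimate for $\mathcal H^2$-kernels at image points.
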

This statement may be applied to several cases. 
\begin{cor}
	Let $\varphi(s)=c_1+\sum_{n=2}^N c_nn^{-s}\in\mathcal G$ have unrestricted range and complex dimension $d$. Let $\Phi$ be a minimal Bohr lift of $\varphi$ and assume that there exists a unique $w\in\TT^d$ such that $\mathrm{Re}\,\Phi(w)=0$ and that $J(\Phi,w)=d$. Then $$\left(\frac{1}{n}\right)^{(d-1)/2}\ll a_n(\mathcal C_\varphi)\ll \left(\frac{\log n}n\right)^{(d-1)/2}.$$ 
	\begin{proof}
		Under these assumptions, $\varphi$ is boundary regular at $w$ with $k_1=\ldots=k_d=2$. 
	\end{proof}
\end{cor}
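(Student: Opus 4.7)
The plan is to deduce this from Theorem~\ref{thm:approximationnumber} by verifying that the hypothesis $J(\Phi,w)=d$ forces $\varphi$ to be boundary regular at $w$ with all exponents $k_j=2$, after which a direct calculation shows $\eta_\varphi=(d-1)/2$.

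First I would carry out the usual reductions. Using the rotational invariance of $\mathbf{m}_d$ together with a vertical translation (both of which preserve all of the relevant hypotheses and the Carleson-measure analysis behind Theorem~\ref{thm:approximationnumber}), I may assume $w=\mathbf 1$ and $\Phi(\mathbf 1)=0$, and then expand $\Phi$ as in \eqref{eq:newform} around the boundary point. Setting $z_j=e^{i\theta_j}$ and $\phi(\theta)=\Phi(e^{i\theta_1},\ldots,e^{i\theta_d})$, the hypothesis $J(\Phi,\mathbf 1)=d$ means that the Hessian of $\mathrm{Re}\,\phi$ at $\vartheta=\mathbf 0$ has signature $(d,0)$, i.e.\ is positive definite. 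Diagonalising this quadratic form by an invertible linear change of variables produces linearly independent linear forms $\ell_1,\ldots,\ell_d$ on $\mathbb R^d$ such that
\[\mathrm{Re}\,\phi(\theta)=\sum_{j=1}^d \ell_j(\theta)^2 + R(\theta),\]
where $R(\theta)=O(|\theta|^3)$. Since the $\ell_j$ are independent linear forms, the sum $\sum_j \ell_j(\theta)^2$ is equivalent to $|\theta|^2$, so the cubic remainder $R$ can be written as $\sum_{j=1}^d o(\ell_j(\theta)^2)$ in any reasonable sense, matching the form \eqref{eq:apboundaryregular1} with $k_1=\cdots=k_d=2$.

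Next I would handle the imaginary part. Since $\Phi(\mathbf 1)=0$ is real, the expansion \eqref{eq:newform} yields $\mathrm{Im}\,\phi(\theta)=-\sum_{j=1}^d c_{e_j}\theta_j + O(|\theta|^2)$, where $e_j$ is the $j$-th unit multi-index. By Lemma~\ref{lem:nonneglin} each $c_{e_j}$ is a non-negative real and at least one is strictly positive, so the linear part is a non-zero real linear form in $\theta$. Expressing it in the basis $\ell_1,\ldots,\ell_d$, we obtain real coefficients $b_1,\ldots,b_d$, not all zero, with
\[\mathrm{Im}\,\phi(\theta)=b_1\ell_1(\theta)+\cdots+b_d\ell_d(\theta)+o\!\left(\sum_{j=1}^d |\ell_j(\theta)|\right).\]
Since $k_1=k_2=\cdots=k_d=2$, the ordering of the $\ell_j$ is irrelevant, so after relabelling if necessary we may arrange $b_1\neq0$, giving \eqref{eq:apboundaryregular2} with $\tau=\mathrm{Im}\,\Phi(w)$. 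This shows $\varphi$ is boundary regular at $w$.

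Finally I would compute the compactness index: with $k_1=\cdots=k_d=2$, the definition gives
\[\eta_\varphi=\eta_{\varphi,w}=\left(\sum_{j=2}^d \frac{1}{k_j}\right)\cdot\frac{k_1}{2(k_1-1)}=\frac{d-1}{2}\cdot 1=\frac{d-1}{2},\]
and the claimed two-sided bound on $a_n(\mathcal C_\varphi)$ follows immediately from Theorem~\ref{thm:approximationnumber}. There is no substantial obstacle here; the only small point requiring care is checking that the cubic-and-higher remainder in the expansion of $\mathrm{Re}\,\phi$ really falls into the form $\sum_j o(\ell_j^2)$ demanded by the definition, which is routine since the $\ell_j$ are independent and the remainder is $O(|\theta|^3)$.
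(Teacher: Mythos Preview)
Your proposal is correct and follows exactly the same approach as the paper's proof: verifying that $J(\Phi,w)=d$ forces boundary regularity with $k_1=\cdots=k_d=2$, then applying Theorem~\ref{thm:approximationnumber} with $\eta_\varphi=(d-1)/2$. The paper's own proof is the single line ``Under these assumptions, $\varphi$ is boundary regular at $w$ with $k_1=\ldots=k_d=2$''; you have simply written out the verification in full.
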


In particular, this corollary covers the result of Queff\'{e}lec and Seip for linear symbols \eqref{eq:linearcase}, as well as the map $\varphi_1$ given in \eqref{eq:simplemixedcase}. We may also apply Theorem \ref{thm:approximationnumber} to the maps considered in Theorem \ref{thm:polynomials}. In this case, one has simply $\ell_j(\theta)=\theta_j$ (up to a reordering of the terms). 

Another interesting application of Theorem \ref{thm:approximationnumber} is that we may distinguish the Schatten classes of bounded linear operators on $\mathcal{H}^2$ using composition operators, as mentioned in the introduction. 
\begin{proof}
	[Proof of Corollary~\ref{cor:schatten}] Let $p'<q'$ and $\veps>0$ be such that $p\leq p'/2$ and $\left(\frac12+\veps\right)q'\leq q$. Then let $d\geq 2$ and $k\geq 2$ even such that $$p'<\frac{d-1}{k}<q'\textrm{ and }\frac12<\frac{k}{2(k-1)}<\frac12+\veps.$$ By Lemma \ref{lem:1dimpoly}, we know that there exists a boundary regular polynomial $\Phi:\TT^d\to\mathbb C_0$ such that $\mathrm{Re}\,\Phi(w)=0$ if and only if $w=\mathbf 1$ and $$\Phi\big(e^{i\theta_1},\ldots,e^{i\theta_d}\big)=\theta_1^{k}+\ldots+\theta_d^k+o\big(\theta_1^{k}\big)+\ldots+o\big(\theta_d^k\big).$$ Letting $\varphi\in\mathcal G$ any map such that $\Phi$ is a minimal Bohr lift of $\varphi$, we immediately get
	\[\left(\frac 1n\right)^{\frac{d-1}k\times \frac{k}{2(k-1)}}\ll a_n(\mathcal C_\varphi)\ll \left(\frac {\log n}n\right)^{\frac{d-1}k\times \frac{k}{2(k-1)}},\]
	which completes the proof. 
\end{proof}
Theorem \ref{thm:approximationnumber} may be also applied to many other maps. We will consider here the map $\varphi_2$ given in \eqref{eq:simplemixedcase}. Its boundary regularity is different than that of $\varphi_1$, and hence the degree of compactness is also different. 
\begin{ex}
	Let $\varphi_2(s)=13/2-4\cdot 2^{-s}-4\cdot 3^{-s}+2\cdot 6^{-s}$ as in \eqref{eq:simplemixedcase} and let $\Phi$ be its minimal Bohr lift. It can be shown that $\mathrm{Re}\,\Phi(w)=0$ for $w\in\TT^2$ if and only if $w=(1,1)$, and 
	\begin{eqnarray*}
		\mathrm{Re}\,\Phi\big(e^{i\theta_1},e^{i\theta_2}\big)&=&\ell_1(\theta)^4+\ell_2(\theta)^2+o\big(\ell_1^4(\theta)\big)+o\big(\ell_2^2(\theta)\big),\\
		\mathrm{Im}\,\Phi\big(e^{i\theta_1},e^{i\theta_2}\big)&=&-2\ell_1(\theta)+o\big(|\ell_1(\theta)|+|\ell_2(\theta)|\big), 
	\end{eqnarray*}
	where $\ell_1(\theta)=\theta_1+\theta_2$ and $\ell_2(\theta)=\theta_1-\theta_2$. Hence $\eta_{\varphi_2} = (1/2)\times(4/6) = 1/3$. 
\end{ex}
The remaining part of this section is devoted to the proof of Theorem \ref{thm:approximationnumber}. We use the scheme introduced by Queff\'elec and Seip in \cite{QS14} in the context of Dirichlet series (see also \cite{QS15a} for similar works on the classical Hardy space of the disk). Their method is based on Carleson measures, interpolation sequences and model spaces. In Subsection \ref{sec:aptools}, we survey these tools and give a couple of lemmas. Subsection \ref{sec:apupperbound} is devoted to the proof of the upper bound, in a more general context, whereas Subsection \ref{sec:aplowerbound} will be devoted to the lower bound.

\subsection{Tools} \label{sec:aptools} 
\subsubsection*{The Hyperbolic Metric} The pseudo-hyperbolic metric on the half-plane $\mathbb C_0$ is defined by $$\rho(z,w)=\left|\frac{z-w}{z+\overline{w}}\right|=\frac{1-e^{-d(z,w)}}{1+e^{d(z,w)}}$$ where $d(z,w)$ is the hyperbolic distance between two points $z$ and $w$ in $\mathbb C_0$. The hyperbolic length of a curve $\Gamma\subset\mathbb C_0$ is given by the integral $$L_p(\Gamma)=\int_{\Gamma}\frac{|dz|}{\mathrm{Re}\, z}.$$

\subsubsection*{Carleson Measures and Interpolating Sequences} Let $H$ be a Hilbert space of functions defined on some measurable set $\Omega$ in $\CC$. A non-negative Borel measure $\mu$ on $\Omega$ is a \emph{Carleson measure} for $H$ if there exists some constant $C>0$ such that $$\int_{\Omega}|f(z)|^2d\mu(z)\leq C\|f\|_H^2,$$ for every $f$ in $H$. The smallest possible $C$ will be called the \emph{Carleson norm} of $\mu$ with respect to $H$ and will be denoted by $\|\mu\|_{\mathcal C,H}$. 

We also assume that the linear point evaluation is bounded at any $z\in\Omega$. Then $H$ admits a reproducing kernel $K_z^H\in H$ for any $z\in\Omega$ which satisfies $f(z)=\left\langle f,K_z^H\right\rangle$ for every $f\in H$. We then say that a sequence $Z=(z_m)$ of distinct points in $\Omega$ is a \emph{Carleson sequence} for $H$ if the measure $$\mu_{Z,H}:=\sum_m \left\|K_{z_m}^H\right\|^{-2}_H\delta_{z_m}$$ is a \emph{Carleson measure} for $H$.

We say that a sequence $Z=(z_m)$ of distinct points in $\Omega$ is an \emph{interpolating sequence} for $H$ if the interpolation problem $f(z_m)=a_m$ has a solution $f\in H$ whenever the admissibility condition $$\sum_m |a_m|^2 \left\|K_{z_m}^H\right\|^{-2}_H<\infty$$
is satisfied. By the open mapping theorem, if $Z$ is an interpolating sequence for $H$, there is a constant $C>0$ such that we can solve $f(z_m)=a_m$ with $f$ satisfying $$\|f\|_H\leq C\left(\sum_m |a_m|^2 \left\|K_{z_m}^H\right\|^{-2}_H\right)^{1/2}.$$ The smallest constant $C$ with this property will be called the \emph{constant of interpolation} of $Z$ and will be denoted by $M_H(Z)$.

We shall consider the two spaces $H=\mathcal H^2$ and $H=H^2\left(\mathbb T^d\right)$. Then we have, respectively, $\Omega=\CC_{1/2}$ and $\Omega=\DD^d$, and moreover
\[\big\|K_s^{\mathcal H^2}\big\|^{-2}=[\zeta(2\mathrm{Re}\,s)]^{-1} \qquad \text{and} \qquad \big\|K_z^{H^2\left(\TT^d\right)}\big\|^{-2}=\prod_{j=1}^d (1-|z_j|^2).\]
We will need the three following lemmas.
\begin{lem}
	\label{lem:apcarleson1} Let $\mu$ be a Borel measure on $\overline{\CC_0}$, let $\sigma\in(0,1)$ and $R>0$. Assume that $\mu$ is supported on the rectangle $0\leq\mathrm{Re}\, s\leq \sigma$, $|\mathrm{Im}\, s|\leq R.$ Then $$\|\mu\|_{\mathcal C,\mathcal H^2}\ll_R \sup_{\veps>0,\ \tau\in\RR}\frac{\mu\big(Q(\tau,\veps)\big)}{\veps}\leq 2\sup_{\veps\in(0,\sigma),\ \tau\in\RR}\frac{\mu\big(Q(\tau,\veps)\big)}{\veps}.$$ 
	\begin{proof}
		The first inequality is Lemma 2.3 in \cite{QS14} (the involved constant does not depend on $\sigma\in(0,1)$). The second follows from the inequality $$\sup_{\tau\in \RR}\frac{\mu\big(Q(\tau,2^{k+1}\sigma)\big)}{2^{k+1}\sigma}\leq\sup_{\tau\in \RR}\frac{\mu\big(Q(\tau,2^{k}\sigma)\big)}{2^{k}\sigma},$$ valid for any $k\geq 0$. Indeed, for any $\tau\in\mathbb R$ and any $k\geq 0$, we may find $\tau_1,\tau_2\in\mathbb R$ such that $$\mu\big(Q(\tau,2^{k+1}\sigma)\big)=\mu\big(Q(\tau_1,2^{k}\sigma)\big)+\mu\big(Q(\tau_2,2^{k}\sigma)\big),$$ since the support of $\mu$ is contained in $0\leq\mathrm{Re}\, s\leq \sigma$. 
	\end{proof}
\end{lem}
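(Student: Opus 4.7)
The plan is to handle the two inequalities separately, as they are essentially independent. The first inequality is a Carleson embedding theorem for $\mathcal{H}^2$ restricted to measures supported in a bounded strip, and I would invoke Lemma~2.3 of \cite{QS14} directly. Morally, the dependence of the constant on $R$ reflects that on a bounded vertical strip, the embedding of $\mathcal{H}^2$ behaves like that of the classical Hardy space $H^2(\CC_0)$ via vertical translates and the submean value property, after which the classical Carleson theorem on the half-plane supplies the estimate.

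For the second inequality, the goal is to show that the supremum over all $\veps > 0$ is essentially attained on $\veps \in (0,\sigma)$, at the cost of a factor of $2$. The key observation is that because $\mathrm{supp}\,\mu \subset \{0 \leq \mathrm{Re}\, s \leq \sigma\}$, a Carleson square $Q(\tau,\veps)$ with $\veps > \sigma$ intersects $\mathrm{supp}\,\mu$ only in the rectangle $[0,\sigma]\times[\tau-\veps/2,\tau+\veps/2]$, whose vertical side of length $\veps$ can be split into two intervals of length $\veps/2$. At the dyadic scale $\veps = 2^{k+1}\sigma$ this yields a decomposition
$$\mu\bigl(Q(\tau,2^{k+1}\sigma)\bigr) = \mu\bigl(Q(\tau_1,2^k\sigma)\bigr) + \mu\bigl(Q(\tau_2,2^k\sigma)\bigr)$$
for suitable $\tau_1,\tau_2 \in \RR$, hence the doubling inequality
$$\sup_{\tau \in \RR}\frac{\mu(Q(\tau,2^{k+1}\sigma))}{2^{k+1}\sigma} \leq \sup_{\tau \in \RR}\frac{\mu(Q(\tau,2^k\sigma))}{2^k\sigma}.$$
Iterating from $k=0$ and then comparing a general $\veps > \sigma$ to the next dyadic multiple of $\sigma$ via monotonicity of $\veps \mapsto \mu(Q(\tau,\veps))$ loses at most a factor of $2$, which yields the stated bound.

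There is no serious obstacle: the substantive content sits in the cited Carleson embedding, while the second step is a short combinatorial splitting that uses the strip support of $\mu$ in the real direction only. The one thing to verify carefully is the factor of $2$, which arises from rounding $\veps$ up to the nearest $2^k\sigma$ and then applying the iterated doubling bound.
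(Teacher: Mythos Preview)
Your proposal is correct and follows essentially the same approach as the paper: the first inequality is cited from \cite{QS14}, and the second is obtained via the identical dyadic splitting $\mu(Q(\tau,2^{k+1}\sigma))=\mu(Q(\tau_1,2^{k}\sigma))+\mu(Q(\tau_2,2^{k}\sigma))$ (using that $\mathrm{supp}\,\mu\subset\{0\leq\mathrm{Re}\,s\leq\sigma\}$) and the resulting doubling bound. Your added remark about rounding a general $\veps>\sigma$ to the nearest dyadic multiple of $\sigma$ just makes explicit what the paper leaves implicit.
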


\begin{lem}
	\label{lem:apinterpolation1} Let $\nu>0$. There exists $C>0$ such that, for any $\delta\in(0,1/\nu)$,  $M_{\mathcal H^2}(S_\delta)\leq C$ where $S_\delta=(s_m)_{m=1}^{1/\delta}$ with $s_m=\frac12+\nu\delta+im\delta$. 
	\begin{proof}
		The proof of this lemma can be found in \cite[Sec~8.2]{QS14}. 
	\end{proof}
\end{lem}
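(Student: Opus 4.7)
The plan is a duality argument. A bound $M_{\mathcal H^2}(S_\delta)\leq C$ is equivalent to the lower frame inequality
\[
\Bigl\|\sum_{m=1}^{1/\delta} b_m K^{\mathcal H^2}_{s_m}\Bigr\|_{\mathcal H^2}^2 \geq C^{-2}\sum_{m=1}^{1/\delta}|b_m|^2\,\bigl\|K^{\mathcal H^2}_{s_m}\bigr\|_{\mathcal H^2}^{-2}
\]
holding uniformly in $(b_m)$. Using $K^{\mathcal H^2}_w(s)=\zeta(s+\overline w)$, the evaluations $s_k+\overline{s_m}=1+2\nu\delta+i(k-m)\delta$, the Dirichlet series $\zeta(1+w)=\sum_{n\geq 1}n^{-1-w}$, and the identity $\|K^{\mathcal H^2}_{s_m}\|_{\mathcal H^2}^{-2}=\zeta(1+2\nu\delta)^{-1}$ (which is comparable to $2\nu\delta$ uniformly in $\delta\in(0,1/\nu)$), this reduces to proving
\[
\sum_{n\geq 1} n^{-1-2\nu\delta}\,|F(\delta\log n)|^2 \;\geq\; c_\nu\,\delta^{-1}\|F\|_{L^2(\mathbb T)}^2
\]
for every trigonometric polynomial $F(t):=\sum_{m=1}^{1/\delta} b_m e^{-imt}$ of degree at most $1/\delta$, with $c_\nu>0$ depending only on $\nu$; here $\|F\|_{L^2(\mathbb T)}^2=2\pi\sum|b_m|^2$ by Parseval.

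The key idea is to interpret the left-hand side as a weighted quadrature. Setting $u_n:=\delta\log n$ and noting $n^{-1-2\nu\delta}\asymp e^{-2\nu u_n}\,(u_{n+1}-u_n)/\delta$, the sum equals, up to fixed multiplicative constants,
\[
\delta^{-1}\sum_{n\geq 1} e^{-2\nu u_n}\,|F(u_n)|^2\,(u_{n+1}-u_n),
\]
which is a Riemann-like sum for $\delta^{-1}\int_0^\infty e^{-2\nu u}|F(u)|^2\,du$. Since $u_{n+1}-u_n=\delta\log\frac{n+1}{n}\leq\delta\log 2<\pi\delta$, the samples are dense enough, relative to the Nyquist spacing $\pi\delta$ for a trigonometric polynomial of degree $1/\delta$, for a Marcinkiewicz--Zygmund inequality to apply: for every window of length $2\pi$,
\[
\sum_{u_n\in[2\pi j,\,2\pi(j+1)]}|F(u_n)|^2\,(u_{n+1}-u_n)\;\gtrsim\;\|F\|_{L^2(\mathbb T)}^2,
\]
with a constant depending only on the ratio $\log 2/\pi$, hence independent of $\delta$.

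Incorporating the bound $e^{-2\nu u_n}\geq e^{-4\pi\nu(j+1)}$ on the $j$-th window and summing the resulting geometric series in $j\geq 0$ yields the inequality with $c_\nu\sim e^{-4\pi\nu}/(1-e^{-4\pi\nu})$, completing the argument. The main obstacle is the Marcinkiewicz--Zygmund step: one must verify that the samples $\{u_n\}_{n\geq 1}$ on the real line form, within every window of length $2\pi$, a uniform sampling set for trigonometric polynomials of degree at most $1/\delta$, with the MZ constants independent of $\delta$. This reduces to the uniform gap estimate $\delta\log((n+1)/n)\leq\delta\log 2<\pi\delta$, which also controls the ``wrap-around'' gap between successive periods thanks to the contiguity of the samples across the boundaries $u=2\pi j$. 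A naive Bernstein/Riemann-sum comparison is insufficient because the error term is comparable to the main term; the MZ inequality is the correct tool that bypasses this difficulty.
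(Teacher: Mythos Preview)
The paper's own proof is a one-line citation to \cite[Sec.~8.2]{QS14}, so your proposal supplies substantially more than the paper does. Your argument is sound, with one correction: the duality you display has the wrong normalisation. The bound $M_{\mathcal H^2}(S_\delta)\le C$ is equivalent to
\[
\Bigl\|\sum_m b_m K^{\mathcal H^2}_{s_m}\Bigr\|^2_{\mathcal H^2} \;\ge\; C^{-2}\sum_m |b_m|^2\,\bigl\|K^{\mathcal H^2}_{s_m}\bigr\|^{2}_{\mathcal H^2},
\]
with $\|K_{s_m}\|^{2}$ rather than $\|K_{s_m}\|^{-2}$ on the right. This is harmless in the present setting because all the $s_m$ share the same real part and hence equal kernel norm; moreover your subsequent target inequality
\[
\sum_{n\ge 1} n^{-1-2\nu\delta}\,|F(\delta\log n)|^2 \;\ge\; c_\nu\,\delta^{-1}\|F\|_{L^2(\mathbb T)}^2
\]
is exactly what the \emph{correct} normalisation produces (since $\|K_{s_m}\|^2=\zeta(1+2\nu\delta)\asymp(2\nu\delta)^{-1}$), so the slip is purely typographical.

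The remainder of your sketch is correct: expanding the Gram form via the Dirichlet series of $\zeta$, recognising the result as a weighted quadrature at the nodes $u_n=\delta\log n$, and invoking a Marcinkiewicz--Zygmund lower sampling inequality for trigonometric polynomials of degree at most $1/\delta$. The gap bound $u_{n+1}-u_n\le\delta\log 2$ gives a fixed oversampling ratio $\log 2/\pi<1$, so the MZ constants are indeed uniform in $\delta$; a single window $[0,2\pi]$ already suffices for the lower bound, and the geometric sum over $j\ge0$ is superfluous. Since the paper offers no argument of its own here, there is nothing further to compare.
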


\begin{lem}
	\label{lem:apinterpolation2} Let $C_1,C_2>0$. There exists $D>0$ such that for any $\delta>0$ and any (finite) sequence 
	\[Z=\big(Z(\alpha)\big)=\big((1-\rho_1(\alpha))e^{i\theta_1(\alpha)},\,\ldots,\,(1-\rho_d(\alpha))e^{i\theta_d(\alpha)}\big)\]
	in $\mathbb{D}^d$ satisfying
	\begin{itemize}
		\item $\sup_{j=1,\ldots,d}|\theta_j(\alpha)-\theta_j(\beta)|\geq C_1\delta$, when $\alpha\neq \beta$,
		\item $\rho_j(\alpha)\leq C_2\delta$, for any $\alpha$ and $j=1,\,\ldots,\,d$,
	\end{itemize}
	we have $\left\|\mu_{Z,H^2(\TT^d)}\right\|_{\mathcal C,H^2(\TT^d)}\leq D$. 
	\begin{proof}
		To each point $Z(\alpha)$, we associate a rectangle $R_\alpha$ on the distinguish boundary $\TT^d$ centered at $$\left(\frac{z_1(\alpha)}{|z_1(\alpha)|},\,\ldots,\,\frac{z_d(\alpha)}{|z_d(\alpha)|}\right),$$ 
		with side lengths $2(1-|z_1(\alpha)|),\,\ldots,\,2(1-|z_d(\alpha)|)$. By Chang's characterization of Carleson measures on the polydisc (see \cite{BCL87} or \cite{Ch79}), it is enough to show that we for all open sets $\mathcal U$ of $\TT^d$ have 
		\[\sum_{R_\alpha\subset\mathcal U}\mathbf{m}_d(R_\alpha)\leq D\mathbf{m}_d(\mathcal U).\]
		If $R$ is some rectangle in $\TT^d$ and $\lambda>0$, denote by $\lambda R$ the rectangle with the same center and side lengths multiplied by $\lambda$. Then our assumptions on $Z$ imply that there exists some $\lambda\in(0,1)$ depending only on $C_1$ and $C_2$ such that the rectangles $R_\alpha$ are pairwise disjoint. Thus 
		\[\sum_{R_\alpha\subset\mathcal U}\mathbf m_d(\mathcal U)\leq\sum_{R_\alpha\subset\mathcal U}\frac{1}{\lambda^d}\mathbf m_d(\lambda R_\alpha)\\
			\leq\frac1{\lambda^d}\mathbf m_d\left(\bigcup_{R_\alpha\subset\mathcal U}\lambda R_\alpha\right)\\
			\leq\frac1{\lambda^d}\mathbf m_d(\mathcal U),\]
			which completes the proof with $D = 1/\lambda^d$.
	\end{proof}
\end{lem}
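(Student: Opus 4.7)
The plan is to verify the Carleson--Chang condition characterizing Carleson measures on the polydisc, which reduces the estimate on the operator norm of $\mu_{Z,H^2(\mathbb{T}^d)}$ to a combinatorial packing inequality that follows cleanly from the two hypotheses. First I would recall from \cite{BCL87,Ch79} that a positive discrete measure $\sum_\alpha m_\alpha \delta_{Z(\alpha)}$ on $\mathbb{D}^d$ is Carleson for $H^2(\mathbb{T}^d)$ if and only if there is a constant $D'$ such that $\sum_{R_\alpha \subset \mathcal{U}} m_\alpha \leq D'\, \mathbf{m}_d(\mathcal{U})$ for every open $\mathcal{U} \subset \mathbb{T}^d$, where $R_\alpha$ is the Carleson rectangle on $\mathbb{T}^d$ naturally associated to $Z(\alpha)$. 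With $R_\alpha$ chosen as in the statement (centered at $(z_j(\alpha)/|z_j(\alpha)|)_j$ with $j$-th side length $2\rho_j(\alpha)$), one has $\|K_{Z(\alpha)}^{H^2(\mathbb{T}^d)}\|^{-2} = \prod_j(1-|z_j(\alpha)|^2) \asymp \prod_j \rho_j(\alpha) \asymp \mathbf{m}_d(R_\alpha)$, so the question reduces to proving the geometric inequality $\sum_{R_\alpha \subset \mathcal{U}} \mathbf{m}_d(R_\alpha) \lesssim \mathbf{m}_d(\mathcal{U})$, with an implicit constant depending only on $C_1, C_2, d$.

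The heart of the argument is to use the two hypotheses to produce a disjointness property. I would fix $\lambda := \min\!\bigl(C_1/(4 C_2),\, 1\bigr)$ and claim that the shrunken rectangles $\{\lambda R_\alpha\}_\alpha$ are pairwise disjoint. Indeed, for any $\alpha \neq \beta$ the separation hypothesis produces some coordinate $j$ with $|\theta_j(\alpha) - \theta_j(\beta)| \geq C_1 \delta$; in the $j$-th direction the arcs $\lambda R_\alpha$ and $\lambda R_\beta$ have lengths bounded by $2\lambda\, C_2 \delta \leq C_1 \delta /2$ but centers at distance at least $C_1 \delta$, so their $j$-th projections are disjoint. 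Crucially, it is the uniform upper bound $\rho_j(\alpha) \leq C_2 \delta$ that keeps these projections small enough, while the lower-bound separation keeps the centers apart.

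With disjointness in hand, the conclusion is immediate. Since $\lambda \leq 1$ we have $\lambda R_\alpha \subset R_\alpha$, so $R_\alpha \subset \mathcal{U}$ implies $\lambda R_\alpha \subset \mathcal{U}$, and then
\[
\sum_{R_\alpha \subset \mathcal{U}} \mathbf{m}_d(R_\alpha) \;=\; \lambda^{-d} \sum_{R_\alpha \subset \mathcal{U}} \mathbf{m}_d(\lambda R_\alpha) \;\leq\; \lambda^{-d}\, \mathbf{m}_d\!\Bigl(\bigsqcup_{R_\alpha \subset \mathcal{U}} \lambda R_\alpha\Bigr) \;\leq\; \lambda^{-d}\, \mathbf{m}_d(\mathcal{U}).
\]
This furnishes the Carleson--Chang constant $D'$, and unwinding the comparison $m_\alpha \asymp \mathbf{m}_d(R_\alpha)$ yields the desired $D$, depending only on $C_1$, $C_2$, and $d$.

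The main obstacle is less the elementary disjointness computation than the appeal to the correct polydisc Carleson theorem: for $d \geq 2$ the one-box supremum condition is \emph{not} equivalent to the Carleson property, and one genuinely needs to test against arbitrary open sets, which is precisely the content of Chang's theorem. A minor additional point is the comparison $\prod_j(1-|z_j(\alpha)|^2) \asymp \prod_j \rho_j(\alpha)$, which is valid as long as the $|z_j(\alpha)|$ stay bounded away from $0$; the (at most finitely many) exceptional $\alpha$ with some $|z_j(\alpha)|$ small contribute only a bounded correction to the Carleson norm and may be absorbed into $D$.
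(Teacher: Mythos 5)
Your proposal is correct and follows essentially the same route as the paper's proof: associate Carleson rectangles $R_\alpha$ to the points $Z(\alpha)$, invoke Chang's characterization to reduce to a packing inequality over open sets, show that a fixed shrinking factor $\lambda$ makes the $\lambda R_\alpha$ pairwise disjoint, and unwind. The only difference is that you write out the explicit value of $\lambda$ and add a (superfluous) caveat about small $|z_j(\alpha)|$: since $1-|z_j|^2=(1-|z_j|)(1+|z_j|)$ and $1\leq 1+|z_j|\leq 2$, the comparison $\prod_j(1-|z_j(\alpha)|^2)\asymp\prod_j\rho_j(\alpha)$ holds with constant $2^d$ independently of where the $z_j$ lie, so no exceptional points need to be absorbed.
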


\subsubsection*{The Queff\'{e}lec--Seip Method.} We have to introduce additional conventions. For $\varphi\in\mathcal G$ and $\Omega$ a compact subset of $\CC_0$, we denote by $\mu_{\varphi,\Omega}$ the non-negative Borel measure on $\overline{\CC_0}$ defined by 
$$\mu_{\varphi,\Omega}(E):= \mathbf{m}_\infty \left(\left\{z \in \mathbb{T}^\infty \,:\, \Phi(z) \in E\backslash \Omega\right\}\right).$$
Next, assume that $\varphi$ has complex dimension $d$ and Bohr lift $\Phi:\CC^d\to\CC$. Let $S=(s_m)$ be a sequence of $n$ points in $\CC_{1/2}$ and let $Z$ be a finite sequence of points in $\DD^d$ such that $\Phi(Z)=S-\frac12$. We set 
$$ N_{\Phi}(s_m;Z):=\sum_{z\in Z\cap \Phi^{-1}(s_m-1/2)}\big\|K_z^{H^2(\TT^d)}\big\|^{-2}.$$
We state Theorem 4.1 of \cite{QS14} as the forthcoming lemma (we have modified it slightly to take into account our normalization). 
\begin{lem}
	\label{lem:apqsmethod} Let $\varphi(s)=\sum_{n=1}^{\infty}c_nn^{-s}\in\mathcal G$ such that $\varphi(\CC_0)$ is bounded. 
	\begin{itemize}
		\item[(a)] Let $\sigma>0$ and $\Omega$ be a compact subset of $\overline{\CC_\sigma}$. Let $B$ be a Blaschke product on $\CC_0$ whose zeros lie in $\Omega$. Then $$a_n(\mathcal C_\varphi)\leq \left(\sup_{s\in\Omega}|B(s)|^2\zeta(1+2\sigma)+\sup_{\veps>0,\tau\in\mathbb R}\frac{\mu_{\varphi,\Omega}(Q(\tau,\veps))}{\veps}\right)^{1/2}.$$ 
		\item[(b)] Assume that $\varphi$ has complex dimension $d$. Let $S$ and $Z$ be finite sets in respectively $\CC_{1/2}$ and $\DD^d$ such that $\Phi(Z)=S-\frac12$. Then $$a_n(\mathcal C_\varphi)\geq \left[M_{\mathcal H^2}(S)\right]^{-1}\left\|\mu_{Z,H^2(\TT^d)}\right\|^{-1/2}_{\mathcal C,H^2(\TT^d)}\inf_m \left[N_\Phi(s_m;Z)\zeta(2\mathrm{Re}\,s_m)\right]^{1/2}.$$ 
	\end{itemize}
\end{lem}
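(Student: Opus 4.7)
The plan is to establish the two bounds of Theorem~\ref{thm:approximationnumber} separately by invoking parts (a) and (b) of Lemma~\ref{lem:apqsmethod}, with the geometric inputs supplied by the boundary-regular expansions at the unique boundary point $w$. After a rotation we may assume $w=\mathbf 1$ and, with a linear change of coordinates, put the expansions (\ref{eq:apboundaryregular1})--(\ref{eq:apboundaryregular2}) in diagonal form
\[
\mathrm{Re}\,\Phi\bigl(e^{i\theta}\bigr)=\theta_1^{k_1}+\cdots+\theta_d^{k_d}+o(\cdot),\qquad \mathrm{Im}\,\Phi\bigl(e^{i\theta}\bigr)-\tau=b_1\theta_1+\cdots+b_d\theta_d+o(\cdot),
\]
with $b_1\neq 0$ and $k_1\geq\cdots\geq k_d$ all even. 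Write $A:=\sum_{j=2}^d 1/k_j$, so that $\eta_\varphi=A\cdot k_1/(2(k_1-1))$. The proof of Theorem~\ref{thm:polynomials} then supplies the workhorse volume estimate
\[
\mathbf m_d\bigl(\{\theta\in\mathcal V:\Phi(e^{i\theta})\in Q(\tau',\varepsilon)\}\bigr)\ll \varepsilon^{1+A}
\]
on a fixed neighbourhood $\mathcal V$ of $\mathbf 0$, via $|\theta_j|\ll\varepsilon^{1/k_j}$ for $j\geq 2$ from the real part and Lemma~\ref{LEMVOLUME1} applied to $b_1\theta_1$ in the imaginary part. Since $w$ is the only boundary point, $\mu_\varphi$ stays uniformly in $\{\mathrm{Re}\,s\geq c\}$ outside a neighbourhood of $w$.

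For the upper bound I apply Lemma~\ref{lem:apqsmethod}(a) with a scale $\sigma=\sigma_n$ to be chosen. Take $\Omega=\Omega_\sigma\subset\overline{\CC_\sigma}$ compact, containing $\Phi(\TT^\infty)\cap\overline{\CC_\sigma}$ inside a bounded horizontal strip, so that $\mu_{\varphi,\Omega}$ is supported in the thin slab $\Phi(\TT^\infty)\cap\{0\leq\mathrm{Re}\,s<\sigma\}$. Partitioning any $Q(\tau',\varepsilon)$ with $\varepsilon\geq\sigma$ into $\lceil\varepsilon/\sigma\rceil$ sub-squares of side $\sigma$ (each of $\mu_{\varphi,\Omega}$-mass $\ll \sigma^{1+A}$ by the volume estimate) and using Lemma~\ref{lem:apcarleson1} gives $\sup_{\tau',\varepsilon}\varepsilon^{-1}\mu_{\varphi,\Omega}(Q(\tau',\varepsilon))\ll \sigma^A$. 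For the Blaschke product $B$, I place its $n$ zeros along the image curve $\gamma(t):=\Phi(e^{it},1,\ldots,1)$ inside $\Omega$: this ``slow'' curve reaches depth $\sigma$ when $|t|\asymp \sigma^{1/k_1}$, so its pseudo-hyperbolic length in $\Omega$ is $\asymp\sigma^{-(k_1-1)/k_1}$; packing the zeros at fixed pseudo-hyperbolic spacing on $\gamma$ yields $\sup_\Omega|B|^2\ll \exp(-cn\sigma^{(k_1-1)/k_1})$. With $\zeta(1+2\sigma)\asymp \sigma^{-1}$, the Blaschke contribution is $\ll \sigma^{-1}\exp(-cn\sigma^{(k_1-1)/k_1})$, and balancing with $\sigma^A$ forces $\sigma\asymp(\log n/n)^{k_1/(k_1-1)}$, at which both terms equal $(\log n/n)^{A k_1/(k_1-1)}=(\log n/n)^{2\eta_\varphi}$.

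For the lower bound I apply Lemma~\ref{lem:apqsmethod}(b). Given $n$ large, set $\delta:=n^{-k_1/(k_1-1)}$ and take $S=(s_m)_{m=1}^n$ as the first $n$ terms of the arithmetic progression $s_m=\tfrac12+\delta+im\delta$; Lemma~\ref{lem:apinterpolation1} (with $\nu=1$ and spacing $\delta$) ensures $M_{\mathcal H^2}(S)\lesssim 1$, since truncating an interpolating sequence preserves the interpolation constant. For each $s_m$ and each multi-index $\beta=(\beta_2,\ldots,\beta_d)$ with $|\beta_j\delta|\lesssim\delta^{1/k_j}$, construct a preimage $z_{m,\beta}\in\DD^d$ of $s_m-\tfrac12$ as follows: set $\theta_j=\beta_j\delta$ for $j\geq 2$ (so $\theta_j^{k_j}\lesssim\delta$), solve for $\theta_1$ from the imaginary equation $\mathrm{Im}\,\Phi(e^{i\theta})=\mathrm{Im}(s_m-\tfrac12)$ via the implicit function theorem (possible because $b_1\neq 0$), and choose radii $r_j\asymp \delta$ so that $\mathrm{Re}\,\Phi(z_{m,\beta})=\delta$. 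This produces $\asymp\prod_{j\geq 2}\delta^{1/k_j-1}=\delta^{A-(d-1)}$ preimages per $s_m$. Lemma~\ref{lem:apinterpolation2} applied with its parameter equal to $\delta$ gives $\|\mu_{Z,H^2(\TT^d)}\|_{\mathcal C,H^2(\TT^d)}\lesssim 1$, because both the angular separations and the radii are $\asymp\delta$. Each $\|K_{z_{m,\beta}}^{H^2(\TT^d)}\|^{-2}\asymp \prod_j r_j\asymp\delta^d$, so $N_\Phi(s_m;Z)\asymp \delta^{A-(d-1)}\cdot\delta^d=\delta^{A+1}$; combined with $\zeta(2\mathrm{Re}\,s_m)\asymp \delta^{-1}$ this gives $N_\Phi(s_m;Z)\zeta(2\mathrm{Re}\,s_m)\asymp\delta^A$. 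Substituting $\delta=n^{-k_1/(k_1-1)}$ yields $a_n(\mathcal C_\varphi)\gg \delta^{A/2}=n^{-A k_1/(2(k_1-1))}=n^{-\eta_\varphi}$.

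The main obstacle is the sharp Blaschke-product estimate in the upper bound: the exponent $k_1/(k_1-1)$ in the optimal scale $\sigma$ emerges only after identifying the pseudo-hyperbolic length $\sigma^{-(k_1-1)/k_1}$ of the slow boundary curve $\gamma$, which is a genuinely quantitative geometric input beyond the raw Carleson estimate $\sigma^A$. For the lower bound, the essential observation is that allowing only one preimage per $s_m$ would yield a strictly weaker bound; to saturate $\eta_\varphi$ one must pack the $\asymp\delta^{A-(d-1)}$ preimages afforded by the $(d-1)$ transverse coordinates $(\theta_2,\ldots,\theta_d)$, and the compatibility of this multiplicity with the Carleson hypothesis of Lemma~\ref{lem:apinterpolation2} hinges on the fact that the natural lattice step $\delta$ in each transverse direction coincides with the radial scale $r_j\asymp\delta$ chosen to match the real-part equation.
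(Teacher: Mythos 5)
Your proposal does not prove Lemma~\ref{lem:apqsmethod} at all. You have instead written a sketch of the \emph{application} of Lemma~\ref{lem:apqsmethod} to Theorem~\ref{thm:approximationnumber}: you choose a scale $\sigma_n$, estimate the Carleson-square mass, pack Blaschke zeros along the boundary curve, build preimage lattices via the implicit function theorem, and so on. All of that \emph{presupposes} both inequalities (a) and (b) of Lemma~\ref{lem:apqsmethod} as black boxes and invokes them. Nowhere do you argue why $a_n(\mathcal C_\varphi)$ is bounded above by the sum $\sup_\Omega|B|^2\zeta(1+2\sigma)+\sup\varepsilon^{-1}\mu_{\varphi,\Omega}(Q(\tau,\varepsilon))$, nor why it is bounded below in terms of $M_{\mathcal H^2}(S)$, the Carleson norm of $\mu_{Z,H^2(\mathbb T^d)}$ and the counting function $N_\Phi$. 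Those two inequalities are the content of the lemma, and they require the model-space/reproducing-kernel machinery of \cite{QS14} (the operator $\mathcal C_\varphi$ restricted to and tested against the model space generated by the Blaschke product $B$ for the upper bound, and testing against reproducing kernels at the chosen interpolation nodes $s_m$, transferred to $H^2(\mathbb T^d)$ via the Bohr lift, for the lower bound).

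In the paper itself, this lemma is not reproved: its proof is the citation to Theorem~4.1 of \cite{QS14}, with a remark about the renormalization used. If you wished to supply a genuine proof, you would need to reproduce (at least in outline) the Queff\'elec--Seip argument: for (a), use that rank-$n$ approximation by compressing to the model space $K_B^\perp$ in $\mathcal H^2$ yields $a_{n+1}(\mathcal C_\varphi)^2\leq \|\mathcal C_\varphi P_{K_B}\|^2$, and split this into the part of $\varphi(\mathbb C_{1/2})$ inside $\Omega$ (controlled by $\sup_\Omega|B|^2$ and the norm of the reproducing kernels, giving the $\zeta(1+2\sigma)$ factor) and the part near the boundary (controlled by the Carleson norm of $\mu_{\varphi,\Omega}$); for (b), note that if $T$ has rank $<n$ and $(s_m)_{m=1}^n$ are the interpolation nodes, then some nonzero vector in the span of $(K_{s_m})$ is annihilated by $T^*$, and testing $\mathcal C_\varphi-T$ against it converts, via the Bohr lift, into the counting-function lower bound weighted by $\|\mu_{Z,H^2(\mathbb T^d)}\|^{-1}_{\mathcal C}$ and $M_{\mathcal H^2}(S)^{-2}$. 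None of this appears in your write-up, so there is a genuine gap: the lemma is assumed, not proved.
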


\medskip \subsection{The Upper Bound} \label{sec:apupperbound} \hspace{1cm} \medskip 

Let $\varphi(s)=\sum_{n=1}^{\infty}c_nn^{-s}\in\mathcal G$ and suppose that $\varphi(\CC_0)$ bounded. By Lemma \ref{lem:QS}, $\mathcal C_\varphi$ is compact if and only if $\mu_\varphi\big(Q(\tau,\veps)\big)=o(\veps)$ uniformly in $\tau\in\RR$. We are planning to get an upper bound of $a_n(\mathcal C_\varphi)$ depending on the behaviour of $\sup_{\tau\in\mathbb R} \mu_\varphi\big(Q(\tau,\veps)\big)$ with respect to $\veps$ and on the size of the image of $\varphi$ near a boundary point. 

Thus, let $\Phi$ be a Bohr lift of $\varphi$. We define $\kappa_{\varphi}$ as the infimum of those $\kappa\geq 1$ such that there exists a constant $C>0$ such that, for every $\tau\in\RR$ and every $\veps>0$, $$ \mathbf{m}_\infty \left(\left\{z \in \mathbb T^\infty \, : \, \Phi(z) \in Q(\tau,\,\varepsilon) \right\} \right)\leq C \varepsilon^{\kappa}.$$ Assume now that there exists a unique $w\in\TT^\infty$ such that $\mathrm{Re}\,\Phi(w)=0$ and write $\Phi(w)=i\tau$. Let $\omega_\varphi$ be the infimum of the positive $\omega$ such that, for any $s\in\CC_0$, $$|\textrm{Im}\,\varphi(s)-\tau|^\omega\leq C\left(\mathrm{Re}\,\varphi(s)-\frac 12\right).$$
\begin{thm}
	\label{thm:apupperbound} Let $\varphi(s)=\sum_{n=1}^{\infty}c_nn^{-s}\in\mathcal G$ with $\varphi(\CC_0)$ bounded, let $\Phi$ be a Bohr lift of $\varphi$ and assume that there is a unique $w\in\TT^\infty$ such that $\mathrm{Re}\,\Phi(w)=0$. Then 
	\[a_n(\mathcal{C}_\varphi) \ll \begin{cases}
		\exp\left(-\lambda n^{-1/2}\right)	& \text{if } \omega_\varphi \leq 1,\\
		\left(\frac{\log{n}}{n}\right)^{(\kappa_\varphi-1)\times\frac{\omega_\varphi}{2(\omega_\varphi-1)}} & \text{if } \omega_\varphi > 1.
	\end{cases}\]
	Here $\lambda$ is some positive constant depending on $\varphi$.
\end{thm}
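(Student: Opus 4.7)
The plan is to apply Lemma \ref{lem:apqsmethod}(a) with a cuspoidal compact set $\Omega_{\sigma_0}$ tailored to the unique boundary image point $i\tau$ and a Blaschke product $B$ with $n-1$ zeros inside $\Omega_{\sigma_0}$, then optimize the parameter $\sigma_0>0$. The hypotheses give two ingredients: for every $\omega>\omega_\varphi$ the image $\Phi(\DD^\infty\cap c_0)$ lies, near $i\tau$, inside the cusp $\{s\in\overline{\CC_0}:|\mathrm{Im}\,s-\tau|^\omega\leq C\,\mathrm{Re}\,s\}$; and for every $\kappa<\kappa_\varphi$ one has the Carleson estimate $\mu_\varphi(Q(\tau',\veps))\ll \veps^{\kappa}$. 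Fix $\omega>\omega_\varphi$, $\kappa<\kappa_\varphi$, a suitably small $\delta>0$ and $\sigma_0\in(0,\delta)$, and set
\[\Omega_{\sigma_0}:=\left\{s\in\overline{\CC_0}:\sigma_0\leq\mathrm{Re}\,s\leq\delta,\ |\mathrm{Im}\,s-\tau|^{\omega}\leq C\,\mathrm{Re}\,s\right\}.\]

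The Carleson piece should be straightforward. Since $\Phi(\DD^\infty\cap c_0)\setminus\Omega_{\sigma_0}$ meets the strip $\{\mathrm{Re}\,s<\delta\}$ only inside $\{\mathrm{Re}\,s<\sigma_0\}$, the measure $\mu_{\varphi,\Omega_{\sigma_0}}$ is effectively supported there. A two-case analysis---handling $\veps<\sigma_0$ directly via the definition of $\kappa_\varphi$, and $\veps\geq\sigma_0$ by tiling $Q(\tau',\veps)\cap\{\mathrm{Re}\,s<\sigma_0\}$ with $\lceil\veps/\sigma_0\rceil$ Carleson squares of side $\sigma_0$ (cf.\ Lemma~\ref{lem:apcarleson1})---gives
\[\sup_{\veps>0,\,\tau'\in\RR}\frac{\mu_{\varphi,\Omega_{\sigma_0}}(Q(\tau',\veps))}{\veps}\ll \sigma_0^{\kappa-1}.\]

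The core of the argument is the Blaschke step. The hyperbolic area of $\Omega_{\sigma_0}$ satisfies
\[A(\sigma_0)\asymp\int_{\sigma_0}^{\delta}\sigma^{1/\omega-2}\,d\sigma,\]
which is bounded if $\omega<1$, of order $\log(1/\sigma_0)$ if $\omega=1$, and of order $\sigma_0^{1/\omega-1}$ if $\omega>1$. One distributes the $n-1$ zeros of $B$ along a hyperbolic net adapted to the elongated geometry of the cusp and establishes, by pseudohyperbolic estimates in the spirit of Queff\'elec--Seip, a bound of the form
\[\sup_{\Omega_{\sigma_0}}|B|^{2}\ll \exp\bigl(-c\,n/A(\sigma_0)\bigr)\]
with an absolute $c>0$ (which one can enlarge by slightly shrinking $\sigma_0$, i.e.\ thickening the net). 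I expect this to be the main technical obstacle: $\Omega_{\sigma_0}$ is long and narrow in Euclidean terms yet of large hyperbolic area, so a generic covering by hyperbolic discs is wasteful, and the net must respect the direction of the central curve $\{\mathrm{Im}\,s=\tau\}$ in order to yield a uniform exponential estimate.

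Finally, combining the two bounds with $\zeta(1+2\sigma_0)\asymp\sigma_0^{-1}$ produces
\[a_n(\mathcal C_\varphi)^2\ll \sigma_0^{-1}\exp\bigl(-c\,n/A(\sigma_0)\bigr)+\sigma_0^{\kappa-1}.\]
For $\omega_\varphi>1$, the choice $\sigma_0\asymp(\log n/n)^{\omega_\varphi/(\omega_\varphi-1)}$ turns $n/A(\sigma_0)$ into $\asymp\log n$, so the first term is only a polynomial factor in $n$ and, for $c$ large enough (obtained by densifying the net), is dominated by the second. The Carleson term then equals $(\log n/n)^{(\kappa-1)\omega_\varphi/(\omega_\varphi-1)}$, and taking square roots while letting $\kappa\to\kappa_\varphi^-$ and $\omega\to\omega_\varphi^+$ gives the claimed rate $(\log n/n)^{(\kappa_\varphi-1)\omega_\varphi/(2(\omega_\varphi-1))}$. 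For $\omega_\varphi\leq 1$ the area $A(\sigma_0)$ is bounded uniformly in $\sigma_0$, so the exponential factor already provides genuine decay in $n$; a sub-polynomial choice of $\sigma_0$ then yields the stated stretched-exponential estimate of the form $\exp(-\lambda\sqrt{n})$.
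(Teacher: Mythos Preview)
Your overall strategy---apply Lemma~\ref{lem:apqsmethod}(a) with a cuspoidal truncation of the image, control the residual Carleson norm by $\sigma_0^{\kappa-1}$, bound a Blaschke product on the cusp, then optimize $\sigma_0$---is exactly the paper's. The one substantive difference is the Blaschke step. The paper does \emph{not} distribute zeros on a two-dimensional hyperbolic net and does not use the hyperbolic area $A(\sigma_0)$; instead it places the $n$ zeros along the \emph{boundary curve} $\Gamma$ of the cusp at equal hyperbolic arc-length and proves (Lemma~\ref{lem:apupperbound1}) the clean one-dimensional estimate $\sup_{\Omega}|B|\le\exp(-Cn/L_p(\Gamma))$ via a telescoping sum and the maximum principle. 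Lemma~\ref{lem:apupperbound2} then computes $L_p(\Gamma_{\omega,\sigma,C})\asymp\sigma^{-(\omega-1)/\omega}$ for $\omega>1$ and $\asymp\log(1/\sigma)$ for $\omega=1$. Since for these cuspoidal domains one checks that $A(\sigma_0)\asymp L_p(\Gamma)$ in every regime, the two routes give the same exponents, but the boundary-curve argument is elementary and completely avoids the ``main technical obstacle'' you flag: no two-dimensional packing, no uniformity of the net near the tip, no edge effects.

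Two small points. First, the way to make the exponential term dominate the power $\sigma_0^{-1}$ is not to ``enlarge $c$'' (that constant is fixed by the geometry) but to introduce a free parameter $\rho$ in $\sigma_0=\rho(\log n/n)^{\omega/(\omega-1)}$ and take $\rho$ large; the paper does precisely this. Second, your closing limit $\kappa\to\kappa_\varphi^-$, $\omega\to\omega_\varphi^+$ is harmless in the applications (where the infima are attained) but, as written, does not literally yield the bound at the endpoint exponents since the implied constants depend on $\kappa,\omega$; the paper simply works at $\omega_\varphi$ and $\kappa_\varphi$ directly.
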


This theorem illustrates the following general principle for composition operators (valid beyond $\mathcal H^2$): The more restricted the image of the symbol is, the more compact the associated composition operator is. In particular, the case $\omega_{\varphi}=1$ (the range of $\varphi$ is contained in an angle) is reminiscent from \cite[Thm.~1.2]{QS15a} where a similar result was obtained for composition operators on $H^2(\DD)$. 

Before we embark upon the proof of Theorem~\ref{thm:apupperbound}, we first employ it to deduce the upper bound of Theorem \ref{thm:approximationnumber}.
\begin{proof}[Final part in the proof of the upper bound of Theorem \ref{thm:approximationnumber}] 
	Suppose that $\varphi \in \mathcal{G}$ is a boundary regular Dirichlet polynomial, and assume that $\mathrm{Re}\,\Phi(\mathbf 1)=0$. We write 
	\begin{eqnarray*}
		\mathrm{Re}\,\Phi\big(e^{i\theta_1},\,\ldots,\,e^{i\theta_d}\big)&=&\ell_1(\theta)^{k_1}+\cdots+\ell_d(\theta)^{k_d}+\sum_{j=1}^do\big(\ell_j^{k_j}(\theta)\big),\\
		\mathrm{Im}\,\Phi\big(e^{i\theta_1},\,\ldots,\,e^{i\theta_d}\big)&=&\tau+b_1\ell_1(\theta)+\cdots+b_d\ell_d(\theta)+o\left(\sum_{j=1}^d |\ell_j(\theta)|\right), 
	\end{eqnarray*}
	with $k_1\geq\ldots\geq k_d$ and $b_1\neq 0$. The proof of Theorem \ref{thm:polynomials} shows that we have ${\kappa_\varphi}\geq 1+\sum_{j=2}^d 1/k_j$.
	
	Now, let us write the Taylor expansion of $\mathrm{Re}\,\Phi$ and $\mathrm{Im}\,\Phi$ near $\mathbf 1$, but also now for a point belonging to the unit polydisc. Writing $$\Phi(z)=\sum_{j=1}^d a_j(1-z_j)+o\left(\sum_{j=1}^d |1-z_j|\right)$$ and $z=\big((1-\rho_1)e^{i\theta_1},\ldots,(1-\rho_d)e^{i\theta_d}\big)$, it is easy to get 
	\begin{eqnarray*}
		\mathrm{Re}\,\Phi(z)&=&a_1\rho_1+\cdots+a_d\rho_d+\ell_1(\theta)^{k_1}+\cdots+\ell_d(\theta)^{k_d}+o\left(\sum_{j=1}^d\left(\rho_j+\ell_j^{k_j}(\theta)\right)\right),\\
		\mathrm{Im}\,\Phi(z)&=&\tau+b_1\ell_1(\theta)+\cdots+b_d\ell_d(\theta)+o\left(\sum_{j=1}^d |\ell_j(\theta)|\right). 
	\end{eqnarray*}
	Recalling that $a_j\geq 0$ for $j=1,\,\ldots,\,d$, it is easy to conclude that there exists a neighbourhood $\mathcal U\ni\mathbf 1$ in $\overline{\DD^d}$ and $C>0$ such that, for all $z\in\mathcal U$, 
	$$|\mathrm{Im}\,\Phi(z)-\tau|^{k_1}\leq C\mathrm{Re}\,\Phi(z).$$ 
	Outside $\mathcal U$, $\mathrm{Re}\,\Phi(z)$ is bounded away from 0, and $|\textrm{Im}\,\Phi(z)-\tau|$ is here trivially majorized. Hence, the upper bound of Theorem \ref{thm:approximationnumber} follows from Theorem \ref{thm:apupperbound}. 
\end{proof}

Let us now turn to the proof of Theorem~\ref{thm:apupperbound}. The proof will be preceded by two lemmas. The first one is inspired by Lemma~3.1 in \cite{QS15a}.
\begin{lem}
	\label{lem:apupperbound1} Let $\Omega$ be a bounded domain in $\mathbb C_0$ whose boundary is a piecewise regular Jordan curve $\Gamma$, with $L_p(\Gamma)\geq 1$. Let $s_1,\,\ldots,\,s_n$ be points in $\Gamma$ such that the hyperbolic length of the curve between any two points $s_j$ and $s_{j+1}$ is equal to $L_p(\Gamma)/n$, $1\leq j\leq n$, where $s_{n+1}=s_1$. Let $B$ be the Blaschke product of degree $n$ whose zeros are precisely $s_1,\,\ldots,\,s_n$. Then, for any $s\in\Omega$, 
	\[|B(s)|\leq \exp\left(-C\frac{n}{L_p(\Gamma)}\right).\]
	\begin{proof}
		By the maximum principle, it is sufficient to prove this inequality for $s\in\Gamma$. In this case, we know that there exists some  $j \in \{1,\,\ldots,\,n\}$ such that $d(s,s_j)\leq L_p(\Gamma)/n$, from which we deduce that 
		\[d(s,s_k)\leq\frac{L_p(\Gamma)}{n}(1+|k-j|)\]
		for any $k=1,\,\ldots,\,n$. Using the link between the pseudo hyperbolic distance and the hyperbolic distance, we deduce that 
		\[|B(s)|\leq \prod_{j=1}^n\left(\frac{1-e^{-j\frac{L_p(\Gamma)}{n}}}{1+e^{-j\frac{L_p(\Gamma)}{n}}}\right).\]
		By a Riemann sum argument, this means that
		\[|B(s)|\leq\exp\left(-n\int_0^1 \ln\left(\frac{1-e^{-xL_p(\Gamma)}}{1+e^{xL_p(\Gamma)}}\right)dx\right) \leq \exp\left(-\frac{n}{L_p(\Gamma)}\int_{e^{-L_p(\Gamma)}}^1 \frac 1y\ln\left(\frac{1+y}{1-y}\right)dy\right),\]
		and we get the desired conclusion, since by assumption $L_p(\Gamma)\geq1$. 
	\end{proof}
\end{lem}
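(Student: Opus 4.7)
The plan is to combine the maximum modulus principle with a direct computation that exploits the hyperbolic spacing of the zeros. Since the zeros $s_1,\ldots,s_n$ of $B$ all lie on $\Gamma = \partial\Omega$, the finite Blaschke product $B$ is holomorphic in a neighbourhood of $\overline{\Omega}$, so by the maximum modulus principle it suffices to bound $|B(s)|$ for $s \in \Gamma$.

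Fix $s \in \Gamma$. Since consecutive zeros are separated by hyperbolic arclength $L_p(\Gamma)/n$ along $\Gamma$, and the hyperbolic distance in $\mathbb{C}_0$ is dominated by arclength along any joining curve, there is an index $j$ with $d(s, s_j) \leq L_p(\Gamma)/n$. Traversing $\Gamma$ cyclically from $s_j$, the triangle inequality then yields
\[
d(s, s_k) \leq (1 + |k-j|)\, \frac{L_p(\Gamma)}{n}
\]
for every $k$, with $|k-j|$ taken cyclically. The identity $\rho(z,w) = (1-e^{-d(z,w)})/(1+e^{-d(z,w)})$ is monotone increasing in $d$, so after relabeling the indices (at worst doubling the exponents, which is absorbed into the final constant) I obtain
\[
|B(s)| \;=\; \prod_{k=1}^{n} \rho(s, s_k) \;\leq\; \prod_{k=1}^{n} \frac{1 - e^{-k L_p(\Gamma)/n}}{1 + e^{-k L_p(\Gamma)/n}}.
\]

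To estimate this product I would take logarithms and compare the resulting sum with an integral. Since the map $x\mapsto \log\bigl((1-e^{-x L_p(\Gamma)})/(1+e^{-x L_p(\Gamma)})\bigr)$ is monotone, the Riemann sum is bounded by $n$ times the corresponding integral, and the substitution $y = e^{-x L_p(\Gamma)}$ converts this into
\[
\log |B(s)| \;\leq\; n \int_0^1 \log \frac{1 - e^{-x L_p(\Gamma)}}{1 + e^{-x L_p(\Gamma)}}\, dx \;=\; -\frac{n}{L_p(\Gamma)} \int_{e^{-L_p(\Gamma)}}^{1} \frac{1}{y} \log \frac{1+y}{1-y}\, dy.
\]

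The final step extracts a constant independent of $L_p(\Gamma)$ using the hypothesis $L_p(\Gamma) \geq 1$. This forces $e^{-L_p(\Gamma)} \leq 1/e$, so the integral above is at least the positive constant $C := \int_{1/e}^{1} y^{-1}\log\frac{1+y}{1-y}\, dy$, yielding $|B(s)| \leq \exp(-Cn/L_p(\Gamma))$. The main obstacles I expect are modest: the cyclic indexing when passing from $|k-j|$-distances to the explicit product, and ensuring the Riemann-sum comparison goes in the correct direction, both of which are handled by the monotonicity of $\log\frac{1-y}{1+y}$.
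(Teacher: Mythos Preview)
Your proof is correct and follows essentially the same approach as the paper's: reduce to the boundary by the maximum principle, use the equal hyperbolic spacing to bound $d(s,s_k)$ by $(1+|k-j|)L_p(\Gamma)/n$, convert to pseudo-hyperbolic factors, and finish with the Riemann-sum/substitution argument together with $L_p(\Gamma)\geq 1$. Your treatment is in fact slightly more explicit than the paper's in naming the constant $C=\int_{1/e}^{1} y^{-1}\log\frac{1+y}{1-y}\,dy$ and in flagging the cyclic-relabeling and monotonicity points.
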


Hence, we require estimates of the hyperbolic length of some curves which are linked to the way that $\varphi$ touches the boundary. Such estimates are contained in the following result.
\begin{lem}
	\label{lem:apupperbound2} Let $\omega\geq 1$, $\sigma\in(0,1/2)$ and $C>1$. Consider $$\Omega_{\omega,\sigma,C}=\big\{s\in\CC_0\,:\, |\mathrm{Im}\,s|^\omega\leq C\mathrm{Re}(s),\ \sigma\leq\mathrm{Re}\,s\leq C\big\}.$$ Let $\Gamma_{\omega,\sigma,C}$ denote the boundary of $\Omega_{\omega,\sigma,C}$. Then \[L_p(\Gamma_{\omega,\sigma,C})\ll_{\omega,C}\begin{cases}
		\left(\frac 1\sigma\right)^{\frac{\omega-1}\omega}&\text{if }\omega>1,\\
		-\ln(\sigma)&\text{if }\omega=1.
	\end{cases}\]
	\begin{proof}
		Consider the curves
		\begin{eqnarray*}
			\Gamma_1&=&\left\{s\in\CC_0;\ \mathrm{Re}\,s=\sigma,\ |\mathrm{Im}\,s|\leq C^{1/\omega}(\mathrm{Re}\, s)^{1/\omega}\right\},\\
			\Gamma_2&=&\left\{s\in\CC_0;\ \mathrm{Re}\,s=C,\ |\mathrm{Im}\,s|\leq C^{1/\omega}(\mathrm{Re}\, s)^{1/\omega}\right\},\\
			\Gamma_3&=&\left\{s\in\CC_0;\ \sigma\leq\mathrm{Re}\, s\leq C,\ |\mathrm{Im}\,s|= C^{1/\omega}(\mathrm{Re}\, s)^{1/\omega}\right\}. 
		\end{eqnarray*}
		Clearly, $\Gamma_{\omega,\sigma,C}\subset\Gamma_1\cup\Gamma_2\cup\Gamma_3$ and it is sufficient to prove the corresponding inequalities for $\Gamma_j$, $j=1,2,3$. Firstly, $L_p(\Gamma_2)\ll_{\omega,C} 1$. Regarding $\Gamma_1$, $$L_p(\Gamma_1)=\int_{-C^{1/\omega}\sigma^{1/\omega}}^{C^{1/\omega}\sigma^{1/\omega}}\frac{dy}\sigma\ll_{\omega,C} \left(\frac1\sigma\right)^{\frac{\omega-1}\omega}$$ which is even a stronger inequality than required when $\omega=1$. Finally, $$L_p(\Gamma_3)\ll_{\omega,C}\int_\sigma^C \frac{\sqrt{1+x^{\frac 2\omega-1}}}xdx\ll_{\omega,C}\begin{cases}
			-\ln(\sigma)&\text{if }\omega=1,\\
			\left(\frac 1\sigma\right)^{\frac{\omega-1}\omega}&\text{if }\omega>1.
		\end{cases}$$ 
		The last estimate follows from inspecting the integrand near $x=0$, since $\sigma \in (0,1)$.
	\end{proof}
\end{lem}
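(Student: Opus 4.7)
The plan is to split the Jordan curve $\Gamma_{\omega,\sigma,C}$ into three pieces and estimate the hyperbolic length of each piece separately, exactly as the integrand $|dz|/\mathrm{Re}\,s$ dictates. Write $s = x+iy$ with $x = \mathrm{Re}\,s \in [\sigma,C]$. The region $\Omega_{\omega,\sigma,C}$ is bounded by the two vertical segments $\Gamma_1 = \{x=\sigma,\, |y|\le C^{1/\omega}\sigma^{1/\omega}\}$ and $\Gamma_2 = \{x=C,\, |y|\le C^{1/\omega}C^{1/\omega}\}$, together with the two curved arcs $\Gamma_3 = \{\sigma\le x\le C,\, y=\pm C^{1/\omega}x^{1/\omega}\}$. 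Since the sets on the right cover $\Gamma_{\omega,\sigma,C}$, it suffices to bound the hyperbolic length of each.

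The right segment $\Gamma_2$ is easy: both the Euclidean length and $1/\mathrm{Re}\,s = 1/C$ are bounded by constants depending only on $\omega$ and $C$, so $L_p(\Gamma_2) \ll_{\omega,C} 1$. For the left segment $\Gamma_1$, $\mathrm{Re}\,s \equiv \sigma$ and the Euclidean length is $2C^{1/\omega}\sigma^{1/\omega}$, giving
\[
L_p(\Gamma_1) = \int_{-C^{1/\omega}\sigma^{1/\omega}}^{C^{1/\omega}\sigma^{1/\omega}}\frac{dy}{\sigma} = 2C^{1/\omega}\sigma^{1/\omega - 1}\ll_{\omega,C} \sigma^{-(\omega-1)/\omega}.
\]
When $\omega=1$ this reduces to a constant, which is of course dominated by $-\ln\sigma$ as $\sigma\to 0$, so the stated bound still holds.

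The curved part $\Gamma_3$ is the only place where the $\omega=1$ and $\omega>1$ regimes differ. Parametrize by $x\in[\sigma,C]$ on one branch; then $|dz|=\sqrt{1+C^{2/\omega}\omega^{-2}x^{2/\omega-2}}\,dx$, and after absorbing constants depending on $\omega$ and $C$,
\[
L_p(\Gamma_3)\ll_{\omega,C}\int_\sigma^C \frac{\sqrt{1+x^{2/\omega-2}}}{x}\,dx.
\]
For $\omega=1$ the square root is simply $\sqrt{2}$, so the integral equals $\sqrt{2}\,(\ln C-\ln\sigma)\ll -\ln\sigma$. For $\omega>1$, the exponent $2/\omega-2$ is negative, so near $x=0$ we have $\sqrt{1+x^{2/\omega-2}}\asymp x^{1/\omega-1}$ and the integrand is dominated by $x^{1/\omega-2}$; antidifferentiating gives an expression of size $\sigma^{1/\omega-1} = \sigma^{-(\omega-1)/\omega}$, and the contribution from $x$ bounded away from $0$ adds only $\mathcal{O}_{\omega,C}(1)$. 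Summing the three estimates yields the stated bound.

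The only mild obstacle is the careful asymptotics of the integrand for $\omega>1$ as $x\to 0^+$, which is handled by the elementary observation $\sqrt{1+x^{2/\omega-2}}\ll x^{1/\omega-1}$ on $(0,1]$; apart from this there are no technical difficulties, and all implicit constants plainly depend only on $\omega$ and $C$.
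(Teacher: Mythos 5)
Your proof is correct and follows the same decomposition and line of argument as the paper: split $\Gamma_{\omega,\sigma,C}$ into the two vertical segments and the curved arcs, bound each hyperbolic length separately, and handle $\Gamma_3$ by the arclength integral with the asymptotics of $\sqrt{1+x^{2/\omega-2}}$ near $0$. One small but worthwhile observation: your exponent $2/\omega-2$ in the $\Gamma_3$ integrand is the correct one, since $\big(\tfrac{d}{dx}C^{1/\omega}x^{1/\omega}\big)^2 \propto x^{2(1/\omega-1)}$; the paper's displayed integrand $\sqrt{1+x^{2/\omega-1}}$ is a typo, and only the corrected exponent produces the stated $\sigma^{-(\omega-1)/\omega}$ asymptotics for $\omega>1$.
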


\begin{proof}[Proof of Theorem \ref{thm:apupperbound}] 
	Let $\sigma\in (0,1)$ and $n\geq 1$. Without loss of generality, we may assume that $\Phi(\mathbf 1)=0$. Keeping the notations of Lemma \ref{lem:apupperbound2}, there exists $C>0$ such that 
	\[\varphi(\CC_0)-\frac 12\subset\big\{s\in\CC_0\,:\, 0\leq \mathrm{Re}\, s\leq \sigma\big\}\cup\Omega_{\omega_\varphi,\sigma,C}.\]
	Let $B$ be a Blaschke product of degree $n$ defined as in Lemma~\ref{lem:apupperbound1} with $\Omega_{\omega_\varphi,\sigma,C}$. Enlarging $C$ if necessary, we may always assume that $L_p(\Gamma_{\omega_\varphi,\sigma,C})\geq 1$, so that the assumptions of Lemma~\ref{lem:apupperbound1} are satisfied. The set 
	\[\Omega=\overline{\left\{\varphi(s)-\frac12 \,:\, \mathrm{Re}\,\varphi(s)\geq\frac12+\sigma\right\}}\]
	is a compact subset of $\CC_0$, and we may apply part (a) of Lemma \ref{lem:apqsmethod}. Since $\Omega\subset\Omega_{\omega_\varphi,\sigma,C}$, we obtain 
	\[\sup_{s\in\Omega}|B(s)|^2\leq \exp\left(-2C'\frac{n}{L_p(\Gamma_{\omega_\varphi,\sigma,C})}\right).\]
	Moreover, $\zeta(1+2\sigma)\ll 1/\sigma$. Finally, using Lemma \ref{lem:apcarleson1}, we obtain
	\[\|\mu_{\varphi,\Omega}\|_{\mathcal C,\mathcal H^2}\ll \sup_{\veps\in(0,\sigma),\ \tau\in\mathbb R}\frac{\mu_{\varphi,\Omega}\big(Q(\tau,\veps)\big)}{\veps}\ll \sigma^{\kappa_\varphi-1}.\]
	We will now optimize the choice of $\sigma$ with respect to $n$. When $\omega_\varphi>1$, we set 
	\[\sigma=\rho\left(\frac{\log n}n\right)^{\frac{\omega_\varphi}{\omega_\varphi-1}},\]
	where $\varrho$ is some numerical parameter to be chosen later. Then 
	\[\sup_{s\in\Omega}|B(s)|^2\zeta(1+2\sigma)\leq \exp\left(-2C'\rho^{\frac{\omega_\varphi-1}{\omega_\varphi}}\log n\right)\,\cdot\,\frac1\sigma\ll \left(\frac{\log n}n\right)^{\frac{\omega_\varphi}{\omega_\varphi-1}(\kappa_\varphi-1)},\] 
	provided $\rho>0$ is sufficiently large. When $\omega_{\varphi}\leq 1$, we set $\sigma=\exp\big(-\rho n^{-1/2}\big)$, so that
	\[\sup_{s\in\Omega}|B(s)|^2\zeta(1+2\sigma)\leq \exp\left(-\frac{C''}\rho n^{1/2}+\rho n^{1/2}\right),\] 
	and the result is proved provided $\rho>0$ is sufficiently small. 
\end{proof}
\begin{rem}
	Our method of proof also shows, provided $\varphi(\CC_0)$ is bounded and $\kappa_\varphi>1$, that 
	\[a_n(\mathcal C_\varphi)\leq \left(\frac{\log n}n\right)^{\frac{\kappa_\varphi-1}2}.\] 
	Indeed, we apply the same method with $\Omega_{\sigma,C}=\{s\in\CC_0\,:\, \sigma\leq \mathrm{Re}\,s\leq C,\ |\mathrm{Im}\,s|\leq C\}$ which satisfies $L_p(\Gamma)\ll_C \sigma^{-1}$. The rest of the proof remains unchanged. 
\end{rem}

\medskip \subsection{The Lower Bound} \label{sec:aplowerbound} \hspace{1cm} \medskip 

Let $\varphi\in\mathcal{G}$ satisfying the assumptions of Theorem~\ref{thm:approximationnumber} and let us assume that around $\mathbf 1$, $\Phi$ satisfies (\ref{eq:apboundaryregular1}) and (\ref{eq:apboundaryregular2}). Let $\nu>0$. For $\delta\in (0,1/\nu)$, we consider the sequence $S_\delta=(s_m)$, given by
\[s_m=\frac 12+\nu\delta+im\delta, \qquad\text{where }\quad 1\leq m\leq \left(\frac 1\delta\right)^{1-\frac 1{k_1}}.\]
We intend to apply part (b) of Lemma \ref{lem:apqsmethod}. We will require the construction of preimages of $S_\delta-1/2$ by $\Phi$, and the inverse function theorem will provide the solution.
\begin{lem}
	\label{lem:aplowerbound1} Let $\varphi \in \mathcal{G}$ satisfy the assumptions of Theorem~\ref{thm:approximationnumber}. Then there exist $\nu_0$, $C_1,C_2>0$ such that for all $\nu\geq \nu_0$ and every $\delta\in (0,1/\nu)$, there exists a finite sequence $Z_\delta=(Z(\alpha))$ in $\DD^d$ with
	\[Z(\alpha)=\big[(1-\rho_1(\alpha))e^{i\theta_1(\alpha)},\ldots,(1-\rho_d(\alpha))e^{i\theta_d(\alpha)}\big]\]
	such that 
	\begin{itemize}
		\item for any $\alpha\neq\beta$, we have $\sup_{j=1,\ldots,d} |\theta_j(\alpha)-\theta_j(\beta)|\geq C_1 \delta$, 
		\item for any $\alpha$ and any $j=1,\,\ldots,\,d$, we have $C_2^{-1}\delta\leq \rho_j(\alpha)\leq C_2\delta$,
		\item $\Phi(Z_\delta)=S_\delta-1/2$ and, for any $1\leq m\leq \left(\frac 1\delta\right)^{1-\frac 1{k_1}}$, the equation $\Phi(Z(\alpha))=s_m-\frac 12$ has at least $\prod_{j=2}^d \left\lfloor \left(\frac 1\delta\right)^{ 1-\frac 1{k_j}}\right\rfloor$ solutions. 
	\end{itemize}
	\begin{proof}
		We start as in the deduction of the upper bound in Theorem \ref{thm:approximationnumber} from Theorem \ref{thm:apupperbound}, writing 
		\begin{eqnarray*}
			\mathrm{Re}\,\Phi(z)&=&a_1\rho_1+\cdots+a_d\rho_d+\ell_1(\theta)^{k_1}+\cdots+\ell_d(\theta)^{k_d}+o\left(\sum_{j=1}^d\left(\rho_j+\ell_j^{k_j}(\theta)\right)\right),\\
			\mathrm{Im}\,\Phi(z)&=&\tau+b_1\ell_1(\theta)+\cdots+b_d\ell_d(\theta)+o\left(\sum_{j=1}^d |\ell_j(\theta)|\right), 
		\end{eqnarray*}
		for $z=\big((1-\rho_1)e^{i\theta_1},\,\ldots,\,(1-\rho_d)e^{i\theta_d}\big)$. To simplify the notations, we use the (linear) change of variables $u_j=\ell_j(\theta)$. We also set 
		\[\Lambda=\NN^d\cap \prod_{j=1}^d \left[1,\left(\frac 1\delta\right)^{1-\frac1{k_j}}\right]\]
		and, for $\alpha\in\Lambda$ and $j=2,\,\ldots,\,d$ we let $\rho_j(\alpha)=\delta$ and $u_j(\alpha)=\alpha_j\delta$. 
		
		Setting $m=\alpha_1$, we want to find $Z(\alpha)$ such that $\mathrm{Re}\,\Phi(Z(\alpha))=\nu\delta$ and $\textrm{Im}\,\Phi(Z(\alpha))=m\delta$. It remains to determine $\rho_1(\alpha)$ and $u_1(\alpha)$. We rewrite this system as 
		\begin{equation}
			\label{eq:aplowerbound1} \left\{ 
			\begin{array}{rcl}
				f_\alpha(\rho_1,u_1)\rho_1+g_{\alpha}(\rho_1,u_1)u_1^{k_1}&=&\nu\delta+d_\alpha \\
				h_\alpha(\rho_1,u_1)u_1&=&m\delta+e_\alpha 
			\end{array}
			\right. 
		\end{equation}
		where $f_\alpha$, $g_\alpha$ and $h_\alpha$ are smooth functions depending only on $\alpha_2,\,\ldots,\,\alpha_d$ and there exists a neighbourhood $\mathcal U\ni (0,0)$ so that for every $(\rho,u) \in \mathcal{U}$, 
		\[|f_\alpha(\rho,u)-a_1|\ll \delta, \qquad |g_\alpha(\rho,u)-1|\ll\delta \qquad \text{and} \qquad |h_\alpha(\rho,u)-1|\ll \delta.\]
		Here, the open set $\mathcal U$ and the involved constants are uniform with respect to $\alpha$, $\nu\geq 1$ and $\delta\in(0,1/\nu)$. Moreover, the real numbers $d_\alpha$ and $e_\alpha$ satisfy 
		\[d_\alpha\ll \sum_{j=2}^d \delta+\sum_{j=2}^d \left(\left(\frac1\delta\right)^{1-\frac1{k_j}}\right)^{k_j}\ll\delta$$ $$e_\alpha\ll \delta\sum_{j=2}^d \left(\frac 1\delta\right)^{1-\frac 1{k_j}}\ll \delta^{\frac1{k_1}}.\] 
		We now apply the inverse function theorem to solve the system \eqref{eq:aplowerbound1}. Provided $\nu$ is large enough, we get a solution $(\rho_1(\alpha),u_1(\alpha))$ satisfying $\sup(\rho_1(\alpha),|u_1(\alpha)|)\ll\delta^{1/k_1}$. In this case, the involved constant depends on $\nu$, but it is uniform with respect to $\alpha$ and $\delta$. 
		
		Now, a look at the first equation of (\ref{eq:aplowerbound1}) shows that we in fact have the more precise inequality $\delta\ll \rho_1(\alpha)\ll \delta$, provided $\nu$ is sufficiently large, and this is independent of $\alpha$ and $\delta\in(0,1)$. Looking now at the second equation of \eqref{eq:aplowerbound1}, if $\alpha\neq \beta\in\Lambda$ satisfy $\alpha_j=\beta_j$ for $j\geq 2$, so that $e_\alpha=e_\beta$ and $h_\alpha=h_\beta$, then $|u_1(\alpha)-u_1(\beta)|\gg \delta$. 
	
		Hence, we have obtained $\prod_{j=2}^d \left\lfloor \left(\frac 1\delta\right)^{ 1-\frac 1{k_j}}\right\rfloor$ solutions to the equation $\Phi(Z(\alpha))=s_m$, and they satisfy the conclusions of Lemma \ref{lem:aplowerbound1} since the inequalities on $u_j(\alpha)$ are also valid for $\theta_j(\alpha)$ up to a constant depending only of $\Phi$. 
	\end{proof}
\end{lem}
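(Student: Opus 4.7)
The plan is to build $Z_\delta$ as an explicit grid in coordinates adapted to the boundary regularity of $\Phi$ at $\mathbf{1}$, with preimages produced by a uniform inverse function theorem. After the standard reductions ($w = \mathbf{1}$ and $\Phi(\mathbf{1}) = 0$), I would expand $\Phi$ at a point $z=((1-\rho_1)e^{i\theta_1},\ldots,(1-\rho_d)e^{i\theta_d}) \in \DD^d$; combining the boundary-regular expansion on $\TT^d$ with the Taylor contribution in the radial directions yields
\begin{align*}
\mathrm{Re}\,\Phi(z) &= \sum_{j=1}^d a_j\rho_j + \sum_{j=1}^d \ell_j(\theta)^{k_j} + o\Big(\sum_{j=1}^d(\rho_j+\ell_j(\theta)^{k_j})\Big),\\
\mathrm{Im}\,\Phi(z) &= \sum_{j=1}^d b_j \ell_j(\theta) + o\Big(\sum_{j=1}^d |\ell_j(\theta)|\Big),
\end{align*}
with $a_j \geq 0$ (Lemma~\ref{lem:nonneglin}) and $b_1 \neq 0$ from the definition of boundary regularity. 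Since the forms $\ell_j$ are independent, the substitution $u_j = \ell_j(\theta)$ is a linear isomorphism, so I may work in the $(\rho,u)$-coordinates.

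Next I would introduce the lattice $\Lambda = \NN^d \cap \prod_{j=1}^d [1,(1/\delta)^{1-1/k_j}]$. For each $\alpha \in \Lambda$, freeze $\rho_j(\alpha) = \delta$ and $u_j(\alpha) = \alpha_j \delta$ for $j \geq 2$, and set $m = \alpha_1$. The equation $\Phi(Z(\alpha)) = s_m - 1/2$ then reduces to a $2 \times 2$ system
\begin{equation*}
f_\alpha(\rho_1,u_1)\rho_1 + g_\alpha(\rho_1,u_1)u_1^{k_1} = \nu\delta + d_\alpha,\qquad h_\alpha(\rho_1,u_1)u_1 = m\delta + e_\alpha,
\end{equation*}
with $f_\alpha \to a_1$, $g_\alpha \to 1$, $h_\alpha \to b_1$ uniformly in $\alpha$, and error terms $|d_\alpha| \ll \delta$, $|e_\alpha| \ll \delta^{1/k_1}$ coming from the frozen coordinates; the exponents $1-1/k_j$ defining $\Lambda$ are chosen precisely so that each term $(\alpha_j \delta)^{k_j}$ contributes $\mathcal{O}(\delta)$.

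The core technical step, which I expect to be the main obstacle, is to solve this system uniformly in $(\alpha,\delta)$. The linearization of the left-hand side at $(0,0)$ is block-diagonal with entries $a_1$ and $b_1$, both nonzero, so a quantitative inverse function theorem should yield a unique solution $(\rho_1(\alpha), u_1(\alpha))$ of size $\mathcal{O}(\delta^{1/k_1})$, provided $\nu$ is chosen large enough for $\nu\delta$ to dominate $d_\alpha$. Inserting this solution back into the first equation forces $\rho_1(\alpha) \asymp \delta$, since $u_1(\alpha)^{k_1} = \mathcal{O}(\delta)$ while the right-hand side is comparable to $\nu\delta$. The delicate point is that all constants must be independent of both $\alpha \in \Lambda$ and $\delta \in (0,1/\nu)$, which requires careful control of the neighbourhood of solvability in the quantitative version of the inverse function theorem.

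Finally, the separation properties fall out by inspecting two cases for $\alpha \neq \beta$ in $\Lambda$: if $\alpha_j \neq \beta_j$ for some $j \geq 2$, then $|u_j(\alpha)-u_j(\beta)| \geq \delta$ by construction; if instead $\alpha_j = \beta_j$ for all $j \geq 2$ but $\alpha_1 \neq \beta_1$, then $e_\alpha = e_\beta$ and $h_\alpha = h_\beta$, and the second equation of the system forces $|u_1(\alpha) - u_1(\beta)| \gg \delta$. Transferring back through the linear isomorphism $u \leftrightarrow \theta$ preserves these lower bounds up to a constant, giving $\sup_j |\theta_j(\alpha)-\theta_j(\beta)| \geq C_1 \delta$. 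The claimed count of at least $\prod_{j=2}^d \lfloor (1/\delta)^{1-1/k_j}\rfloor$ preimages of each $s_m$ is then just the cardinality of the fiber $\{\alpha \in \Lambda : \alpha_1 = m\}$.
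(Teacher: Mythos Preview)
Your proposal is correct and follows essentially the same route as the paper's proof: the same radial-angular expansion of $\Phi$ near $\mathbf 1$, the same lattice $\Lambda$, the same freezing of $\rho_j=\delta$ and $u_j=\alpha_j\delta$ for $j\ge 2$, the same reduction to a $2\times 2$ system in $(\rho_1,u_1)$ solved by a uniform inverse function theorem, and the same two-case separation argument. The only cosmetic difference is that the paper writes $|h_\alpha-1|\ll\delta$ (implicitly normalizing $b_1=1$) where you write $h_\alpha\to b_1$, which is harmless since $b_1\neq 0$.
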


\begin{proof}[Final part in the proof of the lower bound of Theorem \ref{thm:approximationnumber}]
	We apply Lemma \ref{lem:apqsmethod} to $S_\delta$ and $Z_\delta$ given by the previous lemma, for 
	\[\delta = \left(\frac{1}{n}\right)^{\frac{k_1}{k_1-1}},\]
	so that $S_\delta$ has cardinal number equal to $n$. Since $M_{\mathcal H^2}(S_\delta)\ll 1$ and $\left\|\mu_{Z,H^2(\TT^d)}\right\|_{\mathcal C,H^2(\TT^d)}\ll 1$ by Lemma~\ref{lem:apinterpolation1} and Lemma~\ref{lem:apinterpolation2}, it remains to estimate the sum $N_\Phi(s_m;Z)\zeta(2\mathrm{Re}\,s_m)$ for any $m$. Using the fact that $\rho_j(\alpha)\gg\delta$ for any $j=1,\,\ldots,\,d$ and any $\alpha$, we obtain
	\[N_\Phi(s_m;Z)\zeta(2\mathrm{Re}\,s_m)\gg\left(\frac 1\delta\right)^{\sum_{j=2}^d\left(1-\frac 1{k_j}\right)}\,\cdot\, \delta^{d}\,\cdot\, \delta^{-1} \gg \delta^{\sum_{j=2}^d \frac 1{k_j}} \gg \left(\frac 1n\right)^{\left(\sum_{j=2}^d \frac1{k_j}\right)\times\frac{k_1}{k_1-1}},\]
	and we are done.
\end{proof}

\begin{rem}
	We may modify the proof of Theorem \ref{thm:approximationnumber} so that we do not assume that there exists a unique $w\in\TT^d$ such that $\mathrm{Re}\,\Phi(w)=0$. Suppose that $\varphi$ is boundary regular at any $w\in\TT^d$ such that $\mathrm{Re}\,\Phi(w)=0$. Define now the \emph{compactness index} of $\varphi$ as the real number $$\eta_\varphi(s)=\inf\big\{\eta_{\varphi,w}\,:\,\mathrm{Re}\, \Phi(w)=0\big\}.$$ It should be observed that this infimum is in fact a minimum. Indeed, our assumptions imply that the points $w\in\TT^d$ such that $\mathrm{Re}\,\Phi(w)=0$ are isolated. Theorem \ref{thm:approximationnumber} remains true with this new definition of $\eta_\varphi$. 
\end{rem}

\bibliographystyle{amsplain} 
\bibliography{nonlin}

\end{document}